\newtheorem{theorem}{Theorem}[section]
\newtheorem*{theorem*}{Theorem}
\newtheorem*{proposition*}{Proposition}
\newtheorem*{corollary*}{Corollary}
\newtheorem{lemma}[theorem]{Lemma}
\newtheorem{proposition}[theorem]{Proposition}
\newtheorem{corollary}[theorem]{Corollary}
\theoremstyle{definition}
\newtheorem{example}[theorem]{Example}
\theoremstyle{remark}
\newtheorem{remark}[theorem]{Remark}
\numberwithin{equation}{section}
\title[Maximal subgroups and irreducible representations]{Maximal
  subgroups and irreducible representations of generalised multi-edge
  spinal groups}
\author[B. Klopsch]{Benjamin Klopsch} \address{Benjamin Klopsch:
  Mathematisches Institut, Heinrich-Heine-Universit\"at, 40225
  D\"usseldorf, Germany} \email{klopsch@math.uni-duesseldorf.de}
\author[A. Thillaisundaram]{Anitha Thillaisundaram} 
\address{Anitha Thillaisundaram: School of Mathematics and Physics, University of Lincoln,
Lincoln LN6 7TS, UK}
\email{anitha.t@cantab.net}
\keywords{Tree automorphisms, branch groups, maximal subgroups,
  irreducible representations}
\subjclass[2010]{Primary 20E08;  Secondary 20E28, 20C07}
\begin{document}

\begin{abstract}
  Let $p \geq 3$ be a prime.  A generalised multi-edge spinal group
  \[
  G = \langle \{a \} \cup \{ b^{(j)}_i \mid 1 \leq j \leq p, \, 1 \leq
  i \leq r_j \} \rangle \leq \mathrm{Aut}(T)
  \]
  is a subgroup of the automorphism group of a regular $p$-adic rooted
  tree~$T$ that is generated by one rooted automorphism $a$ and $p$
  families $b^{(j)}_1, \ldots, b^{(j)}_{r_j}$ of directed
  automorphisms, each family sharing a common directed path disjoint
  from the paths of the other families.

  This notion generalises the concepts of multi-edge spinal groups,
  including the widely studied GGS-groups, and extended Gupta--Sidki
  groups that were introduced by Pervova.  Extending techniques that
  were developed in these more special cases, we prove: generalised
  multi-edge spinal groups that are torsion have no maximal subgroups
  of infinite index.  Furthermore we use tree enveloping algebras,
  which were introduced by Sidki and Bartholdi, to show that certain
  generalised multi-edge spinal groups admit faithful infinite
  dimensional irreducible representations over the prime
  field~$\mathbb{Z}/p\mathbb{Z}$.
\end{abstract}

\maketitle


\section{Introduction} \label{sec:intro}

Throughout let $p$ be an odd prime, and let $T$ denote a regular
$p$-adic rooted tree.  Pioneering constructions of Grigorchuk, Gupta
and Sidki in the 1980s led to the first examples of subgroups of the
automorphism group $\mathrm{Aut}(T)$ that are now called GGS-groups.
Since then the profinite group $\mathrm{Aut}(T)$ has become a
`building site' for finitely generated, residually finite groups with
interesting properties, in particular branch groups.  Typically the
groups are realised as subgroups of $\mathrm{Aut}(T)$ that are
generated by tree automorphisms with built-in self-similarities;
see~\cite{BarthGrigSunik,NewHorizons}.

In this paper we consider a collection $\mathscr{C}$ of subgroups
of~$\mathrm{Aut}(T)$, called \emph{generalised multi-edge spinal
  groups}, that form a common generalisation of multi-edge spinal
groups, studied in~\cite{Paper}, and extended Gupta--Sidki groups,
introduced in~\cite{Pervova1}.  Note that the class of multi-edge
spinal groups includes all GGS-groups.  With a certain amount of care
we extend the main results in~\cite{Paper,Pervova1,Sidki1, Vieira} to groups
in the larger class~$\mathscr{C}$.  The extended Gupta--Sidki groups,
originally manufactured by Pervova as examples of just infinite branch
groups without the congruence subgroup property, seem to have received
little attention beyond~\cite{Pervova1}.  It is reassuring that
results first obtained for multi-edge spinal groups carry over to
these groups.

For convenience, we give now an abridged definition of the class
$\mathscr{C}$ of generalised multi-edge spinal groups and illustrate
the concept with simple examples.  A more detailed discussion and the
definitions of standard terms can be found in
Section~\ref{sec:prelim}.

A generalised multi-edge spinal group
\begin{equation} \label{equ:def-gen-mult-ed-sp-gp} G = \big\langle \{a
  \} \cup \{ b^{(j)}_i \mid 1 \leq j \leq p, \, 1 \leq i \leq r_j \}
  \big\rangle
\end{equation}
is an infinite subgroup of (a Sylow-pro-$p$ subgroup of) the profinite
group $\mathrm{Aut}(T)$ that is generated by
\begin{itemize}
\item a rooted automorphism $a$ of order $p$ permuting cyclically the
  vertices $u_1,\ldots,u_p$ at the $1$st level of~$T$, and
\item families $\mathbf{b}^{(j)} = \{b^{(j)}_1, \ldots,
  b^{(j)}_{r_j}\}$, $j \in \{1,\ldots,p\}$, of directed automorphisms
  sharing a common directed path $P_j$ in~$T$.
\end{itemize}
The paths $P_1, \ldots, P_p$ are required to be mutually disjoint.
Without loss of generality we can demand that none of the generators
are superfluous, hence $0 \leq r_j \leq p-1$ for all $j \in
\{1,\ldots,p\}$.  Since $G$ is infinite, there is at least one $j \in
\{1,\ldots,p\}$ such that $r_j \not = 0$.

By construction such a generalised multi-edge spinal group is a
finitely generated, residually-(finite $p$) infinite group.  Regarded
as a subgroup of $\mathrm{Aut}(T)$ it is fractal, and under additional
assumptions, as we will see below, it is just infinite and branch.

\begin{example} \label{exa:extended-G-S}
  The \emph{extended Gupta--Sidki group}, or EGS-group for short, with
  \emph{defining vector} $\mathbf{e} = (e_1,\ldots,e_{p-1}) \in
  (\mathbb{Z}/p\mathbb{Z})^{p-1} \smallsetminus \{ \mathbf{0} \}$ is
  the group
  \[
  G = \langle a,b,c \rangle \leq \mathrm{Aut}(T),
  \]
  where the rooted automorphism $a$ permutes cyclically the $p$
  vertices at the $1$st level of~$T$, whereas the two directed
  automorphisms $b,c$ belong to the $1$st level stabiliser
  $\mathrm{Stab}_G(1)$ and satisfy the recursion relations
  \[
  b=(a^{e_1},\ldots,a^{e_{p-1}},b) \quad \text{and} \quad
  c=(c,a^{e_1},\ldots,a^{e_{p-1}}).
  \]
  Observe that each of the generators $a,b,c$ has order~$p$.
  In~\cite{Pervova1}, Pervova imposes two additional requirements:
  $\sum_{i=1}^{p-1} e_i = 0$ and $\mathbf{e}$ is
  non-symmetric, i.e., $e_i \ne e_{p-i}$ for some
  $i \in \{1,\ldots,p-1\}$.  The first condition is equivalent to $G$
  being torsion; see~\cite[Theorem~2]{Rozhkov} and
  \cite[Theorem~1]{Vovkivsky}.  Pervova shows under these assumptions:
  $G$ is just infinite and branch, but does not have the congruence
  subgroup property; see~\cite{Pervova1}. This is in
  contrast to the Grigorchuk group~\cite[Theorem~3.1]{MPI} and branch
  GGS-groups~\cite[Theorem~A]{GustavoAlejandraJone}.
\end{example}

\begin{example} \label{exa:multi-edge} Given $r \in \{1,\ldots,p-1\}$
  and a finite $r$-tuple $\mathbf{E}$ of
  $(\mathbb{Z}/p\mathbb{Z})$-linearly independent vectors
  \[
  \mathbf{e}_i = ( e_{i,1}, e_{i,2},\ldots , e_{i,p-1} ) \in
  (\mathbb{Z}/p\mathbb{Z})^{p-1}, \quad i\in \{1,\ldots,r \},
  \]
  we recursively define directed automorphisms $b_1, \ldots, b_r$ via
  \[
  b_i=(a^{e_{i,1}}, a^{e_{i,2}},\ldots,a^{e_{i,p-1}},b_i), \quad i\in
  \{1,\ldots,r \}.
  \]
  The group
  $G = \langle a, b_1, \ldots, b_r \rangle \leq \mathrm{Aut}(T)$ is
  the \emph{multi-edge spinal group} associated to the defining
  vectors $\mathbf{E}$.  We observe that $\langle a \rangle \cong C_p$
  and $\langle b_1, \ldots, b_r \rangle \cong C_p^{\, r}$ are
  elementary abelian $p$-groups.  These groups generalise GGS-groups,
  which correspond to the special case $r=1$.  In \cite{Paper} it was
  seen that the torsion multi-edge spinal groups are just infinite and
  branch.  Moreover, generalising results of Pervova on GGS-groups, it
  was shown there that torsion multi-edge spinal groups do not contain
  maximal subgroups of infinite index.  Equivalently, these groups do
  not contain proper dense subgroups with respect to the profinite
  topology.
\end{example}

 \begin{remark}\label{genGS} We will have occasion to look at
   \emph{generalised Gupta--Sidki groups}.  By this we mean GGS-groups
   $G = \langle a, b_1 \rangle$, i.e.\ $r=1$ in the notation above,
   with the extra property that
   $\{e_{1,1}, \ldots, e_{1,p-1}\} = \{1,2,\ldots,p-1\}$. This
   definition subsumes \emph{the} generalised Gupta--Sidki group
   $\langle a,b\rangle$, studied in \cite{Vieira}, where
   $b=(a,a^2,\ldots,a^{p-1},b)$.  For $p=3$ this group is the
   Gupta--Sidki  $3$-group.
\end{remark}

We extend the results illustrated by the two examples as follows.

\begin{theorem} \label{thm:main-result} Let $G$ be a generalised
  multi-edge spinal group, that is a group in the class~$\mathscr{C}$
  as described in \eqref{equ:def-gen-mult-ed-sp-gp}.
  \begin{enumerate}
  \item[(1)] If every non-empty family $\mathbf{b}^{(j)}$,
      $j \in \{1, \ldots, p\}$, features at least one non-constant
      defining vector, then $G$ is regular branch
      over $\gamma_3(G)$.

    Consequently, if $G$ is torsion then $G$ is just infinite and branch.
  \item[(2)] If the group $G$ is torsion then $G$ does not contain any
    proper dense subgroups, with respect to the profinite
    topology. The same holds for groups commensurable to $G$.
  \item[(3)] Suppose that the families $\mathbf{b}^{(1)}, \ldots,
    \mathbf{b}^{(p)}$ of directed generators of $G$ satisfy the
    additional conditions:
  \begin{enumerate}
  \item[(i)] every non-empty family $\mathbf{b}^{(j)}$,
    $j \in \{1, \ldots, p\}$, features at least one non-symmetric
    defining vector;
  \item[(ii)] there are at least two directed automorphisms, from two
    distinct families, that have the same defining vector.
  \end{enumerate}
  Then $G$ does not have the congruence subgroup property.
\end{enumerate}
\end{theorem}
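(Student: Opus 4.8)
Throughout I write $\psi \colon \mathrm{Stab}_G(1) \hookrightarrow G \times \cdots \times G$ ($p$ factors) for the coordinate embedding afforded by the fractal action, and I recall that $G = \langle a \rangle \, \mathrm{Stab}_G(1)$ with $[G : \mathrm{Stab}_G(1)] = p$. For part~(1) the plan is to follow the commutator calculus used for multi-edge spinal groups in \cite{Paper}, adapted to several families sitting on mutually disjoint directed paths. First I would note that $G$ is generated by elements of order~$p$, so $G/\gamma_2(G)$ is a finite elementary abelian $p$-group, and then check by a direct computation in a finite congruence quotient $G/\mathrm{Stab}_G(k)$ that $\gamma_2(G)/\gamma_3(G)$ is finite; hence $\gamma_3(G)$ has finite index. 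The heart of the matter is the finite-index containment $\gamma_3(G) \times \cdots \times \gamma_3(G) \leq \psi(\mathrm{Stab}_{\gamma_3(G)}(1))$. To produce elements $\psi^{-1}(g,1,\ldots,1)$ with $g \in \gamma_3(G)$, I would evaluate $\psi$ on iterated commutators such as $[[a, b^{(j)}_i], b^{(j)}_{i'}]$ and on suitable conjugates: the image of $[a, b^{(j)}_i]$ has coordinate entries given by consecutive differences of the defining vector, so the hypothesis that each non-empty family carries a \emph{non-constant} defining vector is exactly what forces these entries not to vanish, and a further commutator then deposits a generator of $\gamma_3(G)$ in a single factor. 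Disjointness of $P_1,\ldots,P_p$ lets me treat the families one at a time, reducing the computation coordinate by coordinate to the single-path case; I expect the bookkeeping that keeps the non-constancy hypothesis aligned with the coordinate to be filled to be the main obstacle. Branchness is then immediate, while just infiniteness (under the torsion hypothesis) follows from the standard criterion for regular branch groups, the torsion being used to control the finite quotients exactly as in \cite{Paper}.

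For part~(2) I would first reduce to maximal subgroups: a proper dense subgroup lies in a maximal subgroup, and a maximal subgroup has finite index if and only if it is not dense, since finite-index subgroups are closed and a closed proper maximal subgroup cannot be dense. So it suffices to show that every maximal subgroup $M < G$ has index $p$. Suppose $M$ has infinite index. Because part~(1) makes $G$ just infinite, the normal core of $M$ is trivial, so $M$ contains no nontrivial normal subgroup. I would then run Pervova's inductive descent in the form used for multi-edge groups in \cite{Paper}: using $G = \langle a \rangle \, \mathrm{Stab}_G(1)$ and fractalness, density of $M$ forces the coordinate projections of $M \cap \mathrm{Stab}_G(1)$ to be dense in $G$ again, and from $M$ one manufactures a maximal subgroup at the next level down together with a properly smaller slice of $G$, contradicting a length count. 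The torsion hypothesis enters decisively, guaranteeing that the relevant cyclic subgroups are finite so that the descent terminates; I expect this descent, which must be carried out uniformly over all families and paths, to be the most delicate point of the whole theorem. The statement for commensurable groups then follows because ``every maximal subgroup has finite index'' passes to and from a common finite-index subgroup.

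For part~(3) I would construct, following Pervova \cite{Pervova1}, an explicit normal subgroup $N \trianglelefteq G$ of finite index that contains no level stabiliser $\mathrm{Stab}_G(n)$. Condition~(ii) supplies directed generators $b^{(j)}_i$ and $b^{(j')}_{i'}$ from distinct families $j \neq j'$ sharing a defining vector~$\mathbf{e}$; since $P_j$ and $P_{j'}$ are disjoint, the image $\psi\bigl(b^{(j)}_i (b^{(j')}_{i'})^{-1}\bigr)$ has the special shape that lets me compare the two relevant coordinates, and I would take $N$ to be the normal closure of a subgroup assembled from such comparisons. I would verify $[G:N] < \infty$ by showing $N$ contains a finite-index portion of $\gamma_3(G)$ via the regular branch structure of part~(1), and verify $\mathrm{Stab}_G(n) \not\subseteq N$ for every~$n$ by a self-similar argument showing that the comparison ``defect'' reproduces itself at arbitrary depth. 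Condition~(i) is used to ensure, through the non-symmetry of at least one defining vector in each family, that the map detecting the defect is nonzero, so that $N$ is proper and genuinely non-congruence. The hard part here is precisely the persistence of the defect at every level, where the disjointness of the paths and both hypotheses must be combined with care.
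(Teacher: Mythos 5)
Your overall architecture matches the paper's (regular branch over $\gamma_3(G)$ via commutator calculus; Pervova-style descent through coordinate projections for maximal subgroups; a self-reproducing element witnessing failure of the congruence subgroup property), but there is a genuine gap in your plan for part~(1). You propose to handle only iterated commutators of the form $[[a,b^{(j)}_i],b^{(j)}_{i'}]$ with both directed generators drawn from the \emph{same} family, and you assert that disjointness of the paths $P_1,\ldots,P_p$ lets you ``treat the families one at a time, reducing to the single-path case.'' That reduction is false: $\gamma_3(G)$ is normally generated by commutators $[g_1,g_2,g_3]$ in which the $g_i$ range over the \emph{whole} generating set, so the cross-family normal generators $[a,c_k,c_l]$, $[c_k,c_l,a]$, $[c_k,c_l,c_m]$ with $c_k\in\mathbf{b}^{(k)}$, $c_l\in\mathbf{b}^{(l)}$, $k\ne l$, must be dealt with, and they do not lie in any $\gamma_3(\langle a,\mathbf{b}^{(j)}\rangle)$. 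The last two types are easy (conjugate each $c_j$ by $a^j$ to align all the directed parts in one coordinate), but $([a,c_k,c_l],1,\ldots,1)$ is the crux: since $\psi_1([a,c_k])$ already has nontrivial $a$-power entries spread across several coordinates, one must manufacture an element of $\mathrm{Stab}_G(1)$ whose first coordinate is $x\,c_l$ for some $x$ in the $k$-th multi-edge spinal subgroup and whose remaining relevant coordinates are controlled. This requires first normalising the defining vectors of the family $\mathbf{b}^{(k)}$ (the row-echelon reduction of \cite[Lemma~3.4]{Paper}) and then a case analysis on $r_k$ and $p$; it is exactly here, not in the single-family computation, that the non-constancy hypothesis does its work. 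Your sketch as written would only prove $\gamma_3(G_j)\times\cdots\times\gamma_3(G_j)\subseteq\psi_1(\gamma_3(\mathrm{Stab}_G(1)))$ for each $j$, which is weaker than what is needed.

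A secondary point on part~(3): you have the role of condition~(i) wrong. In the actual argument the finite-index non-congruence subgroup can simply be taken to be $G'$ (properness is automatic from the abelianisation $G/G'\cong C_p^{\,1+r_1+\cdots+r_p}$, and the ``defect'' is just $c^{-1}b\not\equiv 1\pmod{G'}$ for two directed generators $b,c$ from distinct families sharing a defining vector). Condition~(i) is not what makes the defect nonzero; it is what guarantees, via the non-symmetric vectors, that $G$ is regular branch over $G'$ (i.e.\ $G'\times\cdots\times G'\subseteq\psi_1(G')$), and this branching over $G'$ — not merely over $\gamma_3(G)$ — is what powers the recursive construction of elements $t_n\in bG'\cap c\,\mathrm{Stab}_G(n)$ persisting at every level. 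Your part~(2) is an accurate high-level paraphrase of the paper's descent and needs only the quantitative length-reduction machinery (the $\Theta_1,\Theta_2$ maps) filled in.
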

   
For comparison, we remark that Bondarenko~\cite{Bondarenko} has shown
that there exist finitely generated branch groups that possess maximal
subgroups of infinite index. Bou-Rabee, Leemann and Nagnibeda
\cite{arxiv} proceeded to investigate weakly maximal subgroups of
branch groups, that is those that are maximal among the subgroups of
infinite index. The Grigorchuk group, the Gupta--Sidki group and many
other GGS-groups contain uncountably many non-parabolic weakly maximal
subgroups. In Corollary~\ref{weakly} we observe that these results
also apply to the branch groups in $\mathscr{C}$.

Returning to the results about maximal subgroups, as explained
in~\cite{Paper, Pervova4} they relate to a conjecture of
Passman~\cite[Conjecture~6.1]{Conjecture} on the group algebra $F[G]$
of a finitely generated group $G$ over a field $F$ of
characteristic~$p$.  The conjecture states that, if the Jacobson
radical $\mathrm{Jac}(F[G])$ coincides with the augmentation ideal
$\mathrm{Aug}(F[G])$, then $G$ is a finite $p$-group.
In \cite{Conjecture}, Passman proved that if
$\mathrm{Jac}(F[G]) = \mathrm{Aug}(F[G])$ then $G$ is a $p$-group and
every maximal subgroup of $G$ is normal of index~$p$.

The torsion groups in $\mathscr{C}$, having the property that maximal
subgroups are normal of index $p$, form a natural supply of potential
counter-examples to Passman's conjecture.  However, prominent examples
such as the Grigorchuk group and the generalised Gupta--Sidki
$p$-groups, do not satisfy the prerequisite
$\mathrm{Jac}(F[G]) = \mathrm{Aug}(F[G])$; see~\cite{Bartholdi,
  erratum, Sidki1, Vieira}.  In Section~\ref{sec:irr-reps}, we prove
similar results for a larger class of groups in $\mathscr{C}$.

The following question of
  Bergman~\cite[Problem~17.17]{Kourovka} was brought to our attention
  by A.~Abdollahi: do there exist finitely generated infinite groups
  with finitely many maximal subgroups.  As recorded in the Kourovka
  Notebook, an example of a $2$-generated $2$-group with $3$ maximal
  subgroups can be extracted from \cite[Sec. 7]{ErshovJaikin}.  Due to
  Theorem \ref{thm:main-result}(2) we obtain a large and more easily
  describable collection of finitely generated infinite groups with
  finitely many maximal subgroups.

Now a subgroup $G \leq \mathrm{Aut}(T)$ naturally acts on the boundary
$\partial T$ of the tree~$T$.  The tree enveloping algebra
$\mathfrak{A}_G$ of $G$ over the prime field
$F = \mathbb{Z}/p\mathbb{Z}$ is the image of $F[G]$ in the
endomorphism algebra
$\mathrm{End}(F \langle\!\langle \partial T \rangle\!\rangle)$ of the
$F$-vector space on the basis $\partial T$.  The image of
$\mathrm{Aug}(F[G])$ in $\mathfrak{A}_G$ is denoted by
$\mathrm{Aug}(\mathfrak{A}_G)$.  Bartholdi~\cite{Bartholdi, erratum}
extensively studied the tree enveloping algebra of the Grigorchuk
group, over various fields.  One of his key results is
that the tree enveloping algebra of the Grigorchuk group over a field
of characteristic~$2$ has a natural grading
\cite[Corollary~4.16]{Bartholdi}. This can be concluded from the
recursive presentation of the tree enveloping algebra
\cite[Theorem~4.15]{Bartholdi}. It remains open, as to whether the
tree enveloping algebra of the Gupta--Sidki groups, or of other
GGS-groups, similarly admit a natural grading. We see from the
following that it is desirable to close this gap in our knowledge.

\begin{theorem} \label{primitive} Let $G=\langle a, \mathbf{b}^{(1)}, \ldots, \mathbf{b}^{(p)} \rangle$ be a just infinite group
  in $\mathscr{C}$, and let $\mathfrak{A}_G$ be its tree enveloping
  algebra over $F = \mathbb{Z}/p\mathbb{Z}$. 

  If, either $G$ contains a generalised Gupta--Sidki group as a
  subgroup, or the induced augmentation ideal
  $\mathrm{Aug}(\mathfrak{A}_G)$ is a graded algebra with
    the elements $a-1$ and $b_i^{(j)}-1$ for $1\le j\le p$,
    $\, 1\le i\le r_j$ being homogeneous, then $G$ admits a faithful
  infinite-dimensional irreducible $F$-representation.
\end{theorem}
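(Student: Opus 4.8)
The plan is to reduce the statement to the construction of a single infinite-dimensional simple $\mathfrak{A}_G$-module, and then to build such a module along the lines of Sidki's primitivity argument for the Gupta--Sidki group, with the two hypotheses supplying exactly the input that makes the construction go through.

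First I would observe that \emph{faithfulness is automatic once the module is infinite-dimensional}. Suppose $V$ is any infinite-dimensional simple $\mathfrak{A}_G$-module; regarding $V$ as an $F[G]$-module via the surjection $F[G] \twoheadrightarrow \mathfrak{A}_G$, set $K = \ker(G \to \mathrm{GL}(V))$. Then $K \trianglelefteq G$, and since $G$ is just infinite either $K = 1$ or $[G:K] < \infty$. In the latter case the $G$-action on $V$ factors through the finite group $G/K$, so the image of $\mathfrak{A}_G$ in $\mathrm{End}_F(V)$ is a quotient of the finite-dimensional algebra $F[G/K]$; being simple over a finite-dimensional algebra, $V$ would then be finite-dimensional, a contradiction. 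Hence $K = 1$ and $V$ affords a faithful representation of~$G$. Thus it suffices to produce one infinite-dimensional simple $\mathfrak{A}_G$-module.

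To produce it I would follow the template of \cite{Sidki1}. The algebra $\mathfrak{A}_G$ acts faithfully on the infinite-dimensional space $F\langle\!\langle \partial T\rangle\!\rangle$ and carries a matrix recursion $\psi\colon \mathfrak{A}_G \to \mathrm{M}_p(\mathfrak{A}_G)$ induced by the action on the $p$ subtrees below the root. Fixing a boundary ray $\xi$ adapted to the spines $P_1,\dots,P_p$, I would consider the cyclic submodule of $F\langle\!\langle \partial T\rangle\!\rangle$ generated by a vector supported near $\xi$ (equivalently, a left ideal of infinite codimension), and pass via Zorn's lemma to a maximal proper submodule so that the quotient is simple. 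Irreducibility would be verified recursively through $\psi$, using that $a$ realises the cyclic permutation of the subtrees while the directed generators act along the spines, so that $\mathfrak{A}_G$ moves the relevant cylinder vectors transitively. The delicate point is that the simple quotient must be infinite-dimensional, and this is where the hypotheses enter: it comes down to ruling out collapse to a finite-dimensional module, which amounts to showing that $\mathrm{Aug}(\mathfrak{A}_G)$ is not nil.

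This last step is the main obstacle, and it is precisely what the two hypotheses are designed to settle. If $G$ contains a generalised Gupta--Sidki group $H$, then $\mathfrak{A}_H \subseteq \mathfrak{A}_G$ and $\mathrm{Aug}(\mathfrak{A}_H) \subseteq \mathrm{Aug}(\mathfrak{A}_G)$; since nilpotency of an element is intrinsic, a non-nilpotent element of $\mathrm{Aug}(\mathfrak{A}_H)$ furnished by the analysis in \cite{Sidki1, Vieira} remains non-nilpotent in $\mathrm{Aug}(\mathfrak{A}_G)$, and I would transport Sidki's explicit irreducible module along this inclusion. If instead $\mathrm{Aug}(\mathfrak{A}_G)$ is graded with $a-1$ and the $b_i^{(j)}-1$ homogeneous of positive degree, then $\mathfrak{A}_G = F \oplus \mathrm{Aug}(\mathfrak{A}_G)$ is connected graded and generated in bounded positive degree; as it is infinite-dimensional, every power $\mathrm{Aug}(\mathfrak{A}_G)^d$ is non-zero, so the augmentation ideal is not nilpotent, and I would use the recursive presentation afforded by $\psi$, in the spirit of Bartholdi's grading of the Grigorchuk algebra \cite{Bartholdi}, to upgrade this to genuine non-nilpotence and to run the irreducibility induction degree by degree. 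The crux throughout is to control infinite-dimensionality and irreducibility simultaneously, that is, to guarantee that the maximal submodule produced has infinite codimension; the hypotheses are exactly what prevent the construction from degenerating to the trivial representation.
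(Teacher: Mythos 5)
There is a genuine gap, and it sits exactly at the point you flag as ``the main obstacle''. You reduce everything to showing that $\mathrm{Aug}(\mathfrak{A}_G)$ is not nil, but this is not the right condition, for two reasons. First, non-nilness of $\mathrm{Aug}(\mathfrak{A}_G)$ holds \emph{unconditionally} for every just infinite $G\in\mathscr{C}$ in standard form: the paper's Proposition~\ref{poly} shows that $F[b_*a_*]$ is a polynomial algebra, hence $b_*a_*\in\mathrm{Aug}(\mathfrak{A}_G)$ is never nilpotent, with no appeal to either hypothesis of the theorem. If your reduction were correct, the two hypotheses would be superfluous. Second, and more fundamentally, non-nilness of an ideal does not prevent it from being the Jacobson radical: the radical of an infinite-dimensional algebra need not be nil (think of $(t)$ in $F[[t]]$). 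What actually has to be ruled out, by Corollary~\ref{C4.4.3} together with the core argument of Proposition~4.22 of~\cite{Bartholdi}, is the equality $\mathrm{Jac}(\mathfrak{A}_G)=\mathrm{Aug}(\mathfrak{A}_G)$, i.e.\ quasi-regularity of every element of the augmentation ideal. Your sketch never produces a witness against quasi-regularity.

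This is precisely where the two hypotheses do their work in the paper, in two quite different ways that your proposal does not reproduce. In the generalised Gupta--Sidki case the paper proves (Lemma~\ref{GS}) by an explicit matrix-recursion and depth-function computation that $1+ba_*$ is \emph{not invertible} in $\mathfrak{A}_G$; since elements of the radical are quasi-regular, this forces $\mathrm{Jac}(\mathfrak{A}_G)=0$. Your alternative of ``transporting Sidki's explicit irreducible module along the inclusion $\mathfrak{A}_H\subseteq\mathfrak{A}_G$'' does not work as stated: irreducibility is not preserved under restriction or extension along a subalgebra inclusion, and in any case the non-nilpotent element you propose to import gives nothing, as above. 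In the graded case the missing ingredient is Smoktunowicz's theorem (Theorem~\ref{Agata}), which for a graded algebra generated in degree one converts non-nilpotence of a single \emph{homogeneous} element into the strict inequality $\mathrm{Jac}(\mathrm{Aug}(\mathfrak{A}_G))<\mathrm{Aug}(\mathfrak{A}_G)$; this is the bridge from nilpotence to quasi-regularity that your ``upgrade to genuine non-nilpotence'' gestures at but does not supply. Your opening observation that faithfulness follows from infinite-dimensionality and just-infiniteness of $G$ is correct (the paper gets both properties simultaneously from the triviality of the core of a maximal right ideal), but the heart of the theorem is the semiprimitivity of $\mathfrak{A}_G$, and that is not established by your argument.
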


\begin{remark}
   The proof of Theorem~\ref{primitive} reveals that the
    groups $G$ to which it applies satisfy
    $\mathrm{Jac}(\mathfrak{A}_G) \ne \mathrm{Aug}(\mathfrak{A}_G)$.
    This implies that $\mathrm{Jac}(F[G])\ne \mathrm{Aug}(F[G])$, and
    hence, even though it may be torsion, $G$ cannot be a counter-example to
    Passman's Conjecture.
\end{remark}

Passman and Temple  showed in~\cite{PassmanTemple} that
for the  Gupta--Sidki $p$-group $\mathrm{GS}_p$ and $E$
  an algebraically closed field of characteristic $p$, if
  $E[\mathrm{GS}_p]$ has a non-trivial irreducible module, then
  $E[\mathrm{GS}_p]$ has infinitely many irreducible modules.  We
extend their result to branch groups in $\mathscr{C}$.

\begin{theorem} \label{thm:PassmanTemple}
  Let $G$ be a branch group in $\mathscr{C}$ and let $E$ be an
  algebraically closed field of characteristic $p$. If $E[G]$ has a
  non-trivial irreducible module, then $E[G]$ has infinitely many
  irreducible modules.
\end{theorem}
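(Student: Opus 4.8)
The plan is to argue by contraposition: assuming that $E[G]$ has only finitely many irreducible modules, I would show that it then has only the trivial one, contradicting the hypothesis. This mirrors the strategy of Passman and Temple, with the ad hoc self-similarity of $\mathrm{GS}_p$ replaced by the branch structure available for every $G\in\mathscr{C}$. Write $n(H)$ for the number of isomorphism classes of irreducible $E[H]$-modules. Two features of the characteristic-$p$, algebraically closed setting will be used throughout: first, every finite $p$-group has only the trivial irreducible $E$-module, since its augmentation ideal is nilpotent; second, $H^2(C_p,E^\times)=1$, because the Frobenius $x\mapsto x^p$ is surjective on $E^\times$, so no non-trivial cocycle can intervene in the Clifford correspondence below.

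The branch hypothesis supplies a normal subgroup of finite index
\[
\mathrm{Rist}_G(1)=R_1\times\cdots\times R_p \trianglelefteq G,
\]
an internal direct product of the rigid vertex stabilisers $R_i=\mathrm{Rist}_G(u_i)$, each isomorphic to one group $R$ that acts on the subtree below $u_i$ as a branch group of the same kind. Since having finitely many irreducibles passes to subgroups of finite index (a standard consequence of Clifford theory), both $R$ and $\mathrm{Rist}_G(1)$ then have finitely many. Because $E$ is algebraically closed I would next identify $\mathrm{Irr}(R_1\times\cdots\times R_p)$ with the external tensor products $S_1\boxtimes\cdots\boxtimes S_p$, $S_i\in\mathrm{Irr}(R_i)$. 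Applying Clifford theory to $\mathrm{Rist}_G(1)\trianglelefteq G$, and using the two characteristic-$p$ facts to see that the inertial quotient contributes exactly one irreducible per orbit, the irreducibles of $G$ are counted by the orbits of $Q=G/\mathrm{Rist}_G(1)$ on $\mathrm{Irr}(R)^{\,p}$ (modules with $\mathrm{Rist}_G(1)$ in the kernel factor through the finite $p$-group $Q$ and contribute only the trivial module). Thus
\[
n(G)=\#\big(\mathrm{Irr}(R)^{\,p}/Q\big).
\]

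The action of $Q$ splits: the quotient $C_p=G/\mathrm{Stab}_G(1)$ permutes the $p$ coordinates cyclically, while $\mathrm{Stab}_G(1)/\mathrm{Rist}_G(1)$ acts coordinate-wise. In the model case where the coordinate-wise part is trivial, Burnside's lemma gives the clean recursion
\[
n(G)=\tfrac{1}{p}\big(n(R)^{p}+(p-1)\,n(R)\big)=:f\big(n(R)\big),
\]
and the elementary facts $f(1)=1$, $f$ strictly increasing, and $f(x)>x$ for $x>1$ do the rest. The rigid stabilisers at successive levels are branch groups of the same type, so the same relation $n(R^{(k)})=f\big(n(R^{(k+1)})\big)$ holds at every level; were all these counts $>1$ they would form a strictly decreasing infinite sequence of positive integers, which is impossible, so some level has count $1$, and $f(1)=1$ propagates back up to force $n(G)=1$.

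The main obstacle lies in two places. First, the external-tensor decomposition of $\mathrm{Irr}(R_1\times\cdots\times R_p)$ rests on a Schur's-lemma statement $\mathrm{End}_{E[R]}(S)=E$ for irreducible $S$; this is where algebraic closedness is essential, and it needs genuine care, since the modules are infinite-dimensional and $E$ may be countable, so the usual uncountability argument for Schur's lemma is unavailable. Second, one must control the full finite $p$-group $Q=G/\mathrm{Rist}_G(1)$, not merely the cyclic part permuting coordinates: the coordinate-wise action of $\mathrm{Stab}_G(1)/\mathrm{Rist}_G(1)$ can merge orbits and a priori lower $n(G)$, so I would need to verify that the orbit count $\#(\mathrm{Irr}(R)^{\,p}/Q)$ still obeys a recursion enjoying the monotonicity that drives the contradiction. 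Securing these two points is the heart of the argument; once they are in place, the integer recursion collapsing every count to $1$ is purely formal.
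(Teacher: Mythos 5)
Your plan is genuinely different from the paper's proof, and in its present form it has gaps that are not merely technical. The paper does not argue by contraposition or count orbits at all: it extracts from the hypothesis a single non-trivial irreducible $V$ of $H=\gamma_3(G)$ (if every Clifford constituent of $U_H$ were trivial, $U$ would be an irreducible module for the finite $p$-group $G/H$ in characteristic $p$, hence trivial), and then, using that $H$ is regular branch so that $\mathrm{Stab}_H(m)$ contains a copy $L_m$ of $H\times\overset{p^m}{\ldots}\times H$, exhibits for every $m$ the $p^m+1$ modules $Y_j=V^{\otimes j}\otimes F_0^{\otimes(p^m-j)}$, which lie in distinct $G$-orbits and therefore produce $p^m+1$ distinct irreducibles of $H$. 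This only needs the \emph{easy} direction of the tensor-product lemma (a tensor product of irreducibles is irreducible) and never an exact count, which is precisely what lets it go through.

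Your route, by contrast, hinges on the identity $n(G)=\#\bigl(\mathrm{Irr}(R)^{p}/Q\bigr)$ and on a monotone recursion, and three of its load-bearing steps are missing. First, the identity requires that \emph{every} irreducible of $R_1\times\cdots\times R_p$ be an external tensor product; this surjectivity needs $\mathrm{End}_{E[R]}(S)=E$ for infinite-dimensional irreducibles $S$, and over a countable algebraically closed field such as $\overline{\mathbb{F}_p}$ the standard cardinality proof of Schur's lemma is unavailable and the conclusion can fail for countable-dimensional algebras --- you flag this but offer no substitute, and without it the counting formula is unsupported. Second, the Burnside recursion $n(G)=\tfrac1p\bigl(n(R)^p+(p-1)n(R)\bigr)$ is derived only in a ``model case'' that does not occur: $\mathrm{Stab}_G(1)/\mathrm{Rstab}_G(1)$ genuinely acts coordinate-wise, can fuse orbits, and no inequality replacing the recursion is proved; yet the strict monotonicity $f(x)>x$ for $x>1$ is exactly what drives your infinite descent. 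Third, iterating the recursion down the tree presupposes that each rigid vertex stabiliser is again a branch group ``of the same kind'' whose quotient permutes the next layer transitively; for groups in $\mathscr{C}$ the restriction of $\mathrm{Rstab}_G(u)$ to $T_u$ need not even act transitively on the first layer of $T_u$, so the same $f$ is not available at lower levels. Until these three points are secured the argument does not close, whereas the paper's direct construction sidesteps all of them.
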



\section{Preliminaries} \label{sec:prelim}

 Let $T$ be the \textit{regular} $p$-adic rooted tree,
  meaning all vertices have the same out-degree~$p$.  Using the
  alphabet $X = \{1,2,\ldots,p\}$, the vertices $u_\omega$ of $T$ are
  labelled bijectively by elements $\omega$ of the free
  monoid~$\overline{X}$ in the following natural way.  The root of~$T$
  is labelled by the empty word~$\varnothing$, and for each word
  $\omega \in \overline{X}$ and letter $x \in X$ there is an edge
  connecting $u_\omega$ to~$u_{\omega x}$.  More generally, we say
  that $u_\omega$ precedes $u_\lambda$, or equivalently that $u_\lambda$
  succeeds $u_\omega$, whenever $\omega$ is a prefix of $\lambda$.

  There is a natural length function on~$\overline{X}$.  The words
  $\omega$ of length $\lvert \omega \rvert = n$, representing vertices
  $u_\omega$ that are at distance $n$ from the root, are the $n$th
  level vertices and constitute the \textit{$n$th layer} of the tree;
  the \textit{boundary} $\partial T$, whose elements correspond
  naturally to infinite rooted paths, is in one-to-one correspondence
  with the $p$-adic integers.

  Denote by $T_u$ the full rooted subtree of $T$ that has its root at
  a vertex~$u$ and includes all vertices succeeding~$u$.  For any
  two vertices $u = u_\omega$ and $v = u_\lambda$, the map
  $u_{\omega \tau} \mapsto u_{\lambda \tau}$, induced by replacing the
  prefix $\omega$ by $\lambda$, yields an isomorphism between the
  subtrees $T_u$ and~$T_v$.  We write $T_n$ to denote the subtree
  rooted at a generic vertex of level $n$.

  Every automorphism of $T$ fixes the root and the orbits of
  $\mathrm{Aut}(T)$ on the vertices of the tree $T$ are precisely its
  layers. For $f \in \mathrm{Aut}(T)$, the image of a vertex $u$ under
  $f$ is denoted by~$u^f$.  Observe that $f$ induces a faithful action
  on the monoid~$\overline{X}$ such that
  $(u_\omega)^f = u_{\omega^f}$.  For $\omega \in \overline{X}$ and
  $x \in X$ we have $(\omega x)^f = \omega^f x'$ where $x' \in X$ is
  uniquely determined by $\omega$ and~$f$.  This induces a permutation
  $f(\omega)$ of $X$ so that
  \[
  (\omega x)^f = \omega^f x^{f(\omega)}, \qquad \text{and consequently}
  \quad   (u_{\omega x})^f = u_{\omega^f x^{f(\omega)}}.
  \]
  The automorphism $f$ is \textit{rooted} if $f(\omega) = 1$ for
  $\omega \ne \varnothing$.  It is \textit{directed}, with directing
  path $\ell \in \partial T$, if the support
  $\{ \omega \mid f(\omega) \ne 1 \}$ of its labelling is infinite and
  marks only vertices at distance $1$ from the set of
    vertices corresponding  to the path~$\ell$.


\subsection{Subgroups of $\mathrm{Aut}(T)$}
Let $G$ be a subgroup of $\mathrm{Aut}(T)$ acting \textit{spherically
  transitively}, that is, transitively on every layer of $T$. The
\textit{vertex stabiliser} $\mathrm{Stab}_G(u)$ is the subgroup
consisting of elements in $G$ that fix the vertex~$u$.  For
$n \in \mathbb{N}$, the \textit{$n$th level stabiliser}
  $\mathrm{Stab}_G(n)= \bigcap_{\lvert \omega \rvert =n}
  \mathrm{Stab}_G(u_\omega)$
is the subgroup consisting of automorphisms that fix all vertices at
level~$n$.  Denoting by $T_{[n]}$ the finite subtree of $T$ on
vertices up to level~$n$, we see that $\mathrm{Stab}_G(n)$ is equal to
the kernel of the induced action of $G$ on $T_{[n]}$.

The full automorphism group $\mathrm{Aut}(T)$ is a profinite group:
\[
\mathrm{Aut}(T)= \varprojlim_{n\to\infty} \mathrm{Aut}(T_{[n]})
\]
The topology of $\mathrm{Aut}(T)$ is defined by the open subgroups
$\mathrm{Stab}_{\mathrm{Aut}(T)}(n)$, $n \in \mathbb{N}$.  The
subgroup $G$ of $\mathrm{Aut}(T)$ has the \textit{congruence subgroup
  property} if for every subgroup $H$ of finite index in $G$, there
exists some $n$ such that $\mathrm{Stab}_G(n)\subseteq H$. For branch
groups, having the congruence subgroup property is independent of the
choice of tree and action; see~\cite{Alejandra}. In fact,
  in most of the cases that we consider, there is essentially a unique
  tree and action associated to the group;
  cf.~Corollary~\ref{uniqueaction}.

Each $g\in \mathrm{Stab}_{\mathrm{Aut}(T)} (n)$ can be 
  described completely in terms of its restrictions to the subtrees
  rooted at vertices at level~$n$.  Indeed, there is a natural
  isomorphism
\[
\psi_n \colon \mathrm{Stab}_{\mathrm{Aut}(T)}(n) \rightarrow
\prod\nolimits_{\lvert \omega \rvert = n} \mathrm{Aut}(T_{u_\omega})
\cong \mathrm{Aut}(T) \times \overset{p^n}{\ldots} \times
\mathrm{Aut}(T).
\]

We write $U_u^G$ for the restriction of the vertex stabiliser
$\mathrm{Stab}_G(u)$ to the subtree $T_u$ rooted at a
vertex $u$. Since $G$ acts spherically transitively, the vertex
stabilisers at every level are conjugate under~$G$.  The common
isomorphism type of the restriction of the $n$th level vertex
stabilisers is the \textit{$n$th upper companion group} $U_n^G$ of
$G$.  The group $G$ is \textit{fractal} if every upper companion group
$U_n^G$ coincides with the group~$G$, after the natural identification
of subtrees.

The \textit{rigid vertex stabiliser} of $u$ in $G$ is the subgroup
$\mathrm{Rstab}_G(u)$ consisting of all automorphisms in $G$ that fix
all vertices $v$ of $T$ not succeeding~$u$.
  The \textit{rigid $n$th level stabiliser} is the product
  \[
  \mathrm{Rstab}_G(n) = \prod\nolimits_{\lvert \omega \rvert = n}
  \mathrm{Rstab}_G(u_\omega) \trianglelefteq G
  \]
  of the rigid vertex stabilisers of the vertices at level~$n$.  The
rigid vertex stabilisers at each level are conjugate under~$G$ and the
common isomorphism type $L_n^G$ of the $n$th level rigid vertex
stabilisers is called the \textit{$n$th lower companion group} of $G$.

We recall that the spherically transitive group $G$ is a \emph{branch
  group}, if $\mathrm{Rstab}_G(n)$ has finite index in $G$ for every
$n \in \mathbb{N}$.  For more detailed algebraic and geometric
characterisations see~\cite{NewHorizons}.  If, in addition, $G$ is
fractal and $1 \not = K \leq \mathrm{Stab}_G(1)$ with
$K\times \ldots \times K \subseteq \psi_1(K)$ and
$\lvert G : K \rvert < \infty$, then $G$ is said to be \emph{regular
  branch over $K$}.  Lastly we note that an infinite group $G$ is
\emph{just infinite} if all its proper quotients are finite.


\subsection{The collection $\mathscr{C}$ of generalised multi-edge
  spinal groups}

For $j \in \{ 1, \ldots, p \}$ let $r_j \in \{0,1,\ldots,p-1\}$, with
$r_j \not = 0$ for at least one index~$j$, and fix the numerical datum
$\mathbf{E} = (\mathbf{E}^{(1)}, \ldots, \mathbf{E}^{(p)})$, where
each
$\mathbf{E}^{(j)} = (\mathbf{e}^{(j)}_1, \ldots,
\mathbf{e}^{(j)}_{r_j})$
is an $r_j$-tuple of $(\mathbb{Z}/p\mathbb{Z})$-linearly independent
vectors
\[
\mathbf{e}^{(j)}_i = \big( e^{(j)}_{i,1}, \ldots, e^{(j)}_{i,p-1}
\big) \in (\mathbb{Z}/p\mathbb{Z})^{p-1}, \quad i \in \{1, \ldots, r_j
\}.
\]

By $a$ we denote the rooted automorphism, corresponding to the
$p$-cycle $(1 \; 2 \; \ldots \; p) \in \mathrm{Sym}(p)$, that
cyclically permutes the vertices at the first level of~$T$.  Observe that
\begin{equation} \label{equ:Sylow-pro-p} S = \big\{ f \in
  \mathrm{Aut}(T) \mid \forall \omega \in \overline{X} : f(\omega) \in
  \langle a \rangle \big\} \cong \varprojlim_{n \in \mathbb{N}} \,
  C_p \wr \ldots \wr C_p \wr C_p,
\end{equation}
the inverse limit of $n$-fold iterated wreath products of $C_p$,
forms a Sylow-pro\nobreakdash-$p$ subgroup of $\mathrm{Aut}(T)$.  The
\emph{generalised multi-edge spinal group} in \emph{standard form}
associated to the datum $\mathbf{E}$ is the group
\begin{align*}
  G = G_\mathbf{E} & = \langle a, \mathbf{b}^{(1)}, \ldots,
  \mathbf{b}^{(p)} \rangle \\
  & = \big\langle \{a \} \cup \{ b^{(j)}_i \mid 1 \leq j \leq p, \, 1
  \leq i \leq r_j \} \big\rangle \leq S,
\end{align*}
where, for each $j \in \{1,\ldots,p\}$, the generator family
$\mathbf{b}^{(j)} = \{b^{(j)}_1, \ldots, b^{(j)}_{r_j}\}$ consists of
commuting directed automorphisms $b^{(j)}_i \in \mathrm{Stab}_G(1)$
along the directed path
\[
\big( \varnothing, (p-j+1), (p-j+1)(p-j+1), \ldots \big) \in \partial T
\]
that satisfy the recursive relations
\[
\psi_1(b^{(j)}_i) = \Big( a^{e^{(j)}_{i,j}}, \ldots, a^{e^{(j)}_{i,p-1}},b^{(j)}_i,
a^{e^{(j)}_{i,1}},\ldots, a^{e^{(j)}_{i,j-1}} \Big);
\]
sometimes $\mathbf{e}^{(j)}_i$ is called the \emph{defining vector} of
$b^{(j)}_i$.  For each $j \in \{1,\ldots,p\}$ with $r_j \not = 0$ the
subgroup $\langle a, \mathbf{b}^{(j)} \rangle = \langle a, b^{(j)}_1,
\ldots, b^{(j)}_{r_j} \rangle$ of $G$ is a multi-edge spinal group;
compare Example~\ref{exa:multi-edge}.

Observe that the directing paths for the generator families
$\mathbf{b}^{(1)}, \ldots, \mathbf{b}^{(p)}$, i.e., the paths
\[
(\varnothing, p, pp, \ldots), \quad (\varnothing, (p-1), (p-1)(p-1),
\ldots), \quad \ldots, \quad (\varnothing, 1, 11, \ldots),
\]
are pairwise distinct.  We arrive at the notion of a \emph{generalised
  multi-edge spinal group}, given in abridged form in
Section~\ref{sec:intro}, by considering subgroups of $\mathrm{Aut}(T)$
that are conjugate to a generalised multi-edge spinal group in
standard form and declaring $\mathscr{C}$ to be the class of all such
groups.  Whenever they are branch, there is, in fact, a
unique branch action associated to these groups; see
  Corollary~\ref{uniqueaction}.


\section{First properties of generalised
  multi-edge spinal groups} \label{sec:properties}

\subsection{Basic properties} Here we include basic results concerning
generalised multi-edge spinal groups, i.e., groups in the class
$\mathscr{C}$.  Directly from the definition we deduce that each group
in $\mathscr{C}$ is infinite, fractal and acts spherically
transitively on~$T$.

\begin{lemma}\label{dagger}
  Let
  $G = \langle a, \mathbf{b}^{(1)}, \ldots, \mathbf{b}^{(p)} \rangle
  \in \mathscr{C}$
  be in standard form, and let $k \in \{1,\ldots, p\}$ with
  $r_k \not = 0$.  There exists an automorphism $f\in \mathrm{Aut}(T)$
  of the form $f=f_0f_1=f_1f_0$, where $f_0$ is a rooted automorphism
  corresponding to a permutation $\pi \in \mathrm{Sym}(p)$ with
  $(p-k+1)\pi = p-k+1$ and $f_1\in \mathrm{Stab}_G(1)$ with
  $\psi_1(f_1) = (f,\ldots,f)$, such that
  $G^f = \langle a, \tilde{\mathbf{b}}^{(1)}, \ldots,
  \tilde{\mathbf{b}}^{(p)} \rangle \in \mathscr{C}$
  is again in standard form and satisfies
  $\psi_1(\tilde{b}^{(k)}_1) = \big( a^{\tilde{e}^{(k)}_{1,k}},
  \ldots, a^{\tilde{e}^{(k)}_{1,p-1}},
  \tilde{b}^{(k)}_1,a,a^{\tilde{e}^{(k)}_{1,2}},\ldots,
  a^{\tilde{e}^{(k)}_{1,k-1}} \big)$.
\end{lemma}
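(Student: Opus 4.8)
The plan is to build $f$ as a self-similar automorphism and to trace how conjugation by it transforms the first-level data of the generators. First I fix the permutation $\pi$. Writing $X=\{1,\dots,p\}$ as $\mathbb{Z}/p\mathbb{Z}$ and letting $\alpha=(1\,2\,\cdots\,p)$ be the $p$-cycle realised by $a$, I take $\pi$ to be an \emph{affine} permutation $x\mapsto cx+d$ with $c\in(\mathbb{Z}/p\mathbb{Z})^\times$; these are exactly the elements normalising $\langle\alpha\rangle$, and one computes $\pi^{-1}\alpha\pi=\alpha^{c}$. Requiring $(p-k+1)\pi=p-k+1$ pins down $d=(p-k+1)(1-c)$, so $\pi$ depends only on the multiplier $c$, to be chosen last. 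I then let $f\in\mathrm{Aut}(T)$ be the unique automorphism whose first-level permutation is $\pi$ and all of whose first-level sections equal $f$ itself, set $f_0$ to be the rooted automorphism of $\pi$, and put $f_1=f_0^{-1}f$. A short check gives $f_1\in\mathrm{Stab}(1)$ with $\psi_1(f_1)=(f,\dots,f)$, and that $f_0$ commutes with $f$, so that $f=f_0f_1=f_1f_0$ as required.

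The heart of the argument is the effect of conjugation by $f$ on sections. For any $g\in\mathrm{Aut}(T)$ with first-level permutation $\tau$ and $x$-th section $g_x$, a direct computation shows that $g^{f}$ has first-level permutation $\pi^{-1}\tau\pi$ and $x$-th section $(g_{x\pi^{-1}})^{f}$; the conjugating factor on each section is again $f$ precisely because $f$ is self-similar. Applied to $a$, whose sections are trivial, this telescopes to $a^{f}=a^{c}$, since $\pi^{-1}\alpha\pi=\alpha^{c}$ and trivial sections stay trivial. Applied to a directed generator $b^{(j)}_i$, the directed section sitting at the vertex $p-j+1$ is sent to $(b^{(j)}_i)^{f}$, while every other section $a^{e}$ becomes $(a^{e})^{f}=a^{ce}$ and is carried to the vertex dictated by $\pi$. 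I expect this pair of telescoping computations, and in particular pinning down $a^{f}=a^{c}$ exactly, to be the main obstacle: it is what guarantees that conjugation keeps all non-directed sections inside $\langle a\rangle$, and it is also the reason a merely rooted conjugator would not do, since then the directed section would be replaced by the old generator rather than by itself.

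From these formulas I read off that $G^{f}$ is again a generalised multi-edge spinal group in standard form. The vertex $p-k+1$ is fixed by $\pi$, so $(b^{(k)}_1)^{f}$ is directed along the unchanged path $(\varnothing,p-k+1,\dots)$ and is self-similar there; the directed paths of the other families are permuted bijectively among the vertices $p-j+1$ by $\pi$, hence remain pairwise disjoint; and all non-directed sections are powers of $a$ because $a^{f}=a^{c}$. Each defining vector is transformed by the invertible rule $e\mapsto c\,(\pi\text{-repositioning of }e)$, so linear independence inside each family is preserved and $G^f\in\mathscr{C}$. Reading the entry of $\psi_1((b^{(k)}_1)^{f})$ immediately following the directed section gives its leading coefficient as $c\,e^{(k)}_{1,\,c^{-1}}$.

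It remains to normalise this leading coefficient to $1$. As $\mathbf{e}^{(k)}_1\neq\mathbf{0}$, I pick an index $m$ with $e^{(k)}_{1,m}\neq 0$ and set $c=m^{-1}$; then the leading coefficient equals $m^{-1}e^{(k)}_{1,m}\neq 0$. Finally I replace $(b^{(k)}_1)^{f}$ by a suitable power $\tilde b^{(k)}_1=((b^{(k)}_1)^{f})^{\lambda}$, which rescales its whole defining vector and leaves it a self-similar directed automorphism; taking $\lambda$ to be the inverse of the leading coefficient makes that coefficient equal to $1$. Since passing to this power is just a change of the chosen generators within the standard form, the resulting $\psi_1(\tilde b^{(k)}_1)$ has exactly the asserted shape, completing the construction.
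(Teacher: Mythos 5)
Your proof is correct and follows essentially the same route as the paper, whose own proof simply defers to \cite[Lemma~3.3]{Paper}: one conjugates by the self-similar automorphism built from an affine permutation $x\mapsto cx+d$ fixing $p-k+1$, uses $a^f=a^c$ to see that the non-directed sections stay in $\langle a\rangle$ and that the defining vectors transform by an invertible scaling-and-repositioning, and then normalises the leading entry to $1$ by passing to a power of the conjugated generator. Your bookkeeping (the new entry $\tilde e_1=c\,e_{1,c^{-1}}$, the choice $c=m^{-1}$ for some $m$ with $e^{(k)}_{1,m}\ne 0$, and the final rescaling) matches the intended argument.
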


\begin{proof}
  In essence, we may use the same proof as that of
  \cite[Lemma~3.3]{Paper}.
\end{proof}

Next we recall and adapt a reduction lemma from~\cite{Paper} to the
new situation.

\begin{lemma}[{\cite[Lemma~3.4]{Paper}}] \label{rowechelon} Let
  $G = \langle a, \mathbf{b}^{(1)}, \ldots, \mathbf{b}^{(p)} \rangle
  \in \mathscr{C}$
  be in standard form, and let $k \in \{1,\ldots, p\}$ with
  $r = r_k \not = 0$.  Then there exists a group
  $\tilde{G} = \langle a, \tilde{\mathbf{b}}^{(1)}, \ldots,
  \tilde{\mathbf{b}}^{(p)} \rangle \in \mathscr{C}$
  in standard form associated to the datum
  $\tilde{\mathbf{E}} = (\tilde{\mathbf{E}}^{(1)}, \ldots,
  \tilde{\mathbf{E}}^{(p)})$,
  with
  $\tilde{\mathbf{E}}^{(j)} = \big(\tilde {\mathbf{e}}^{(j)}_1,\ldots,
  \tilde {\mathbf{e}}^{(j)}_{r_j} \big)$
  supplying defining vectors for the directed automorphisms
    $\tilde{\mathbf{b}}^{(j)} = \big( \tilde{b}^{(j)}_1, \ldots,
    \tilde{b}^{(j)}_{r_j} \big)$,
  such that $\tilde G$ is conjugate to $G$ by an element
  $f \in \mathrm{Aut}(T)$ as in Lemma~\ref{dagger} and the following
  holds:
 \begin{enumerate}
 \item[(1)] $\tilde{e}^{(k)}_{i,1} = 1$ in $\mathbb{Z}/p\mathbb{Z}$ for each
   $i\in \{1,\ldots,r\}$;
 \item[(2)] if $r=2$ and $p=3$, then
   $\tilde{\mathbf{e}}^{(k)}_1=(1,0),
     \tilde{\mathbf{e}}^{(k)}_2=(1,1)$;
 \item[(3)] if $r=2$ and $p>3$, then either
   \begin{enumerate}
   \item[(a)] for each $i\in \{1,2\}$ there exists
     $m \in \{2,\ldots, p-2\}$ such that
     $\tilde{e}^{(k)}_{i,m-1} \tilde{e}^{(k)}_{i,m+1} \ne
     \big( \tilde{e}^{(k)}_{i,m} \big)^2$ in $\mathbb{Z}/p\mathbb{Z}$, or
 \item[(b)]
   $\tilde{\mathbf{e}}^{(k)}_1 = (1,0,\ldots,0,0),
     \tilde{\mathbf{e}}^{(k)}_2 = (1,0,\ldots,0,1)$;
   \end{enumerate}
 \item[(4)] if $r\ge 3$ then for each $i\in \{1,\ldots,r\}$ there
   exists $m \in \{2,\ldots, p-2\}$ such that
   $\tilde{e}^{(k)}_{i,m -1} \tilde{e}^{(k)}_{i,m+1} \ne \big(
   \tilde{e}^{(k)}_{i,m} \big)^2$ in $\mathbb{Z}/p\mathbb{Z}$.
\end{enumerate}
\end{lemma}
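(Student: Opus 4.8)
The plan is to reduce the statement to a piece of linear algebra over $\mathbb{Z}/p\mathbb{Z}$ and then run a case analysis governed by a general-position argument. First I would encode the $k$th family by the $r \times (p-1)$ matrix $M$ whose rows are the defining vectors $\mathbf{e}^{(k)}_1, \ldots, \mathbf{e}^{(k)}_r$; by hypothesis these rows are linearly independent, so they span an $r$-dimensional subspace $V \le (\mathbb{Z}/p\mathbb{Z})^{p-1}$. There are two kinds of moves that preserve membership in $\mathscr{C}$ and the standard form. The first is a row operation: since $\langle b^{(k)}_1, \ldots, b^{(k)}_r \rangle$ is elementary abelian and the defining vector of a product $\prod_i (b^{(k)}_i)^{c_i}$ is $\sum_i c_i \mathbf{e}^{(k)}_i$, re-choosing a basis of this group realises the full action of $\mathrm{GL}_r(\mathbb{Z}/p\mathbb{Z})$ on the rows of $M$, and hence any change of basis of $V$. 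The second is conjugation by an element $f$ of the shape prescribed in Lemma~\ref{dagger}: as packaged there, this realises a cyclic shift of the $p-1$ coordinates (via the rooted part $\pi$, which must fix the directed vertex $p-k+1$) together with a global scalar multiplication by some $\lambda \in (\mathbb{Z}/p\mathbb{Z})^\times$ (via the self-similar part $f_1$), and in particular lets us rescale a chosen coordinate to~$1$.

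Next I would establish~(1). If the coordinate functional $v \mapsto v_1$ vanishes identically on $V$, I first apply a cyclic shift to bring to the front a coordinate on which $V$ is nonzero; such a coordinate exists because $V \ne 0$. Thus I may assume $v \mapsto v_1$ is nonzero on $V$, so that $A := V \cap \{v : v_1 = 1\}$ is a nonempty affine subspace of dimension $r-1$. Choosing $w_0 \in A$ and a basis $u_1, \ldots, u_{r-1}$ of $V \cap \{v : v_1 = 0\}$, the vectors $w_0, w_0 + u_1, \ldots, w_0 + u_{r-1}$ form a basis of $V$ all of whose first coordinates equal~$1$; taking these as the new defining vectors gives condition~(1).

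The case analysis for (2)--(4) then rests on understanding the \emph{bad} vectors, namely those $\mathbf{e}$ with $e_1 = 1$ violating the required inequality, i.e.\ satisfying $e_{m-1} e_{m+1} = e_m^2$ for every $m \in \{2, \ldots, p-2\}$. These are exactly the geometric progressions $(1, t, t^2, \ldots, t^{p-2})$, $t \in \mathbb{Z}/p\mathbb{Z}$, the $p$ points of a rational normal curve inside the slice $\{v : v_1 = 1\}$. The decisive feature of this curve (for $p > 3$, where its degree $p-2$ is at least~$2$) is that any $p-1$ of its points are linearly independent, by a Vandermonde computation; in particular no three are collinear. For $r \ge 3$ the affine slice $A$ has $p^{\,r-1} \ge p^2$ points while it can contain at most $p$ bad ones, so the good points are abundant and a routine greedy argument produces $r$ affinely independent good points; since points of $A$ with common first coordinate~$1$ are linearly independent precisely when they are affinely independent, these give a basis of $V$ realising~(4). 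For $r = 2$ and $p > 3$, the line $A$ meets the curve in at most two points and so contains at least $p-2 \ge 2$ good points; picking two of them yields two linearly independent non-geometric defining vectors, which is~(3a), while the residual configuration, where the data are already of the shape $\{(1,0,\ldots,0),(1,0,\ldots,0,1)\}$, is recorded as the explicit normal form~(3b). Finally, for $r = 2$ and $p = 3$ the index set $\{2,\ldots,p-2\}$ is empty, so every length-$(p-1)$ vector is vacuously geometric and no genericity is available; here I would instead reduce the concrete $2 \times 2$ matrix directly, by row operations and a scaling, to the canonical basis $(1,0),\,(1,1)$, which is~(2).

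The main obstacle I anticipate is not the existence of good vectors but the passage from ``good points exist'' to the existence of a \emph{full basis} of good, first-coordinate-one vectors, together with the precise treatment of the small cases. Concretely, one must choose the new generators to be simultaneously linearly independent, all normalised to first coordinate~$1$, and all non-geometric, and this is exactly what forces the explicit normal forms in~(2) and~(3b) when the affine slice $A$ is too small to avoid the geometric locus. A secondary bookkeeping burden is to verify at each step that the moves keep the group in standard form in $\mathscr{C}$ and that the conjugating element retains the shape demanded by Lemma~\ref{dagger} --- in particular that the admissible cyclic shifts are only those fixing the directed vertex $p-k+1$, and that the single global scalar $\lambda$ can be chosen compatibly with normalising all $r$ rows at once.
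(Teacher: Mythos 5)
The paper does not actually prove this lemma; it is imported wholesale from \cite[Lemma~3.4]{Paper}, so your proposal has to stand on its own. Its skeleton (row operations from re-choosing generators of the elementary abelian group $\langle \mathbf{b}^{(k)}\rangle$, the coordinate symmetries coming from Lemma~\ref{dagger}, and a count of degenerate vectors in the affine slice $\{v_1=1\}$) is the right one, but there is a genuine gap at the heart of the case analysis: your description of the bad locus is wrong. The vectors with $e_1=1$ satisfying $e_{m-1}e_{m+1}=e_m^{2}$ for \emph{all} $m\in\{2,\ldots,p-2\}$ are not only the geometric progressions $(1,t,\ldots,t^{p-2})$: once $e_2=0$ the zeros propagate only through $e_2,\ldots,e_{p-2}$, and the condition at $m=p-2$ reads $e_{p-3}e_{p-1}=0=e_{p-2}^{2}$, leaving $e_{p-1}$ completely free. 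Hence every vector $(1,0,\ldots,0,c)$ is bad, and the bad set has $2p-1$ points: the $p-1$ genuine geometric progressions together with the full line $L_0=\{(1,0,\ldots,0,c)\mid c\in\mathbb{Z}/p\mathbb{Z}\}$. With your count, the line $A$ would always contain at least $p-2\ge 2$ good points, so your argument proves that (3a) \emph{always} holds and (3b) is never needed --- which contradicts the very structure of the lemma. The correct dichotomy for $r=2$, $p>3$ is: either $A=L_0$ (all of $A$ is bad, no admissible coordinate symmetry escapes this, and one settles for the normal form (3b)), or $A\ne L_0$ meets the bad set in at most $2+1=3$ points and so contains $p-3\ge2$ good points, giving (3a). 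Your sentence recording (3b) as a ``residual configuration'' is inconsistent with your own analysis, which leaves no room for it. (The count for $r\ge3$ survives the correction, since $p^{\,r-1}-p^{\,r-2}>2p-1$.)

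A secondary but real error: conjugation by an $f$ as in Lemma~\ref{dagger} does not induce a cyclic shift of the coordinates $1,\ldots,p-1$. The rooted part $\pi$ must normalise $\langle(1\,2\,\ldots\,p)\rangle$ and fix the directed position $p-k+1$, so its effect on the offsets $l$ from that position is a dilation $l\mapsto sl$ with $s\in(\mathbb{Z}/p\mathbb{Z})^{\times}$; a cyclic shift would move the entry carrying the directed automorphism. In the places you invoke the shift (moving a coordinate on which $V$ is nonzero into position~$1$, and checking whether the configuration $A=L_0$ can be avoided) the dilation happens to do the same job, but this needs to be verified, and the accompanying scalar is not a free parameter of the conjugation --- it is absorbed into your row operations by taking powers of the generators. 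With the bad locus corrected and the symmetry group identified properly, your approach does yield the lemma.
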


As in~\cite[Section~3]{Paper}, we identify some
  `exceptional' groups to be excluded from some of our
  results: let $\mathscr{C}_\mathrm{reg}$ be the class of
  groups that are conjugate in $\mathrm{Aut}(T)$ to a group
  $\langle a, \mathbf{b}^{(1)}, \ldots, \mathbf{b}^{(p)} \rangle \in
  \mathscr{C}$
  in standard form such that every non-empty generator family
  $\mathbf{b}^{(j)}$, $j \in \{1, \ldots, p\}$, features at least one
  non-constant defining vector $\mathbf{e}^{(j)}_i \not \in \{
  (\alpha,\ldots,\alpha) \mid \alpha \in \mathbb{Z}/p\mathbb{Z} \}$ with
  $1 \le i \le r_j$.

\begin{proposition} \label{sym} Let
  $G = \langle a, \mathbf{b}^{(1)}, \ldots, \mathbf{b}^{(p)} \rangle
  \in \mathscr{C}_\mathrm{reg}$ be in standard form. Then
  \[
  \psi_1(\gamma_3(\mathrm{Stab}_G(1))) = \gamma_3(G) \times
  \overset{p}{\ldots} \times \gamma_3(G).
  \]
  In particular,
  \[
  \gamma_3(G) \times \overset{p}{\ldots}\times \gamma_3(G) \subseteq
  \psi_1(\gamma_3(G)),
  \]
  and $G$ is regular branch over $\gamma_3(G)$.
\end{proposition}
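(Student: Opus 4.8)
The plan is to establish the first displayed identity and then deduce the rest by a standard branch-group argument. I would begin by proving the crucial inclusion $\gamma_3(G)\times\cdots\times\gamma_3(G)\subseteq\psi_1(\gamma_3(\mathrm{Stab}_G(1)))$, since once we have the identification $\psi_1(\gamma_3(\mathrm{Stab}_G(1)))=\gamma_3(G)\times\cdots\times\gamma_3(G)$, both the ``in particular'' inclusion and the regular branch conclusion follow formally. The reverse inclusion $\psi_1(\gamma_3(\mathrm{Stab}_G(1)))\subseteq\gamma_3(G)\times\cdots\times\gamma_3(G)$ should be the easy direction: for any $g\in\mathrm{Stab}_G(1)$ one has $\psi_1(g)\in G\times\cdots\times G$ by fractalness, and commutators of elements of $\mathrm{Stab}_G(1)$ map under $\psi_1$ into componentwise commutators, so iterating the commutator bracket three times lands each coordinate inside $\gamma_3(G)$.

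For the hard direction I would exploit the reduction afforded by Lemma~\ref{rowechelon} together with the conjugation-invariance built into $\mathscr{C}_\mathrm{reg}$: after replacing $G$ by a conjugate in standard form, I may assume the defining vectors are in the normalised shape listed there, and in particular that for each non-empty family there is a non-constant defining vector. The strategy is to produce, coordinate by coordinate, enough elements of $\gamma_3(\mathrm{Stab}_G(1))$ whose image under $\psi_1$ is a triple commutator concentrated in a single coordinate. Concretely, I would compute $\psi_1$ of commutators of the form $[[b^{(j)}_i,a^{s}b^{(j)}_i a^{-s}],\,\cdot\,]$ or of $[[x,y],z]$ with $x,y,z$ chosen among $a$-conjugates of the directed generators; because each $\psi_1(b^{(j)}_i)$ has the shape $(a^{\ast},\ldots,a^{\ast},b^{(j)}_i,a^{\ast},\ldots)$, taking commutators in $\mathrm{Stab}_G(1)$ produces tuples whose entries are themselves commutators in $G$, and the non-constancy of some defining vector guarantees that the resulting $a$-entries do not all collapse. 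The goal is to show that the subgroup of $G\times\cdots\times G$ generated by all such images contains $\gamma_3(G)$ in one fixed coordinate while being trivial elsewhere; spherical transitivity of the $\langle a\rangle$-action then shifts this to every coordinate, yielding the full direct product $\gamma_3(G)\times\cdots\times\gamma_3(G)$.

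The main obstacle, and the step requiring the most care, is showing that one can isolate $\gamma_3(G)$ in a single coordinate rather than merely obtaining some proper or diagonal subgroup. This is precisely where the $\gamma_3$ (rather than $\gamma_2$) and the hypothesis $G\in\mathscr{C}_\mathrm{reg}$ enter: passing to the third term of the lower central series kills the abelianised contributions that would otherwise force a diagonal constraint, and the existence of a non-constant defining vector in each family supplies a commutator $[a^{e^{(j)}_{i,\ast}},\,\cdot\,]$ with non-trivial effect, so that the projection of the generated subgroup onto the relevant coordinate already surjects onto $\gamma_3(G)$. Here I expect to mirror closely the bookkeeping in \cite[Section~3]{Paper} for multi-edge spinal groups, checking that the argument survives the generalisation to several families along disjoint paths; since the families have mutually disjoint directing paths, the generators of distinct families act independently on the relevant coordinates, so the single-family computation of \cite{Paper} applies verbatim within each block.

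Finally, with $\psi_1(\gamma_3(\mathrm{Stab}_G(1)))=\gamma_3(G)\times\cdots\times\gamma_3(G)$ in hand, I would note that $\gamma_3(\mathrm{Stab}_G(1))\leq\gamma_3(G)$ (indeed $\mathrm{Stab}_G(1)\leq G$), so the displayed inclusion $\gamma_3(G)\times\cdots\times\gamma_3(G)\subseteq\psi_1(\gamma_3(G))$ is immediate. Together with $G$ being fractal and the fact that $\gamma_3(G)$ has finite index in $G$ (as $G$ is finitely generated and $G/\gamma_3(G)$ is finite), this verifies the three defining conditions of being regular branch over $\gamma_3(G)$, completing the proof.
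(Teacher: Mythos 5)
Your overall architecture matches the paper's: reduce by spherical transitivity to placing $\gamma_3(G)$ in a single coordinate, use that $\gamma_3(G)$ is normally generated by left-normed triple commutators in the generators, and invoke Lemma~\ref{rowechelon} to normalise defining vectors. The easy inclusion and the deduction of the regular branch property are also handled correctly. However, there is a genuine gap at the decisive step. The normal generators of $\gamma_3(G)$ include \emph{mixed} commutators $[a,c_k,c_l]$, $[c_k,c_l,a]$ and $[c_k,c_l,c_m]$ with $c_k\in\mathbf{b}^{(k)}$, $c_l\in\mathbf{b}^{(l)}$ and $k\ne l$. These do not lie in any single multi-edge spinal subgroup $G_j=\langle a,\mathbf{b}^{(j)}\rangle$, so your assertion that ``the single-family computation of \cite{Paper} applies verbatim within each block'' does not cover them; the blocks do not normally generate $\gamma_3(G)$ on their own. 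Disjointness of the directing paths does not make the cross-family terms vanish --- it only ensures that conjugates such as $c_k^{\,a^k}$ and $c_l^{\,a^l}$ concentrate their directed parts in the same coordinate, which is exactly what makes $([c_k,c_l,c_m],1,\ldots,1)$ and $([c_k,c_l,a],1,\ldots,1)$ easy.

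The genuinely hard case, which your proposal neither isolates nor resolves, is $([a,c_k,c_l],1,\ldots,1)$ with $c_k=b_i^{(k)}$ and $c_l=b_j^{(l)}$ from distinct families. The paper handles it by manufacturing an auxiliary element $h\in\mathrm{Stab}_G(1)$ with $\psi_1(h)=(xb_j^{(l)},*,\ldots,*,1)$ for a suitable $x\in G_k$, so that $\psi_1\bigl([(b_i^{(k)})^{a^{k-1}},(b_i^{(k)})^{a^k},h]\bigr)$ produces $[a,b_i^{(k)},xb_j^{(l)}]$ in the first coordinate and the discrepancy $[a,b_i^{(k)},x]$ is absorbed into $\gamma_3(G_k)$, which is already available from the single-family case. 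Constructing such an $h$ requires the full case analysis underlying Lemma~\ref{rowechelon}: separate arguments for $r_k=1$, for $r_k\ge 2$ with $(r_k,p)\ne(2,3)$ (including an exceptional subcase where the product condition on consecutive entries fails), and for $(r_k,p)=(2,3)$. Without some mechanism of this kind, the subgroup you generate in a fixed coordinate is only the normal closure of the single-family pieces and the easy mixed terms, which need not contain all of $\gamma_3(G)$; so the argument as written does not close.
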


\begin{proof} 
  By spherical transitivity, it suffices to show that
  \[
  \gamma_3(G) \times 1\times \ldots \times 1 \subseteq
  \psi_1(\gamma_3(\mathrm{Stab}_G(1))).
  \]
  Observe that $\gamma_3(G)$ is generated as a normal subgroup by
  commutators $[g_1,g_2,g_3]$ of elements $g_1, g_2, g_3$
  ranging over the generating set
  $\{a\} \cup \{ b^{(j)}_i \mid 1 \leq j \leq p, \, 1 \leq i \leq r_j
  \}$.
  For each $j \in \{1,\ldots,p\}$ with $r_j \not = 0$, the subgroup
  $G_j = \langle a, \mathbf{b}^{(j)} \rangle \leq G$ is a multi-edge
  spinal group, and \cite[Proposition~3.5]{Paper} shows that
  \begin{equation*}
    \gamma_3(G_j) \times \overset{p}{\ldots} \times \gamma_3(G_j)
    \subseteq \psi_1(\gamma_3(\mathrm{Stab}_{G_j}(1))) \subseteq
    \psi_1(\gamma_3(\mathrm{Stab}_G(1))).
  \end{equation*}
  Hence, it suffices to prove, for $k,l,m \in \{1,\ldots,p\}$ with
  $k \not = l$ and any given
  $c_j \in \{ b^{(j)}_1, \ldots, b^{(j)}_{r_j} \}$, $j \in \{k,l,m\}$,
  the elements
  \begin{equation} \label{equ:elements} ([a, c_k,c_l],1,\ldots,1) ,
    \quad ([c_k,c_l, a],1,\ldots, 1), \quad ([c_k,c_l,c_m],1,\ldots,1)
  \end{equation}
  are contained in $\psi_1(\gamma_3(\mathrm{Stab}_G(1)))$.
 
  First, we observe that
  \[
  ([c_k,c_l,c_m],1,\ldots,1) = \psi_1 \big( [c_k^{\, a^k}, c_l^{\,
    a^l}, c_m^{\, a^m}] \big) \in
  \psi_1(\gamma_3(\mathrm{Stab}_G(1))).
  \]
  Similarly, as
  $\psi_1([c_k^{\, a^k},c_l^{\, a^l}]) = ([c_k,c_l],1,\ldots,1)$, we
  can take $d \in \mathrm{Stab}_G(1)$ such that
  $\psi_1(d) = (a,*,\ldots, *)$, where the symbols $*$ denote
  unspecified elements, to deduce that
  \[
  ([c_k,c_l,a],1,\ldots,1) = \psi_1 \big( [c_k^{\, a^k}, c_l^{\,
    a^l},d] \big) \in \psi_1(\gamma_3(\mathrm{Stab}_G(1))).
  \]

  It remains to deal with the first type of commutator
  $([a,c_k,c_l],1,\ldots,1)$ listed in~\eqref{equ:elements}.  Working
  with a fixed $k \in \{1,\ldots, p\}$, but still allowing for
  modifications of the specific generators, we are free to conjugate
  by an element $f \in \mathrm{Aut}(T)$ as in Lemma~\ref{dagger}, and
  without loss of generality we may assume that the defining vectors
  $\mathbf{E}^{(k)}$ of the generators $\mathbf{b}^{(k)}$ have the
  form of $\tilde{\mathbf{E}}^{(k)}$ described in
  Lemma~\ref{rowechelon}.

  Let us specify $c_k = b^{(k)}_i$ and $c_l = b^{(l)}_j$, where
  $i \in \{1,\ldots, r_k\}$ and $j\in \{1, \ldots, r_l\}$.  Moreover,
  as we have already seen that
  $\gamma_3(G_k) \times 1 \times \ldots \times 1 \subseteq
  \psi_1(\gamma_3(\mathrm{Stab}_G(1)))$,
  it suffices to prove that there exists $x \in G_k$ such that
 \begin{multline*}
   \big( [a,b^{(k)}_i,b^{(l)}_j] [a,b^{(k)}_i,x]^{b^{(l)}_j}
   ,1,\ldots,1 \big) \\
   = \big( [a,b^{(k)}_i,xb^{(l)}_j],1,\ldots,1 \big) \in
   \psi_1(\gamma_3(\mathrm{Stab}_G(1))).
 \end{multline*}
 Arguing similar to~\cite[Proof of Proposition~3.5]{Paper}, we
 distinguish between three situations.

 \medskip

 \noindent \underbar{Case 1}: $r_k = 1$, and thus $i=1$.  Observing that
 \[
 h_j = \big( (b_1^{(k)})^{a^{k-2}} \big)^{-s}
 (b_j^{(l)})^{a^l}=(xb_j^{(l)},*,\ldots,*,1)
 \]
 for $s=e_{j,p-1}^{(l)}$ and $x\in G_k$, we obtain
 \[
 \big( \big[ a, b_1^{(k)}, xb_j^{(l)} \big], 1, \ldots, 1 \big) =
 \psi_1 \big( \big[(b_1^{(k)})^{a^{k-1}}, (b_1^{(k)})^{a^k}, h_j \big]
 \big) \in \psi_1(\gamma_3(\mathrm{Stab}_G(1))).
 \]

 \medskip

 \noindent \underbar{Case 2}: $r_k > 1$ and $(r_k,p) \ne (2,3)$.  By
 properties (3) and (4) of Lemma~\ref{rowechelon} there exists
 $m \in \{2,\ldots, p-2\} $ such that
 $e^{(k)}_{i,m -1} e^{(k)}_{i,m+1} \ne \big( e^{(k)}_{i,m}
 \big)^2$;
 apart from an exceptional case which only occurs for $r_k = 2$ to be
 dealt with below.  We set
 \[
 g_{i,m} = \Big( \big( b^{(k)}_i \big)^{a^{k-m}} \Big)^{e^{(k)}_{i,m}}
 \Big( \big(b^{(k)}_i \big)^{a^{k-m-1}} \Big)^{-e^{(k)}_{i,m-1}}
 \]
 which gives
 \[
 \psi_1(g_{i,m}) = \big( a^{\big( e^{(k)}_{i,m} \big)^2 -
   e^{(k)}_{i,m-1} e^{(k)}_{i,m+1}}, *, \ldots, *, 1 \big),
 \]
 where, in this case, the unspecified elements $*$ lie in $\langle a, b_i^{(k)}\rangle \le G_k$.  Since the first entry is non-trivial,
 there is a power $g_i$ of $g_{i,m}$ such that
 \[
 \psi_1(g_i) = (a,y,*,\ldots,*,1), \qquad \text{where $y \in G_k$.}
 \]
 Motivated by
 \[
 \psi_1 \Big( \big(b^{(l)}_j \big)^{a^l} \Big) = \big(
 b^{(l)}_j,*,\ldots,*, a^s \big), \qquad \text{where $s =
   e^{(l)}_{j,p-1}$,}
 \]
 and 
 \[\psi_1 \big( \big(g_i^{a^{-1}} \big)^{-s} \big) =
 (x,*,\ldots,*,1,a^{-s}) \qquad \text{where $x=y^{-s}$,} 
 \]
 we define $h_j = \big( g_i^{a^{-1}}
 \big)^{-s} \big( b^{(l)}_j \big)^{a^l}$ so that
 \[
 \psi_1(h_j) = \big( xb^{(l)}_j,*,\ldots,*,1 \big).
 \]

 As $e^{(k)}_{i,1} = 1$, it follows that
 \[
 \big( [a,b^{(k)}_i,x b^{(l)}_j],1,\ldots,1 \big) = \psi_1 \big(
 \big[(b^{(k)}_i)^{a^{k-1}},\big( b^{(k)}_i \big)^{a^k},h_j \big] \big) \in
 \psi_1(\gamma_3(\mathrm{Stab}_G(1)))
 \]
 and so we are done.

 It remains to deal with the exceptional case which occurs only for
 $r_k = 2$, and hence $p>3$.  According to property (3b) of
 Lemma~\ref{rowechelon},
 \[
 \mathbf{e}^{(k)}_1 = (1,0,\ldots,0), \quad
 \mathbf{e}^{(k)}_2=(1,0,\ldots,0,1),
 \]
 therefore
 \[
 \psi_1\big(b^{(k)}_1\big)^{a^{k-1}} = (a,1,\ldots,1,b^{(k)}_1), \quad
 \psi_1\big(b^{(k)}_2 \big)^{a^{k-1}}= (a,1,\ldots,1,a,b^{(k)}_2).
 \]
 In order to show that $([a,b^{(k)}_1,b^{(l)}_j],1,\ldots,1)$ and
 $([a,b^{(k)}_2,b^{(l)}_j],1,\ldots,1)$ are contained in
 $\psi_1(\gamma_3(\mathrm{Stab}_G(1)))$, it suffices to replace $g_i$
 in the generic argument given above by $(b^{(k)}_2)^{a^{k+1}}$ in
 both cases.

 In fact, for the first element we can also argue
 directly as follows: from the relation
 $\psi_1 \big( [(b^{(k)}_1)^{a^{k-1}},(b^{(k)}_1)^{a^k}] \big) =
 ([a,b^{(k)}_1],1,\ldots,1)$ we deduce
 \begin{multline*}
   \big( [a,b^{(k)}_1,b^{(l)}_j],1,\ldots,1 \big) = \psi_1 \big(
   \big[(b^{(k)}_1)^{a^{k-1}},\big( b^{(k)}_1 \big)^{a^k},
   \big(b^{(l)}_j\big)^{a^l}
   \big] \big) \\
   \in \psi_1(\gamma_3(\mathrm{Stab}_G(1))).
 \end{multline*}

 \medskip

 \noindent \underbar{Case 3}: $(r_k,p) = (2,3)$.  By property (2) of
 Lemma~\ref{rowechelon}, we may assume that
 $\mathbf{e}^{(k)}_1 = (1,0)$ and $\mathbf{e}^{(k)}_2 = (1,-1)$ so
 that
 \[
 \psi_1\big((b^{(k)}_1)^{a^{k-1}}\big) = (a,1,b^{(k)}_1) \quad
 \text{and} \quad \psi_1\big((b^{(k)}_2)^{a^{k-1}}\big) =
 (a,a^{-1},b^{(k)}_2).
 \]
 Setting
   $h = \big( (b_1^{(k)})^{a^{k-2}} \big)^{-s} \big( b_j^{(l)}
   \big)^{a^l}$
   for $s=e^{(l)}_{j,p-1}$, we obtain $\psi_1(h) = (b_j^{(l)},*,1) $
   and
 \begin{align*}
   \big( \big[a, b_1^{(k)}, b_j^{(l)} \big], 1,  1 \big) & = \psi_1
                                                                  \big(
                                                                  \big[ (b_1^{(k)})^{a^{k-1}},
                                                                  (b_1^{(k)})^{a^k},
                                                                  (b_j^{(l)})^{a^l}
                                                                  \big]
                                                                  \big),
   \\ 
   \big( \big[a, b_2^{(k)}, b_j^{(l)} \big],1,1 \big) & =
                                                               \psi_1 \big( \big[ (b_1^{(k)})^{a^{k-1}},
                                                               (b_2^{(k)})^{a^k},
                                                               h \big]
                                                               \big) 
 \end{align*}
 so that both elements lie in $\psi_1(\gamma_3(\mathrm{Stab}_G(1)))$.
\end{proof}

Next we record the following result regarding the derived
subgroup $G'$ of~$G$, based on the extra assumption that there are
sufficiently many non-symmetric defining vectors.

\begin{proposition} \label{derived} Let
  $G = \langle a, \mathbf{b}^{(1)},\ldots , \mathbf{b}^{(p)}\rangle
  \in \mathscr{C}$
  be in standard form and such that every non-empty family
  $\mathbf{b}^{(j)}$, $j \in \{1, \ldots, p\}$, features at least one
  non-symmetric defining vector.  Then
  \[
  \psi_1(\textup{Stab}_G(1)')=G'\times \overset{p}{\ldots}\times G'.
  \]
  In particular,
  $G'\times \overset{p}{\ldots}\times G' \subseteq \psi_1(G')$,
  and  $G$ is regular branch over $G'$.
\end{proposition}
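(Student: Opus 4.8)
The plan is to mirror the proof of Proposition~\ref{sym}, but to push the section computations one step further down the lower central series, using the non-symmetry hypothesis to reach $G'$ rather than $\gamma_3(G)$. The inclusion $\psi_1(\mathrm{Stab}_G(1)') \subseteq G' \times \cdots \times G'$ is immediate from fractality: every coordinate projection of $\psi_1(\mathrm{Stab}_G(1))$ lands in $G$, so $\psi_1(\mathrm{Stab}_G(1)) \subseteq G \times \cdots \times G$ and its derived subgroup lies in $(G \times \cdots \times G)' = G' \times \cdots \times G'$. For the reverse inclusion I would set $N = \{ g \in G \mid (g,1,\ldots,1) \in \psi_1(\mathrm{Stab}_G(1)') \}$. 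Choosing, for each $h \in G$, an element $\hat h \in \mathrm{Stab}_G(1)$ with $\psi_1(\hat h) = (h,*,\ldots,*)$ (fractality) and using $\mathrm{Stab}_G(1)' \trianglelefteq \mathrm{Stab}_G(1)$, conjugation of $(g,1,\ldots,1)$ by $\psi_1(\hat h)$ shows $N \trianglelefteq G$. Hence it suffices to prove that $N$ contains $[g_1,g_2]$ for all generators $g_1,g_2 \in \{a\} \cup \{b^{(j)}_i\}$; spherical transitivity then promotes $G' \times 1 \times \cdots \times 1 \subseteq \psi_1(\mathrm{Stab}_G(1)')$ to the full product. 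Since a non-symmetric vector is in particular non-constant, the hypothesis places $G$ in $\mathscr{C}_\mathrm{reg}$, so Proposition~\ref{sym} applies and yields $\gamma_3(G) \times \cdots \times \gamma_3(G) = \psi_1(\gamma_3(\mathrm{Stab}_G(1))) \subseteq \psi_1(\mathrm{Stab}_G(1)')$; this is what will let me carry out the remaining computations modulo $\gamma_3(G)$ in each coordinate.

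The commutators of two directed generators are easy: conjugating $b^{(k)}_i$ and $b^{(l)}_{i'}$ by suitable powers of $a$ places both directed sections in the same coordinate while leaving only powers of $a$ elsewhere, so that $\psi_1\bigl([ (b^{(k)}_i)^{a^{k}}, (b^{(l)}_{i'})^{a^{l}} ]\bigr) = ([b^{(k)}_i, b^{(l)}_{i'}],1,\ldots,1) \in \psi_1(\mathrm{Stab}_G(1)')$, and within a family these commutators vanish. The main obstacle is the mixed commutator $[a,b^{(j)}_i]$, and this is exactly where non-symmetry enters. Fixing $b = b^{(j)}_i$ with defining vector $\mathbf{e}=(e_1,\ldots,e_{p-1})$, a direct section computation shows that, reduced modulo $\gamma_3(G)$ in each coordinate, $\psi_1([b,b^{a^s}])$ is supported on just two coordinates, carrying the multiples $-e_{p-s}$ and $e_s$ of $[a,b]$. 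Encoding the $p$ coordinates as the group algebra $R=(\mathbb{Z}/p\mathbb{Z})[x]/(x^p-1)=(\mathbb{Z}/p\mathbb{Z})[x]/(x-1)^p$, and using that the $a$-conjugates of these commutators (still lying in $\mathrm{Stab}_G(1)'$) realise all cyclic shifts, the span of the available elements is $[a,b]$ times the ideal $I \trianglelefteq R$ generated by the polynomials $e_s x^s - e_{p-s}$, $s \in \{1,\ldots,p-1\}$. As $R$ is a chain ring with unique maximal ideal $(x-1)$, one has $I=R$ precisely when some generator is a unit, i.e.\ when $e_s - e_{p-s} \neq 0$ for some $s$, which is exactly non-symmetry of $\mathbf{e}$. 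In that case $([a,b],1,\ldots,1)$ is realised modulo $\gamma_3(G)$, and multiplying by a suitable element of $\gamma_3(G)\times\cdots\times\gamma_3(G)\subseteq\psi_1(\mathrm{Stab}_G(1)')$ makes it exact; if instead $[a,b]\in\gamma_3(G)$ the claim is already contained in Proposition~\ref{sym}.

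It remains to pass from the one non-symmetric member guaranteed in each family to \emph{every} generator $b^{(j)}_i$. Here I would rerun the computation with a product $b=\prod_i (b^{(j)}_i)^{c_i}$ in place of a single generator: since the family commutes, such $b$ is again directed along the same path, with defining vector $\sum_i c_i \mathbf{e}^{(j)}_i$ and $[a,b]\equiv\sum_i c_i[a,b^{(j)}_i]$ modulo $\gamma_3(G)$, so the two-coordinate analysis applies verbatim. The vector $\sum_i c_i \mathbf{e}^{(j)}_i$ is non-symmetric for every $c$ outside the subspace $W=\{c \mid \sum_i c_i(e^{(j)}_{i,m}-e^{(j)}_{i,p-m})=0 \text{ for all } m\}$, which is proper exactly because the family contains a non-symmetric vector. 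As $p\geq 3$, the complement of a proper subspace spans, so the elements $(\sum_i c_i[a,b^{(j)}_i],1,\ldots,1)$ with $c\notin W$ generate a subgroup containing each $([a,b^{(j)}_i],1,\ldots,1)$ modulo $\gamma_3(G)$; lifting as before puts all of them in $\psi_1(\mathrm{Stab}_G(1)')$. This gives $N\supseteq G'$ and hence $\psi_1(\mathrm{Stab}_G(1)')=G'\times\cdots\times G'$. Finally, $\mathrm{Stab}_G(1)'\subseteq G'$ gives $G'\times\cdots\times G'\subseteq\psi_1(G')$, and together with $1\ne G'\leq\mathrm{Stab}_G(1)$, the finiteness of $\lvert G:G'\rvert$ (the generators have order $p$, so $G/G'$ is a finite elementary abelian $p$-group), and fractality of $G$, this is precisely the statement that $G$ is regular branch over $G'$. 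The delicate points are the chain-ring reduction converting \emph{non-symmetry} into the exact condition $I=R$ for isolating a single coordinate, and the subspace argument extending the result across each whole family.
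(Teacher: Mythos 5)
Your proposal is correct and follows essentially the same strategy as the paper: reduce to normal generators, obtain the cross-family commutators $([c_k,c_l],1,\ldots,1)$ by conjugating into a common coordinate, and derive $([a,b^{(j)}_i],1,\ldots,1)$ from the non-symmetry of a defining vector in each family, extending across the family by a change of basis avoiding the symmetric subspace (the paper phrases this as ``without loss of generality all defining vectors of $\mathbf{b}^{(j)}$ are non-symmetric''). The only real difference is that where the paper simply cites \cite[Lemma~3.4]{Amaia} for the GGS-subgroups $\langle a, b^{(j)}_i\rangle$, you re-derive that lemma inline via the two-coordinate computation modulo $\gamma_3(G)$, the local-ring argument in $(\mathbb{Z}/p\mathbb{Z})[x]/(x-1)^p$, and an appeal to Proposition~\ref{sym} to upgrade the congruence to an exact statement --- a self-contained but not essentially different route.
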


\begin{proof}
  By spherical transitivity, it suffices to show that 
  \[
  G' \times 1\times \ldots \times 1 \subseteq
  \psi_1(\mathrm{Stab}_G(1)').
  \]
  Observe that $G'$ is generated as a normal subgroup by commutators
  $[g_1,g_2]$, where $g_1, g_2$ range over the generating
  set
  $\{a\} \cup \{ b^{(j)}_i \mid 1 \leq j \leq p, \, 1 \leq i \leq r_j
  \}$.

  For each $j \in \{1,\ldots,p\}$ with $r_j \not = 0$, we consider the
  associated multi-edge spinal group
  $G_{j} = \langle a, \mathbf{b}^{(j)}\rangle$.  Without loss of
  generality we may assume that all defining vectors for the family
  $\mathbf{b}^{(j)}$ are non-symmetric.  The $r_j$ subgroups
  \[
  H_j(1) = \langle a, b_1^{(j)} \rangle, \quad \ldots\quad , \quad H_j(r_j)
  = \langle a, b_{r_j}^{(j)} \rangle
  \]
  of $G_{j}$ are GGS-groups and satisfy the corresponding statement to
  our claim; see~\cite[Lemma~3.4]{Amaia}.  Consequently,
 \[
 ([a,b_i^{(j)}], 1, \ldots,  1) \in \psi_1(\mathrm{Stab}_{H_j(i)}(1)')
 \subseteq \psi_1(\mathrm{Stab}_G(1)') \quad \text{for $1\le i\le r_j$.}
 \]

 Hence it suffices to observe that, for $k,l \in \{1,\ldots,p\}$ with
 $k \not = l$ and any given
 $c_j \in \{ b^{(j)}_1, \ldots, b^{(j)}_{r_j} \}$, $j \in \{k,l\}$,
 the element
 $([c_k,c_l],1,\ldots,1) = \psi_1([c_k^{\, a^{k}}, c_l^{\, a^{l}}])$ is
 contained in $\psi_1(\mathrm{Stab}_G(1)')$.
\end{proof}

\begin{corollary}
  The groups in $\mathscr{C}_\mathrm{reg}$ are branch,
  and the torsion groups in $\mathscr{C}$ are just
  infinite.
\end{corollary}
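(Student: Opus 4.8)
The plan is to handle the two assertions separately, obtaining the branch property at once from Proposition~\ref{sym} and then reducing the just-infinite claim to it. For the first assertion, Proposition~\ref{sym} tells us that a group $G \in \mathscr{C}_\mathrm{reg}$ in standard form is regular branch over $\gamma_3(G)$; in particular $G$ is fractal, $\lvert G : \gamma_3(G)\rvert < \infty$, and $\gamma_3(G) \times \cdots \times \gamma_3(G) \subseteq \psi_1(\gamma_3(G))$. Iterating the last containment shows that $\mathrm{Rstab}_G(n)$ contains, in each of the $p^n$ coordinates at level~$n$, a copy of $\gamma_3(G)$; since $\lvert G : \gamma_3(G)\rvert < \infty$ this forces $\mathrm{Rstab}_G(n)$ to have finite index in $G$, so $G$ is branch. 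As $\mathscr{C}_\mathrm{reg}$ is defined up to conjugacy in $\mathrm{Aut}(T)$ and conjugation by a tree automorphism maps rigid stabilisers to rigid stabilisers, every group in $\mathscr{C}_\mathrm{reg}$ is branch.

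For the second assertion I would first verify that every torsion group $G \in \mathscr{C}$ already lies in $\mathscr{C}_\mathrm{reg}$, so that the branch property applies. Passing to standard form (torsion being a conjugacy invariant), suppose to the contrary that some non-empty family $\mathbf{b}^{(j)}$ consisted entirely of constant defining vectors. Because these defining vectors are $(\mathbb{Z}/p\mathbb{Z})$-linearly independent while the constant vectors form a one-dimensional subspace of $(\mathbb{Z}/p\mathbb{Z})^{p-1}$, this forces $r_j = 1$ and $\mathbf{e}^{(j)}_1 = (\alpha, \ldots, \alpha)$ for some $\alpha \ne 0$. Then $\langle a, b^{(j)}_1\rangle$ is a GGS-group whose defining vector has coordinate sum $(p-1)\alpha = -\alpha \ne 0$, so it is not torsion by the criterion of Rozhkov and Vovkivsky (see~\cite{Rozhkov,Vovkivsky} and Example~\ref{exa:extended-G-S}); this contradicts $G$ being torsion, as subgroups of torsion groups are torsion. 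Hence $G \in \mathscr{C}_\mathrm{reg}$ and, by the first part, $G$ is branch and regular branch over $\gamma_3(G)$.

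To conclude that such a torsion $G$ is just infinite, I would appeal to the standard criterion for branch groups (see~\cite{NewHorizons}): every non-trivial normal subgroup $N \trianglelefteq G$ contains $\mathrm{Rstab}_G(n)' = [\mathrm{Rstab}_G(n), \mathrm{Rstab}_G(n)]$ for some~$n$. It then remains to check that $\mathrm{Rstab}_G(n)'$ has finite index in $G$. The regular branch structure over $\gamma_3(G)$ sandwiches the lower companion groups as $\gamma_3(G) \le L_n^G \le G$ --- the lower containment from the regular branch property, the upper one because fractality gives $U_n^G = G$ --- so $L_n^G$ has finite index in the finitely generated group $G$ and is itself finitely generated. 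As a subgroup of the torsion group $G$ it is torsion, whence its abelianisation is a finitely generated torsion abelian group and therefore finite. Since $\mathrm{Rstab}_G(n)$ is the direct product of $p^n$ rigid vertex stabilisers, each isomorphic to $L_n^G$, its abelianisation is finite as well; thus $\mathrm{Rstab}_G(n)'$ has finite index in $\mathrm{Rstab}_G(n)$, and hence in $G$. Consequently $\lvert G : N\rvert < \infty$ for every non-trivial normal subgroup, i.e.\ $G$ is just infinite.

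The step I expect to be the crux, and the only place where torsion is genuinely used, is the finite-index conclusion for $\mathrm{Rstab}_G(n)'$: passing from the general branch criterion to the finiteness of the abelianisations of the rigid level stabilisers, and then extracting that finiteness from finite generation together with torsion. The preliminary reduction of an arbitrary torsion group in $\mathscr{C}$ into $\mathscr{C}_\mathrm{reg}$ is a subsidiary but indispensable point, since the branch property has so far been secured only for $\mathscr{C}_\mathrm{reg}$.
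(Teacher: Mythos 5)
Your proposal is correct and follows the same route as the paper: Proposition~\ref{sym} gives the branch property, the Rozhkov--Vovkivsky torsion criterion (applied to a GGS-subgroup with constant defining vector) places torsion groups in $\mathscr{C}_\mathrm{reg}$, and the standard fact that finitely generated torsion branch groups are just infinite finishes the argument. The only difference is that you spell out in full the steps the paper delegates to citations (\cite{Paper}, \cite{Rozhkov}, \cite{Vovkivsky}, \cite{NewHorizons}), and those details are accurate.
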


\begin{proof}
  From Proposition~\ref{sym}, it follows similar to
  \cite[Propositions~3.6 and~3.7]{Paper} that the groups in
  $\mathscr{C}_\mathrm{reg}$ are branch.  

  As indicated in Example~\ref{exa:extended-G-S}, torsion groups in
  $\mathscr{C}$ are already in $\mathscr{C}_\mathrm{reg}$;
  see~\cite[Theorem~2]{Rozhkov} and \cite[Theorem~1]{Vovkivsky}.
  Finally, finitely generated torsion branch groups are just infinite;
  see~\cite[Section~7]{NewHorizons}.
\end{proof}

Hence we have established part (1) of Theorem~\ref{thm:main-result}.

\bigskip

We end this section by proving that the branch groups in $\mathscr{C}$
have an essentially unique `branch action'. For
  vertices $u,v$ of $T$ we write $u \le v$ if $u$ precedes~$v$, and
  $u < v$ for $u \le v$, but $u \ne v$.  In~\cite{GrigWils2},
  Grigorchuk and Wilson (using a different notational convention)
  introduced the following condition on a branch group $G$ acting on a
  regular $p$-adic tree:
\begin{itemize}
\item[$(\dagger)$] Whenever $u,u',v$ are vertices of $T$ such that
  $u,u'$ are incomparable and $u<v$, there exists $g\in G$ such that
  $(u')^g = u'$, but~$v^g \not = v$.
\end{itemize}

With this we state \cite[Theorem~1]{GrigWils2}, for branch groups
acting on regular $p$-adic rooted trees.

\begin{theorem}[Grigorchuk, Wilson] \label{Theorem0.1} Let $G$ be a
  branch group acting on a regular $p$-adic rooted tree $T$ and
  suppose that~$(\dagger)$ holds.  Let $T'$ be any other spherically
  homogeneous rooted tree on which $G$ acts as a branch group.  Then
  there is a $G$-equivariant isomorphism from $T'$ to a tree obtained
  from $T$ by deletion of layers.
\end{theorem}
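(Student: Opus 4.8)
The plan is to reconstruct the tree, up to deletion of layers, intrinsically from the abstract group $G$ together with its rigid stabiliser subgroups, and then to compare the data extracted from the two branch actions, on $T$ and on $T'$. The starting point is that for a branch action the rigid vertex stabilisers $\mathrm{Rstab}_G(u)$ carry essentially all the relevant information: if $u,u'$ are incomparable then $\mathrm{Rstab}_G(u)$ and $\mathrm{Rstab}_G(u')$ commute and meet trivially, whereas $u<v$ forces $\mathrm{Rstab}_G(v)\leq\mathrm{Rstab}_G(u)$. First I would record these order- and commutation-relations and observe that condition $(\dagger)$ guarantees that each $\mathrm{Rstab}_G(u)$ is non-trivial and genuinely acts on the subtree $T_u$: given $u<v$ and an incomparable $u'$, the element fixing $u'$ but moving $v$ witnesses that the activity below $u$ is not forced by the behaviour on the rest of the tree. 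This is what will allow me to treat the rigid stabilisers, up to commensurability, as an invariant of the abstract group $G$.

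Next I would assemble the commensurability classes of the subgroups generated by the rigid stabilisers under the lattice operations of product and intersection into a structure lattice $\mathcal{L}(G)$, on which $G$ acts by conjugation. The crucial point is that $\mathcal{L}(G)$, together with its $G$-action, is defined purely group-theoretically, with no reference to any tree. The two branch actions therefore produce the same lattice $\mathcal{L}(G)$ with the same $G$-action; what each action contributes in addition is merely a distinguished descending \emph{level filtration}, whose $n$th term is spanned by the classes of the rigid stabilisers of the vertices at level~$n$. Recovering the vertex set of each tree from its filtration, with the order relation read off from containment of classes, is the technical heart of the argument.

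The deletion of layers then emerges from comparing the two filtrations inside the common lattice. Here I would exploit that $T$ is regular $p$-adic: each rigid stabiliser at level~$n$ contains exactly $p$ rigid stabilisers of its children, and the stabiliser of the vertex permutes these children transitively; since $p$ is prime, a transitive action on $p$ points admits no non-trivial block system, so this $p$-fold step cannot be refined $G$-equivariantly. Hence the filtration coming from $T$ is unrefinable between consecutive levels, and the spherically homogeneous filtration coming from $T'$, living in the same lattice, must coincide with a sub-chain of it. Choosing the levels of $T$ that survive is exactly a deletion of layers, and transporting the recovered order relation across the matched levels yields the desired $G$-equivariant isomorphism from $T'$ onto $T$ with the deleted layers contracted.

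The step I expect to be the main obstacle is the intrinsic, group-theoretic characterisation of the rigid stabilisers up to commensurability, and the attendant proof that the vertex poset and its order can be read back off from $\mathcal{L}(G)$ alone. Arranging for condition $(\dagger)$ to do precisely this work---so that the assignment of a commensurability class of rigid stabilisers to each vertex has the controlled fibres required, no two essentially different levels are accidentally identified, and the only collapses are the intended layer-contractions---is the delicate part; the commutation and order relations noted above are routine by comparison.
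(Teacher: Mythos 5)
This theorem is not proved in the paper at all: it is imported verbatim as \cite[Theorem~1]{GrigWils2} (Grigorchuk--Wilson, \emph{Geom.\ Dedicata} \textbf{100} (2003)), so there is no in-paper argument to compare against. Measured against the actual Grigorchuk--Wilson proof, your plan does point at the right machinery --- the structure lattice of commensurability classes generated by rigid stabilisers, with the tree recovered as a poset of lattice elements and the second action compared inside the same lattice --- but what you have written is a programme, not a proof. Every decisive step is asserted rather than established: that the commensurability classes of the subgroups generated by rigid stabilisers form a well-defined lattice invariant of the abstract group; that the map from vertices to classes is injective and order-reflecting; that the rigid stabilisers of an \emph{arbitrary} second branch action land inside this same lattice in a way comparable with the filtration coming from $T$; and, above all, how condition $(\dagger)$ enters. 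You yourself flag the intrinsic characterisation of the rigid stabilisers up to commensurability as ``the main obstacle'' and ``the delicate part'' --- but that is precisely the content of the theorem, and the one-line gloss you offer for $(\dagger)$ (an element fixing $u'$ but moving $v$ ``witnesses that the activity below $u$ is not forced'') does not derive any of the needed lattice properties. In Grigorchuk--Wilson the condition is used to rule out pathological atoms in the structure lattice so that distinct vertices yield distinct classes and no spurious $G$-invariant refinements appear; without carrying that out, your subsequent steps have nothing to stand on.

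One further caution: your unrefinability argument (``$p$ is prime, so a transitive action on $p$ points has no non-trivial block system, hence the level-to-level step cannot be refined'') only shows that $T'$ cannot interpolate \emph{strictly between} consecutive layers of $T$ \emph{once you already know} that the $T'$-levels correspond to lattice elements comparable with the $T$-levels. Establishing that comparability --- i.e.\ that the second filtration is a sub-chain of the first rather than a transversal or incomparable chain in the lattice --- is again the missing core, not a consequence of primality. As it stands the proposal identifies the correct strategy but contains a genuine gap at its centre; to repair it you would essentially have to reproduce the basal-subgroup/structure-lattice analysis of \cite{GrigWils2}.
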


This motivates us to prove the following result on GGS-groups.

\begin{proposition} \label{condition**} Let $G=\langle a,b \rangle$ be
  a GGS-group, acting on a regular $p$-adic tree $T$ for~$p\ge 5$.
  Suppose further that the defining vector $(e_1,\ldots, e_{p-1})$ of
  the directed automorphism $b$ is non-constant and that all its
  entries are non-zero.  Then $G$ satisfies condition~$(\dagger)$.
\end{proposition}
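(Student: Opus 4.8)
The plan is to verify condition~$(\dagger)$ directly, reducing it by the self-similar geometry of~$T$ to a concrete construction inside the first level stabiliser, and then feeding that construction with a linear-algebra input extracted from the defining vector. First I would exploit the branch point $y=u\wedge u'$. Since $u,u'$ are incomparable we have $y<u$ and $y<u'$, and the geodesics to $u$ and to $u'$ leave $y$ through distinct children. As $y$ lies on the geodesic from the root to $u$ and $u<v$, the vertex $v$ is a proper descendant of the child of $y$ towards $u$, lying at least two levels below~$y$, whereas $u'$ sits in a disjoint sibling subtree. Because $G$ is fractal, the restriction $\mathrm{Stab}_G(y)\to G$ to the action on $T_y$ is onto, so after passing to images it suffices to treat the case $y=\varnothing$. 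I am thus reduced to the following clean statement: given vertices $P$ (the image of $v$) and $Q$ (the image of $u'$) lying below distinct children $c_P\ne c_Q$ of the root, with $P$ at level $\ge 2$, there is $g\in\mathrm{Stab}_G(1)$ fixing $Q$ and moving $P$.

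The point is that $g\in\mathrm{Stab}_G(1)$ moves $P$ as soon as it induces a non-trivial rooted permutation on the children of $c_P$: as $P$ is a proper descendant of $c_P$, such a ($p$-cycling) action carries $P$ into a different subtree. Recording for each $g\in\mathrm{Stab}_G(1)$ the vector of first-level actions of its $p$ sections gives a homomorphism $\overline{\psi}_1\colon \mathrm{Stab}_G(1)\to(\mathbb{Z}/p\mathbb{Z})^p$, and since $\mathrm{Stab}_G(1)$ is generated by the conjugates $b^{a^{j}}$, its image is the $\langle a\rangle$-invariant subspace $\mathcal{C}$ spanned by the cyclic shifts of $(e_1,\dots,e_{p-1},0)$. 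Inside $\mathbb{F}_p[t]/(t^p-1)=\mathbb{F}_p[t]/(t-1)^p$ this is the ideal $((t-1)^d)$, where $d$ is the multiplicity of $t=1$ as a root of $e_1+e_2t+\dots+e_{p-1}t^{p-2}$; as the $e_i$ are non-zero this polynomial is non-zero of degree $\le p-2$, whence $\dim_{\mathbb{F}_p}\mathcal{C}=p-d\ge 2$. A cyclic code of length $p$ and dimension $\ge 2$ cannot have minimum distance~$p$, by the Singleton bound, so some non-zero codeword has a zero coordinate; as $\mathcal C$ is shift-invariant and $c_P-c_Q\not\equiv 0\pmod p$, it then contains a codeword non-zero in coordinate $c_P$ and zero in coordinate $c_Q$. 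Realising it as $\overline{\psi}_1(g)$ produces $g\in\mathrm{Stab}_G(1)$ that moves $P$ and acts trivially on the children of $c_Q$; in particular $g$ fixes $Q$ whenever $Q$ sits at level $\le 2$.

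For deeper $Q$ I would induct on $|Q|$. Having arranged that $g$ fixes the level-two ancestor of $Q$, I correct $g$ by an element supported in the subtree hanging below that ancestor. Such a correction lies in a rigid vertex stabiliser which, by Proposition~\ref{sym} and fractality, realises the very same cyclic-code pattern one level further down; crucially this subtree is disjoint from $T_{c_P}$, so the correction leaves the motion of~$P$ intact. Iterating over the finitely many vertices on the geodesic to $Q$ pushes the fixing of $Q$ down to its own level and finishes, and Theorem~\ref{Theorem0.1} may then be applied.

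The main obstacle is precisely this inductive step: one must force the rooted action of $g$ to vanish at \emph{every} vertex along the path to $u'$, so that $u'$ is fixed, while retaining a single non-trivial rooted action in the disjoint subtree housing $v$. This is where all three hypotheses enter. The non-constancy of the defining vector and $p\ge 5$ supply, via Proposition~\ref{sym}, the regular branch structure over $\gamma_3(G)$ that furnishes the correcting elements at each level; and the assumption that all entries are non-zero guarantees both that the generators act as non-trivial rooted automorphisms on each first-level subtree and that $\dim_{\mathbb F_p}\mathcal{C}\ge 2$, so that the coordinates above $v$ and above $u'$ can be decoupled.
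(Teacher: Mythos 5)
Your reduction to the branch point and your cyclic-code description of the image of $\overline{\psi}_1$ are correct, but they only control the \emph{rooted} parts of the first-level sections, and this is not enough. The element $g$ you produce has its section at $c_Q$ acting trivially on the children of $c_Q$, so it fixes $u'$ only when $u'$ lies at most two levels below the branch point; for deeper $u'$ your inductive correction step fails. The correcting elements you invoke come from a rigid vertex stabiliser whose sections are (by Proposition~\ref{sym}, which is all you have at your disposal) contained in $\gamma_3(G)\le\mathrm{Stab}_G(1)$; hence they act trivially on the children of the level-two ancestor $w$ of $u'$ and cannot cancel a non-trivial rooted action of the section $g_w$ there. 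Nor do they ``realise the same cyclic-code pattern one level further down'': writing $\mathcal{C}=((t-1)^d)$ in $\mathbb{F}_p[t]/(t-1)^p$, one checks that $\overline{\psi}_1(G')=((t-1)^{d+1})$ and $\overline{\psi}_1(\gamma_3(G))=((t-1)^{d+2})$. For $d=p-2$ --- which does occur under your hypotheses, e.g.\ $e_i=\binom{p-2}{i-1}(-1)^{p-1-i}$, a non-constant vector with all entries non-zero --- the latter ideal is zero, and even $\overline{\psi}_1(G')$ is spanned by the all-ones word, which has no zero coordinate to place over $u'$. So the decoupling you need one level down is simply not available from the branch structure.

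The paper's proof avoids any induction on the depth of $u'$ by arranging that the coordinate of $\psi_1(g)$ above $u'$ is \emph{literally trivial}, not merely trivially rooted: it constructs, in two cases (defining vector a geometric progression or not), explicit elements --- $b(b^{-m})^a$ with $\psi_1=(ab^{-m},1,\dots,1,ba^{-1})$, respectively $g_k=(b^{a^{p-k+1}})^{e_k}(b^{a^{p-k}})^{-e_{k-1}}$ --- every coordinate of which is either trivial or carries a non-trivial power of $a$, and then runs essentially your ``walk around the $\delta$-cycle'' conjugation argument on \emph{that} dichotomy. The hypothesis that all $e_i$ are non-zero is used precisely to force the coordinates containing a $b$ to also carry a non-trivial power of $a$, collapsing ``trivially rooted'' to ``trivial''. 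To salvage your approach you would have to replace the code of rooted parts by this finer trivial/non-trivially-rooted information, at which point the Singleton-bound step becomes superfluous.
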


\begin{proof}
  Let $u,u'$ be incomparable vertices of~$T$.  We denote by
  $\bar{u},\bar{u}'$ the first level vertices satisfying
  $\bar{u} \le u$ and $\bar{u}' \le u'$.  Suppose that
    $\bar{u} = u_i$ and $\bar{u}' = u_j$ for labels
    $i,j \in \{1,\ldots,p\}$.  Applying conjugation by a suitable
  power of~$a$, if necessary, we may assume without loss of generality
  that $i< j$.  Replacing $b$ by a suitable power of itself, if
  necessary, we may further assume that~$e_1=1$.

  \smallskip

  \noindent
  \underline{Case 1}: There exists  $m\ne 1$ such that
  \[
  \psi_1(b)=(a,a^m,a^{m^2},\ldots,a^{m^{p-2}},b).
  \]
  This implies
  \[
  \psi_1(b(b^{-m})^a)=(ab^{-m},1,\ldots,1,ba^{-1}).
  \]

  First suppose that $(i,j)\ne (1,p)$.  Then the element
  $g = (b(b^{-m})^a)^{a^{i-1}} \in \mathrm{Stab}_G(1)$ has the form
  \[
  \psi_1(g) = (1,\ldots,1,ba^{-1},
  \underbrace{ab^{-m}}_{i^{\text{th}} \text{ entry}}, 1,\ldots,1 ).
  \]
  Consequently, $(u')^g = u'$, but no vertex $v$ with
    $v>\bar{u}$ is fixed by $g$ due to the factor $a$ in the $i$th
  entry of~$\psi_1(g)$.  Therefore condition $(\dagger)$ is satisfied.

  In the remaining case $(i,j)=(1,p)$, we instead utilise
  $g = (b(b^{-m})^a)^{a}$ where
  \[
  \psi_1(g)=(ba^{-1},ab^{-m},1,\ldots,1).
  \]

  \smallskip

  \noindent
  \underline{Case 2}: There exists $k \in \{2,\ldots, p-2\}$ such that
  \begin{equation} \label{equ:k-minimal} \frac{e_2}{e_1} = \ldots =
    \frac{e_k}{e_{k-1}} \quad \text{ but } \quad \frac{e_k}{e_{k-1}}
    \ne \frac{e_{k+1}}{e_k}.
  \end{equation}
  Then, as in Case~2 of the proof of Proposition~\ref{sym},  we define
  \[
  g_k = \big( b^{a^{p-k+1}} \big)^{e_k} \big( b^{a^{p-k}}
  \big)^{-e_{k-1}} \in \mathrm{Stab}_G(1)
  \]
  so that
  \[
  \psi_1(g_k)=(a^{f_1},\ldots, a^{f_{p-k-1}},
  a^{f_{p-k}}b^{-e_{k-1}},b^{e_k}a^{f_{p-k+1}},1,\overset{k-1}{\ldots},1),
  \]
  where 
  \begin{align*}
    f_1 & =e_k^2-e_{k-1}e_{k+1} \ne 0, \\
    f_l & = e_ke_{k+l-1 }- e_{k-1}e_{k+l} \quad \text{ for $2 \le l \le
          p-k-1$,} \\
    f_{p-k} & =e_ke_{p-1} \ne 0 \qquad \text{and} \qquad 
              f_{p-k+1}=-e_{k-1} \ne 0.
  \end{align*}
  The first statement follows from~\eqref{equ:k-minimal}, while the
  last two are due to the circumstance that
  $e_1, \ldots, e_{p-1} \ne 0$.

  We now identify a suitable conjugate $g$ of $g_k$.  First we take
  any conjugate $h$ of $g_k$ by a power of $a$ such that the $j$th
  entry of $\psi_1(h)$ does not involve a non-trivial power
  of~$a$. Hence the $j$th entry is trivial by the form of
  $\psi_1(g_k)$.  If the $i$th entry of $\psi_1(h)$ also does not
  involve a non-trivial power of~$a$, then consider $h^{a^l}$, where
  $l=j-i$.  Now the $j$th entry of $\psi_1(h^{a^l})$ is trivial.  If
  the $i$th entry of $\psi_1(h^{a^l})$ does not involve a non-trivial
  power of~$a$, then repeat the process and consider $h^{a^{2l}}$,
  etc.  Since there are entries of $h$ that involve non-trivial powers
  of~$a$, we arrive in this way at $g = h^{a^{ml}}$, for some
  $m\in \{0, 1,\ldots, p-1\}$, such that the $j$th entry of
  $\psi_1(g)$ is trivial, while the $i$th entry involves a non-trivial
  power of~$a$. Thus condition $(\dagger)$ is again satisfied.
\end{proof}

We remark that GGS-groups $G = \langle a,b \rangle$ are branch apart
from when the defining vector of the directed automorphism $b$ is
constant.  In the latter case the group $G$ is only weakly branch,
that is, all rigid stabilisers are non-trivial;
  see~\cite[Theorem~3.7]{GustavoAlejandraJone}
  and~\cite[Lemma~4.2(iii)]{Amaia}.  The next corollary extends the
results in~\cite{GrigWils2} which cover GGS-groups with defining
vector having at least one zero entry, and additionally the
Gupta--Sidki $3$-group.

\begin{corollary} \label{uniqueaction} Let $G\in \mathscr{C}$ be
  branch.  Then the branch action of $G$ is unique in the sense of
  Theorem~\ref{Theorem0.1}.
\end{corollary}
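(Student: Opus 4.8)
The plan is to verify the Grigorchuk--Wilson condition~$(\dagger)$ for $G$ and then invoke Theorem~\ref{Theorem0.1}, which is available since $G$ is branch by hypothesis. First I would note that both condition~$(\dagger)$ and the conclusion of Theorem~\ref{Theorem0.1} are invariant under conjugation in $\mathrm{Aut}(T)$, because such a conjugation merely relabels the vertices of $T$ while respecting the precedence and incomparability relations; hence I may assume that $G$ is in standard form. Since $G$ is branch, and not merely weakly branch, at least one of its directed generators $b = b^{(j)}_i$ has a \emph{non-constant} defining vector $\mathbf{e}^{(j)}_i$: this is what excludes the degenerate constant-vector situation, recalling that a GGS-group with constant defining vector is only weakly branch, as discussed in the remark preceding the statement.

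The crucial reduction is that it suffices to produce the required elements inside the GGS-subgroup $H = \langle a, b \rangle \le G$. Indeed, condition~$(\dagger)$ only asserts the existence of an element $g \in G$ with $(u')^g = u'$ and $v^g \ne v$, and any such $g$ found in $H$ automatically lies in $G$. Moreover, replacing $b^{(j)}_i$ by a suitable $\langle a \rangle$-conjugate $b' = (b^{(j)}_i)^{a^t} \in H$, I may arrange that the directed path of the generator runs along the last branch, so that $H = \langle a, b' \rangle$ is a GGS-group in the standard convention of Example~\ref{exa:multi-edge}; the defining vector of $b'$ is a cyclic shift of $\mathbf{e}^{(j)}_i$, hence has the same pattern of zero and non-zero entries and is again non-constant. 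Thus it remains to verify~$(\dagger)$ for this GGS-group $H$.

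I would finish by a case distinction on the defining vector of $b'$. If it has at least one zero entry, then $(\dagger)$ for $H$ is already established by Grigorchuk and Wilson, as noted in the remark before the statement. If $p = 3$ and the vector has no zero entry, then, up to replacing $b'$ by a power, it is the vector of the Gupta--Sidki $3$-group, a case likewise covered by~\cite{GrigWils2}. Finally, if $p \ge 5$ and all entries are non-zero, then Proposition~\ref{condition**} applies directly and yields~$(\dagger)$ for $H$. In every case the witnessing elements lie in $H \le G$, so $G$ satisfies~$(\dagger)$, and Theorem~\ref{Theorem0.1} gives the asserted uniqueness of the branch action.

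The step I expect to be the main obstacle is the justification, flagged in the first paragraph, that branchness (as opposed to mere weak branchness) forces some directed generator to carry a non-constant defining vector; this is exactly what rules out the constant-vector GGS-subgroups, which need not satisfy~$(\dagger)$ and fall outside the scope of both Proposition~\ref{condition**} and the Grigorchuk--Wilson results. A secondary point requiring care is the claim that a single well-chosen $H$ accommodates all triples $(u,u',v)$ at once; this causes no difficulty, since~$(\dagger)$ is an existential condition checked triple-by-triple, and the constructions underlying Proposition~\ref{condition**} already handle arbitrary first-level vertices $\bar u = u_i$ and $\bar u' = u_j$.
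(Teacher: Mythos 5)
Your proposal is correct and follows essentially the same route as the paper: locate a branch GGS-subgroup $H \le G$ with non-constant defining vector, verify condition $(\dagger)$ for $H$ via Proposition~\ref{condition**} (all entries non-zero, $p\ge 5$) or via the cited Grigorchuk--Wilson results (a zero entry, or the Gupta--Sidki $3$-group), observe that $(\dagger)$ passes from the subgroup $H$ up to $G$, and conclude with Theorem~\ref{Theorem0.1}. The step you flag as the main obstacle --- that branchness forces some directed generator to have a non-constant defining vector --- is in fact glossed over in the paper as well, which simply asserts that $G$ contains a branch GGS-group.
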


\begin{proof}
  Observe that $G$ contains a branch GGS-group.  Thus
  Proposition~\ref{condition**} and \cite[Lemmata~6 and~7]{GrigWils2}
  imply that $G$, too, satisfies condition $(\dagger)$.  Hence the result
  follows by Theorem~\ref{Theorem0.1}.
\end{proof}


\subsection{Length functions and abelianisation} For the proof of part
(2) of Theorem~\ref{thm:main-result} we require certain length
functions on the groups $G \in \mathscr{C}$; as a by-product we pin
down the abelianisation $G/G'$ of~$G$. Fix a group
$G= \langle a,\mathbf{b}^{(1)}, \ldots, \mathbf{b}^{(p)} \rangle \in
\mathscr{C}$ in standard form and consider the free product
\[
\widehat{G} = \langle \hat a\rangle * \langle
\widehat{\mathbf{b}}^{(1)} \rangle * \ldots * \langle
\widehat{\mathbf{b}}^{(p)} \rangle
\]
of elementary abelian $p$-groups $\langle \hat a \rangle \cong C_p$
and
$\langle \widehat{\mathbf{b}}^{(j)} \rangle = \langle \hat b_1^{(j)},
\ldots, b_{r_j}^{(j)} \rangle \cong C_p^{\, r_j}$
for $1 \leq j \leq p$.  Note that there is a unique epimorphism
$\pi \colon \widehat{G} \rightarrow G$ such that $\hat a \mapsto a$
and $\hat b_i^{(j)} \mapsto b_i^{(j)}$ for $1 \leq j \leq p$ and
$1 \leq i \leq r_j$, inducing an epimorphism from
$\widehat{G}/\widehat{G}' \cong C_p^{\, 1 + r_1 + \ldots + r_p}$
onto~$G/G'$.  We want to show that the latter is an isomorphism; see
Proposition~\ref{pro:abelianisation}.
 
By the standard theory of free products of groups, each element
$\hat{g} \in \widehat{G}$ has a unique reduced form
\[
\hat{g} = \hat a^{\alpha_1}\, w_1 \, \hat a^{\alpha_2} \, w_2 \,
\cdots \, \hat a^{\alpha_l} \, w_l \, \hat a^{\alpha_{l+1}},
\]
where $l \in \mathbb{N} \cup \{0\}$,
$w_1,\ldots, w_l \in \langle \widehat{\mathbf{b}}^{(1)} \cup \ldots
\cup \widehat{\mathbf{b}}^{(p)} \rangle \smallsetminus \{1\}$,
and $\alpha_1, \ldots, \alpha_{l+1} \in \mathbb{Z}/p\mathbb{Z}$ such
that $\alpha_i \ne 0$ for $i \in \{2,\ldots,l\}$.
Furthermore, for each $i\in \{1,\ldots, l\}$, the element $w_i$ can be
uniquely expressed as
\[
w_i =  \big( \widehat{\mathbf{b}}^{(k(i,1))} \big)^{\boldsymbol{\beta}(i,1)}
\cdots \big( \widehat{\mathbf{b}}^{(k(i,n_i))} \big)^{\boldsymbol{\beta}(i,n_i)},
\]
where $n_i \in \mathbb{N}$,
$k(i,1), \ldots, k(i,n_i) \in \{1,\ldots, p\}$, with
$k(i,m) \ne k(i,m +1)$ for $1 \le m \le n_i-1$, and
\[
\boldsymbol{\beta}(i,m) = \big( \beta(i,m)_1, \ldots,
\beta(i,m)_{r_{k(i,m)}} \big) \in
(\mathbb{Z}/p\mathbb{Z})^{r_{k(i,m)}} \smallsetminus \{\mathbf{0}\}, \;\;
1 \le m \le n_i,
\]
are exponent vectors so that
\[
\big( \widehat{\mathbf{b}}^{(k(i,m))} \big)^{\boldsymbol{\beta}(i,m)} = \big(
\hat b^{(k(i,m))}_1 \big)^{\beta(i,m)_1} \cdots \big(
\hat b^{(k(i,m))}_{r_{k(i,m)}} \big)^{\beta(i,m)_{r_{k(i,m)}}}.
\]
The \emph{length} of $\hat{g}$ is defined as
\[
\partial(\hat{g}) =  n_1 + \ldots + n_l.
\]

Furthermore, we define \emph{exponent maps} from $\widehat{G}$ to $\mathbb{Z}/p\mathbb{Z}$ by
\begin{align*}
  \varepsilon_{\hat a}(\hat{g}) & = \sum_{m=1}^{l+1} \alpha_m \quad \text{and} \\
  \varepsilon_{\hat b^{(k)}_j}(\hat{g}) & = \sum_{\substack{1 \le i \le l, \, 1
                                          \le m \le n_i \\ \text{s.t. } k(i,m) = k}} \beta(i,m)_j \quad
  \text{for $1 \le k \le p$ and $1 \le j \le  r_k$.} 
\end{align*}
 
The isomorphism $G/[G,G]\cong \widehat{G}/[\widehat{G},\widehat{G}]$
is obtained parallel to \cite[Section~4.1]{Paper}, which uses a
reformulation of Rozhkov~\cite{Rozhkov}.

\begin{proposition} \label{pro:abelianisation} Let $G\in\mathscr{C}$
  be in standard form and $\widehat{G}$ as above.  Then the surjective
  homomorphism
  \[
  \widehat{G} \rightarrow (\mathbb{Z}/p\mathbb{Z}) \times
  \prod_{j=1}^p (\mathbb{Z}/p\mathbb{Z})^{r_j}, \quad \hat{g} \mapsto
  \big( \varepsilon_{\hat a}(\hat{g}), \big( (\varepsilon_{\hat
    b^{(1)}_i}(\hat{g}))_{i=1}^{r_1}, \ldots, (\varepsilon_{\hat
    b^{(p)}_i}(\hat{g}))_{i=1}^{r_p} \big) \big)
  \]
  with kernel $\widehat{G}'$ factors through $G/G'$ and consequently,
  \begin{align*}
    G/G' & \cong \langle \underline{a},  \, \underline{b}^{(1)}_1 , \ldots,
           \underline{b}^{(1)}_{r_1}, \quad \ldots \phantom{x} \ldots,
           \quad \underline{b}^{(p)}_1, 
           \ldots, \underline{b}^{(p)}_{r_p} \rangle \\ 
         & \cong C_p \times \overset{1+r_1+\ldots +r_p}{\ldots} \times C_p. 
  \end{align*}
\end{proposition}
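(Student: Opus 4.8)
The plan is to show that the epimorphism
$\widehat{G}/\widehat{G}' \cong C_p^{\,1+r_1+\ldots+r_p} \twoheadrightarrow G/G'$
induced by $\pi$ is an isomorphism. Since the displayed exponent map $\varepsilon$ is surjective with kernel exactly $\widehat{G}'$, it suffices to prove $\ker\pi \subseteq \widehat{G}'$, i.e.\ that $\varepsilon_{\hat a}$ and every $\varepsilon_{\hat b^{(j)}_i}$ vanish on each relator $\hat g \in \ker\pi$: then $\varepsilon$ descends to a surjection $G/G' \to C_p^{\,1+r_1+\ldots+r_p}$ which is inverse to the map above, forcing both to be isomorphisms. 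The vanishing of $\varepsilon_{\hat a}$ is immediate, as the directed generators lie in $\mathrm{Stab}_G(1)$ while $a$ induces the $p$-cycle on the first layer, so $\pi(\hat g)$ acts on that layer as $a^{\varepsilon_{\hat a}(\hat g)}$; a relator fixes the layer, whence $\varepsilon_{\hat a}(\hat g)=0$. The substance is the vanishing of the $b$-exponents.

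To control these I would first lift the wreath recursion to the free product. Sending $\hat a$ to the cyclic coordinate permutation and each $\hat b^{(j)}_i$ to the tuple prescribed by its defining vector defines a homomorphism $\Psi \colon \widehat{G} \to \widehat{G} \wr C_p$ (well defined because each generator image has order $p$ and the images of one family commute), and $\Psi$ is $\pi$-compatible with the genuine recursion $\psi_1$ of $G$. Restricting to $\widehat{H} = \ker \varepsilon_{\hat a} = \pi^{-1}(\mathrm{Stab}_G(1)) = \Psi^{-1}(\widehat{G} \times \overset{p}{\ldots} \times \widehat{G})$ yields section homomorphisms $\hat g \mapsto \hat g_x$ ($1 \le x \le p$) into $\widehat{G}$, where $\pi(\hat g_x)$ is the $x$th coordinate of $\psi_1(\pi(\hat g))$; in particular the formal sections of a relator are again relators. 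Since $\Psi$ places, for each directed generator, exactly one copy of that generator among the coordinates (the other entries being powers of $\hat a$), a comparison on Schreier generators of $\widehat{H}$ gives the preservation law
\[
\varepsilon_{\hat b^{(j)}_i}(\hat g) = \sum_{x=1}^{p} \varepsilon_{\hat b^{(j)}_i}(\hat g_x) \qquad (\hat g \in \widehat{H}),
\]
with no circularity, as $\varepsilon$ is defined throughout on $\widehat{G}$.

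The vanishing of the $b$-exponents on $\ker\pi$ then follows by induction on the pair $(\partial(\hat g), \ell(\hat g))$, ordered lexicographically, where $\ell(\hat g)=l$ counts the maximal $\hat a$-power segments of the reduced form; note that the recursion only inserts powers of $\hat a$, which carry no $\hat b$-syllables, so $\sum_x \partial(\hat g_x) \le \partial(\hat g)$. The base case $\partial(\hat g)=0$ is the treated $\varepsilon_{\hat a}$. For the inductive step I would exploit that the families have \emph{pairwise disjoint directing paths}: family $\mathbf{b}^{(j)}$ deposits its surviving generator in the single coordinate $p-j+1$ (shifted by the accumulated power of $a$), and these coordinates are distinct for distinct~$j$. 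Hence any configuration in which the $\hat b$-content is not pushed entirely into one coordinate spreads it over at least two coordinates; each section then has strictly smaller $\partial$, and the preservation law closes the induction. In the remaining \emph{concentrated} case every syllable is single-family and lands in one coordinate $x_0$; there the interleaving $\hat a$-powers all fall in other coordinates, so $\hat g_{x_0}$ is a single $\hat a$-free block with $\partial(\hat g_{x_0}) = \partial(\hat g)$ but $\ell(\hat g_{x_0}) = 1 < \ell(\hat g)$, a lexicographic drop; a further section step then spreads the content, unless $\partial(\hat g)=1$, in which case $\hat g$ is conjugate to a single power $(\mathbf{b}^{(k)})^{\boldsymbol{\beta}}$ with $\boldsymbol{\beta} \ne \mathbf{0}$, which cannot be a relator because the defining vectors of $\mathbf{b}^{(k)}$ are linearly independent and so $\langle \mathbf{b}^{(k)} \rangle \cong C_p^{\,r_k}$.

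I expect the concentrated, equal-length case to be the main obstacle: it is exactly the situation Rozhkov's length function is built to handle, and the delicate point is verifying that forcing all $\hat b$-content into one coordinate produces a single block of strictly smaller $\ell$, while the disjointness of the directing paths prevents same-family contributions at different depths from masking a genuine relation. This is where the argument runs parallel to \cite[Section~4.1]{Paper} and the reformulation of \cite{Rozhkov}. Granting it, $\ker\pi \subseteq \widehat{G}'$, so the exponent map factors through $G/G'$ and induces the asserted isomorphism $G/G' \cong C_p \times \overset{1+r_1+\ldots+r_p}{\ldots} \times C_p$.
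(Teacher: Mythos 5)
Your proof is correct and takes essentially the same approach as the paper, which does not write the argument out but defers to the treatment of multi-edge spinal groups and Rozhkov's reformulation: lifting the wreath recursion to the free product $\widehat{G}$, the exponent-preservation law for the section homomorphisms, and the induction on the length function with the concentrated equal-length case collapsing to a single $\hat a$-free block are precisely the cited strategy. I see no gaps; the delicate points (well-definedness of the lift, the case $\partial(\hat g)=1$ via linear independence of the defining vectors, and the lexicographic drop in the concentrated case) are all handled correctly.
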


Let $G \in \mathscr{C}$ and $\pi \colon \widehat{G} \rightarrow G$ be
the natural epimorphism as above. The \emph{length} of $g\in G$ is
\[
\partial(g) = \min \{\partial(\hat{g}) \mid \hat{g}\in \pi^{-1}(g) \}.
\]
Moreover, via Proposition~\ref{pro:abelianisation}, we define
\[
\varepsilon_a(g) = \varepsilon_{\hat a}(\hat{g}) \quad \text{and}
\quad \varepsilon_{b^{(j)}_i}(g) = \varepsilon_{\hat
  b^{(j)}_i}(\hat{g}), \, \text{for $1 \le j \le p$ and
  $1 \le i \le r_j$,}
\]
via any pre-image $\hat{g}\in \pi^{-1}(g)$.

Since $G$ is fractal, every $g \in G$ may be expressed
as
\[
g = \psi_1^{-1}(g_1,\ldots , g_p) \, a^{\varepsilon_a(g)},
\]
where $g_i\in G$ for $1\le i \le p$.  Of course, the decomposition can be
applied repeatedly, yielding, for instance,
$g_i = \psi_1^{-1}(g_{i,1},\ldots, g_{i,p}) \, a^{\varepsilon_a(g_i)}$ for
$1\le i \le p$.

\begin{lemma} \label{lemma4.1} Let
  $G = \langle a, \mathbf{b}^{(1)},\ldots , \mathbf{b}^{(p)}\rangle\in
  \mathscr{C}$
  be in standard form, and let $g\in G$.  Then, using the notation
  introduced above, $\sum_{i=1}^p \partial(g_i) \le \partial(g)$.

  Suppose further that $\partial(g)>1$. Then
  $\partial(g_{i,j}) < \partial(g)$ for all $i,j\in \{1,\ldots,p\}$.
\end{lemma}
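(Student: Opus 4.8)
The plan is to work throughout with a preimage $\hat g \in \pi^{-1}(g)$ of minimal length $\partial(\hat g) = \partial(g) =: N$, written in the reduced form $\hat g = \hat a^{\alpha_1} w_1 \cdots \hat a^{\alpha_l} w_l \hat a^{\alpha_{l+1}}$, and to analyse how the $N$ family-syllables making up $w_1, \ldots, w_l$ distribute among the first-level components $g_1,\ldots,g_p$ (which are the sections of $g \, a^{-\varepsilon_a(g)} \in \mathrm{Stab}_G(1)$, the trailing $a$-power only relabelling the components). The basic mechanism I would record first is this: since $a$ is rooted and each directed generator $b^{(j)}_i$ lies in $\mathrm{Stab}_G(1)$ with its unique non-$a$ section (its ``main line'') in coordinate $p-j+1$, when one computes $\psi_1$ along the word $\hat g$ the letters $\hat a^{\pm1}$ contribute nothing but a shift of the first-level permutation, while each maximal family-$k$ block $(\widehat{\mathbf{b}}^{(k)})^{\boldsymbol{\beta}}$ — which contains no intervening $\hat a$ — contributes its whole $b$-part to exactly one coordinate (the one currently aligned with $p-k+1$) and only powers of $a$ to the remaining $p-1$ coordinates. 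Moreover, since within a single $w_i$ the permutation stays constant while the family index strictly alternates, the $n_i$ family-blocks of $w_i$ land in $n_i$ \emph{distinct} coordinates.

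For the first inequality, it now suffices to bound $\sum_i \partial(g_i)$. Each of the $N$ family-blocks of $\hat g$ contributes a single family-syllable to precisely one component and nothing but $a$-powers to the others; reading off, for each $c$, the resulting word for $g_c$ as an explicit preimage in $\widehat G$ shows that $\partial(g_c)$ is at most the number of blocks landing in coordinate $c$ (it can only drop, through cancellation or the merging of adjacent equal-family blocks). Summing over $c$ gives $\sum_i \partial(g_i) \le N = \partial(g)$, as claimed.

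For the second statement, applying the first inequality at two consecutive levels yields $\sum_{i,j} \partial(g_{i,j}) \le \partial(g) = N$, so certainly $\partial(g_{i,j}) \le N$ for all $i,j$; the work is to rule out equality when $N > 1$. Suppose $\partial(g_{i_0,j_0}) = N$. By positivity of all summands every other $\partial(g_{i,j})$ vanishes, and combining $\partial(g_{i_0}) \ge \sum_j \partial(g_{i_0,j}) = N$ with $\partial(g_{i_0}) \le \sum_i \partial(g_i) \le N$ gives $\partial(g_{i_0}) = N$. Since $g$ has only $N$ family-blocks in total and each contributes at most one syllable to $\partial(g_{i_0})$, the equality $\partial(g_{i_0}) = N$ forces \emph{all} $N$ blocks to land in coordinate $i_0$ with no cancellation or merging; as the $n_i$ blocks of any $w_i$ occupy distinct coordinates, this forces $n_i = 1$ for every $i$, i.e.\ each $w_i$ is a single family-block. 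Consequently the section $g_{i_0}$ is exactly the concatenation $(\widehat{\mathbf{b}}^{(k_1)})^{\boldsymbol{\beta}_1} \cdots (\widehat{\mathbf{b}}^{(k_N)})^{\boldsymbol{\beta}_N}$ of these blocks \emph{with all separating powers of $a$ deleted} (the $a$'s were rooted and the main lines all fell in $i_0$), and $\partial(g_{i_0}) = N$ forces the consecutive family indices to differ, $k_m \ne k_{m+1}$.

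The decisive step is the passage to the third level, and this is where I expect the only real obstacle to lie and where the hypothesis $\partial(g)>1$ is genuinely used. The word just obtained for $g_{i_0}$ contains no letter $\hat a$, so in computing $\psi_1(g_{i_0})$ the first-level permutation never moves: each family-$k_m$ block sends its $b$-part to coordinate $p-k_m+1$ and nothing but $a$-powers elsewhere. Repeating the counting argument, $\partial(g_{i_0,j_0}) = N$ again forces all $N$ of these blocks to land in the single coordinate $j_0$, whence $p-k_m+1 = j_0$ and $k_1 = \cdots = k_N$. For $N \ge 2$ this contradicts $k_m \ne k_{m+1}$. Thus no $\partial(g_{i,j})$ can equal $\partial(g)$ once $\partial(g) > 1$, which is the desired strict inequality $\partial(g_{i,j}) < \partial(g)$; the heart of the matter is precisely that, upon passing to the component carrying all the length, the $a$'s that had separated the blocks vanish, so re-concentrating the length at the next level can only succeed if all blocks share a single family — which reducedness forbids as soon as there are at least two of them.
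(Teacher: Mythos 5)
Your proof is correct and follows essentially the same route as the paper: decompose $g\,a^{-\varepsilon_a(g)}$ into $\partial(g)$ conjugated family-blocks, observe that each contributes its body to exactly one first-level coordinate (giving the sum inequality), and note that concentrating all the length in a single coordinate strips away the separating powers of $a$, so that a second concentration at the next level would force all blocks into one family, contradicting reducedness when $\partial(g)>1$. The only quibble is that within a syllable $w_i$ only \emph{consecutive} family-blocks are guaranteed to land in distinct coordinates, not all $n_i$ of them, but the weaker fact is all your argument actually uses.
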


\begin{proof}
  Let $\partial(g)=m$.  We may express
  $g a^{-\varepsilon_a(g)} = \psi_1^{-1}(g_1,\ldots,g_p)$ as
 \[
 \big( (\mathbf{b}^{(k(1))})^{\boldsymbol{\beta}(1)} \big)^{a^{e_1}}
 \big( (\mathbf{b}^{(k(2)} )^{\boldsymbol{\beta}(2)} \big)^{a^{e_2}}
 \cdots \big( (\mathbf{b}^{(k(m))})^{\boldsymbol{\beta}(m)} \big)^{a^{e_m}} 
 \]
 with $k(i) \in \{1, \ldots, p\}$, $e_i\in \mathbb{Z}/p\mathbb{Z}$ and
 exponent vectors
 $\boldsymbol{\beta}(i) \in (\mathbb{Z}/p\mathbb{Z})^{r_{k(i)}}
 \setminus \{ \mathbf{0} \}$
 for $1 \le i \le m$.  Furthermore, we have $k(i) \ne k(i+1)$ whenever
 $e_i = e_{i+1}$.  For each $i \in \{1,\ldots, m\}$, the factor
 $\big( (\mathbf{b}^{(k(i))})^{\boldsymbol{\beta}(i)} \big)^{a^{e_i}}$
 contributes to precisely one coordinate $g_{j(i)}$ a factor
 $(\mathbf{b}^{(k(i))})^{\boldsymbol{\beta}(i)}$ and to all other
 coordinates $g_l$, $l \not = j(i)$, a power of $a$.  Hence the first
 statement of the lemma follows.

 Suppose that $\partial(g_{i,j}) = \partial(g)$ for some $i,j \in \{1,
 \ldots, p \}$.  This implies $\partial(g_i) = \partial(g)$ and
 \[
 g_i = \big(\mathbf{b}^{(k(1))} \big)^{\boldsymbol{\beta}(1)} \big(
 \mathbf{b}^{(k(2))} \big)^{\boldsymbol{\beta}(2)} \cdots
 \big(\mathbf{b}^{(k(m))} \big)^{\boldsymbol{\beta}(m)}.
 \]
 Now either $k(1) = \ldots = k(m)$ and then
 $\partial(g) = \partial(g_i) \in \{0, 1\}$, or there exists
 $l\in \{1,\ldots, m-1\}$ such that $k(l) \ne k(l+1)$ and thus
 $\partial(g_i)>\partial(g_{i,j})$ for $1\le j\le p$.
\end{proof}

We now prove part (3) of Theorem~\ref{thm:main-result}.

\begin{proposition} \label{CSP} Let
  $G= \langle a,\mathbf{b}^{(1)}, \ldots, \mathbf{b}^{(p)} \rangle \in
  \mathscr{C}$
  be in standard form and such that the families
  $\mathbf{b}^{(1)}, \ldots, \mathbf{b}^{(p)}$ of directed generators
  of $G$ satisfy the additional conditions
  \begin{enumerate}
  \item[(i)] every non-empty family $\mathbf{b}^{(j)}$,
    $j \in \{1, \ldots, p\}$, features at least one non-symmetric
    defining vector;
  \item[(ii)] there are at least two directed automorphisms, from two
    distinct families, that have the same defining vector.
  \end{enumerate}
  Then $G$ does not have the congruence subgroup property.
\end{proposition}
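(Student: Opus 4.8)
The plan is to disprove the congruence subgroup property by exhibiting a proper normal subgroup $N \trianglelefteq G$ of finite index that contains no level stabiliser $\mathrm{Stab}_G(n)$; this directly contradicts the property, which would force every finite-index subgroup to contain some $\mathrm{Stab}_G(n)$. Condition~(i) is precisely the hypothesis of Proposition~\ref{derived}, so $G$ is regular branch over $K := G'$ and, crucially, $\psi_1(\mathrm{Stab}_G(1)') = G' \times \overset{p}{\ldots} \times G'$. I shall also use throughout that $G/G' \cong C_p^{\,1+r_1+\cdots+r_p}$ with the basis supplied by the exponent maps of Proposition~\ref{pro:abelianisation}; note that condition~(ii) forces $r_1 + \cdots + r_p \geq 2$.

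First I would extract a \emph{witness} element from condition~(ii). Choose directed generators $b = b^{(k)}_i$ and $c = b^{(l)}_{i'}$ lying in distinct families, $k \neq l$, with identical defining vectors $(e_1, \ldots, e_{p-1})$. Conjugating $c$ by the appropriate power $a^{l-k}$ moves its directed coordinate onto that of $b$; because the two defining vectors agree, the powers of $a$ in every remaining coordinate of $\psi_1(c^{a^{l-k}})$ then coincide with those of $\psi_1(b)$, so the two tuples differ only in the directed coordinate. Hence
\[
w := b\,\big(c^{a^{l-k}}\big)^{-1} \in \mathrm{Stab}_G(1), \qquad \psi_1(w) = (1, \ldots, 1,\, bc^{-1},\, 1, \ldots, 1),
\]
with $bc^{-1}$ in coordinate $p-k+1$. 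By Proposition~\ref{pro:abelianisation} the image of $bc^{-1}$ in $G/G'$ equals $\underline{b}^{(k)}_i - \underline{b}^{(l)}_{i'} \neq 0$, so $bc^{-1} \notin G'$. I set $M := \langle bc^{-1}\rangle^G\, G'$; this is a proper normal subgroup with $[G:M] = p^{\,r_1+\cdots+r_p}$, since $M/G'$ is the single line spanned by $\underline{b}^{(k)}_i - \underline{b}^{(l)}_{i'}$.

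Next I would analyse the candidate $N := \langle w\rangle^G\,\mathrm{Stab}_G(1)'$, which is normal, of finite index (it contains $\mathrm{Stab}_G(1)'$), and contained in $\mathrm{Stab}_G(1)$. Conjugating $w$ realises $(1, \ldots, (bc^{-1})^x, \ldots, 1)$ in every coordinate for arbitrary $x \in G$ by fractality, so $\psi_1(\langle w\rangle^G) = \langle bc^{-1}\rangle^G \times \overset{p}{\ldots} \times \langle bc^{-1}\rangle^G$; combined with $\psi_1(\mathrm{Stab}_G(1)') = (G')^p$ this gives $\psi_1(N) = M \times \overset{p}{\ldots} \times M$. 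In particular $N \subsetneq \mathrm{Stab}_G(1)$, since $\psi_1(\mathrm{Stab}_G(1))$ surjects onto $G$ in each coordinate while $M \subsetneq G$. Now suppose, for a contradiction, that $\mathrm{Stab}_G(n) \subseteq N$ for some $n$. Projecting $\psi_1(\mathrm{Stab}_G(n)) \subseteq M^p$ to a single coordinate and using that, by fractality, this projection is all of $\mathrm{Stab}_G(n-1)$, we obtain $\mathrm{Stab}_G(n-1) \subseteq M$. Thus the whole proposition reduces to the claim that $\mathrm{Stab}_G(m) \not\subseteq M$ for every $m \geq 0$.

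The base cases $m = 0, 1$ are immediate, since the image of $\mathrm{Stab}_G(1)$ in $G/G'$ is the full $(r_1+\cdots+r_p)$-dimensional subspace spanned by the $\underline{b}^{(j)}_i$, whereas $M/G'$ is only a line (here $r_1+\cdots+r_p \geq 2$ is essential). The main obstacle is establishing the \emph{persistence} of this defect at every level: I would argue by induction on $m$ that the image of $\mathrm{Stab}_G(m)$ in $G/M$ remains non-trivial. The engine is the exponent recursion forced by the self-similar relations $\psi_1(b^{(j)}_i) = (\ldots, b^{(j)}_i, \ldots)$, which relates the exponent data of an element of $\mathrm{Stab}_G(m)$ to that of its sections one level down. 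When $r_1 + \cdots + r_p \geq 3$ there is a third directed generator, and fractality lets me plant it as a section deep along its own path, producing an element of $\mathrm{Stab}_G(m)$ whose abelianised image has a non-zero coordinate outside the line $M/G'$; this case is routine. The genuinely delicate case is $r_1 + \cdots + r_p = 2$ --- exactly Pervova's extended Gupta--Sidki situation --- where the two sections at the two relevant level-$m$ vertices are no longer independent, and one must rule out a diagonal relation that would trap the image inside $M/G'$. This is precisely where the non-symmetry hypothesis~(i) enters a second time, beyond its role in Proposition~\ref{derived}: it breaks the symmetry of the defining vectors and so prevents the recursion from collapsing the relevant functional on $G/G'$. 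Carrying out this induction is the technical heart of the argument; granting it, $N$ is a finite-index subgroup containing no level stabiliser, so $G$ fails the congruence subgroup property, and by the conjugacy-invariance of this property for branch groups (Corollary~\ref{uniqueaction}) the conclusion holds for every group in $\mathscr{C}$ of the stated form.
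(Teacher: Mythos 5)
Your proposal is not complete: the statement you yourself identify as ``the technical heart of the argument'' --- that $\mathrm{Stab}_G(m) \not\subseteq M$ for every $m$, where $M = \langle bc^{-1}\rangle^G G'$ --- is only sketched, with the decisive induction deferred (``granting it''). Worse, the reduction you chose makes the target \emph{harder} than necessary, and quite possibly false. Since $G' \subseteq M$, the claim $\mathrm{Stab}_G(m) \not\subseteq M$ is strictly stronger than $\mathrm{Stab}_G(m) \not\subseteq G'$, and the latter is all one needs: $G'$ has finite index by Proposition~\ref{pro:abelianisation}, so exhibiting, for every $n$, an element of $\mathrm{Stab}_G(n)$ outside $G'$ already shows that $G'$ is not a congruence subgroup. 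The paper does exactly this. Its witnesses are built recursively: set $t_1 = b$ and, given $t_{n-1} \in bG' \cap c\,\mathrm{Stab}_G(n-1)$, use Proposition~\ref{derived} (this is where condition~(i) enters, and it enters only here) to see that $x_n = \psi_1^{-1}(1,\ldots,1,b^{-1}t_{n-1},1,\ldots,1) \in G'$, then put $t_n = b^{a^{i-j}}x_n$. Because $b$ and $c$ share the same defining vector, $\psi_1(c^{-1}t_n)$ is trivial in every coordinate except the spine coordinate, where it equals $c^{-1}t_{n-1}$; hence $c^{-1}t_n \in \mathrm{Stab}_G(n)$ while $c^{-1}t_n \equiv c^{-1}b \not\equiv 1 \pmod{G'}$. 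This recursive ``pushing of $b$ down $c$'s spine'' is the idea missing from your argument. Note finally that these very witnesses $c^{-1}t_n$ lie \emph{inside} your subgroup $M$ (they are congruent to $c^{-1}b$ modulo $G'$, and $c^{-1}b$ generates $M/G'$), so they cannot certify $\mathrm{Stab}_G(m) \not\subseteq M$; your route would require an entirely different, and unsupplied, construction.

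A secondary issue: your reduction step ``projecting $\psi_1(\mathrm{Stab}_G(n)) \subseteq M^p$ to a single coordinate ... this projection is all of $\mathrm{Stab}_G(n-1)$'' is not justified by fractality alone. Fractality gives $\varphi_j(\mathrm{Stab}_G(1)) = G$, but surjectivity of $\varphi_j$ from $\mathrm{Stab}_G(n)$ onto $\mathrm{Stab}_G(n-1)$ requires controlling the remaining coordinates and does not follow formally; what one gets cheaply from Proposition~\ref{derived} is only $\varphi_j(\mathrm{Stab}_G(n)) \supseteq G' \cap \mathrm{Stab}_G(n-1)$. Your speculation that the non-symmetry hypothesis must ``enter a second time'' to break a diagonal relation is also not borne out: in the actual proof condition~(i) is used once, solely to invoke Proposition~\ref{derived}, and condition~(ii) does the rest.
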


\begin{proof}
 Based on condition (ii), we find directed generators
  $b \in \mathbf{b}^{(i)}$ and $c \in \mathbf{b}^{(j)}$, where
  $1 \le i < j \le p$, and
  $(e_1,\ldots,e_{p-1})\in (\mathbb{Z}/p\mathbb{Z})^{p-1}$ such that
  \[
  b = (a^{e_i},\ldots, a^{e_{p-1}},b,a^{e_1},\ldots, a^{e_{i-1}}),
  \quad  c = (a^{e_j},\ldots, a^{e_{p-1}},c,a^{e_1},\ldots, a^{e_{j-1}}).
  \] 
  We proceed as in~\cite[Lemma~3.1 and Corollary~3.1]{Pervova1}.
 
  First we construct recursively, for $n\in \mathbb{N}$, elements
  $t_n\in bG' \cap c \, \mathrm{Stab}_G(n)$.  Set $t_1=b$.  For
  $n \ge 2$, suppose
  $t_{n-1} \in b G' \cap c \, \mathrm{Stab}_G(n-1)$.  Condition (i)
  tells us that Proposition~\ref{derived} is at our disposal so that
  \[
  x_n
  :=\psi_1^{-1}(1,\overset{p-j}{\ldots},1,b^{-1}t_{n-1},1,\overset{j-1}{\ldots},1)
  \in G'.
  \]
  Setting $t_n = b^{a^{i-j}} x_n \in b G'$, we conclude that
  \[
  \psi_1(t_n) = (a^{e_j},\ldots, a^{e_{p-1}},t_{n-1},a^{e_1},\ldots,
  a^{e_{j-1}})
  \]
  and thus
  \[
  \psi_1(c^{-1} t_n) =
  (1,\overset{p-j}{\ldots},1,c^{-1}t_{n-1},1,\overset{j-1}{\ldots},1).
  \]
  Since $c^{-1} t_{n-1} \in \mathrm{Stab}_G(n-1)$, we see that
  $c^{-1} t_n \in \mathrm{Stab}_G(n)$, and hence
  $t_n\in bG' \cap c \, \mathrm{Stab}_G(n)$.

  To finish, we prove that the finite-index subgroup $G'$ of $G$ is
  not a congruence subgroup, i.e.\ does not contain
  $\mathrm{Stab}_G(n)$ for any $n\in \mathbb{N}$. From
  Proposition~\ref{pro:abelianisation}, it follows that
  $c^{-1} t_{n} \equiv c^{-1} b \not \equiv 1 \pmod{G'}$.  Hence $G'$
  does not contain $\mathrm{Stab}_G(n)$.
\end{proof}


\subsection{Weakly maximal subgroups}

Let $G \in \mathscr{C}$ be branch.  We recall that the \emph{parabolic
  subgroups} of $G$ are the stabilisers of the boundary points
$\ell \in \partial T$.  A subgroup of $G$ is \emph{weakly maximal} if
it is maximal among the subgroups of infinite index.  For a finitely
generated regular branch group, \cite[Theorem~1.1]{arxiv} shows that
any finite subgroup is contained in uncountably many weakly maximal
subgroups. It then follows that there are uncountably many
non-parabolic weakly maximal subgroups. This applies accordingly to
$G$, by using the finite subgroup $\langle a \rangle$ acting fix-point
freely on~$\partial T$. Note also that Corollary~\ref{uniqueaction}
allows us to consider all branch actions, thus
\cite[Theorem~1.3]{arxiv} is partly generalised:

\begin{corollary} \label{weakly} Let
  $G \in \mathscr{C}_\mathrm{reg}$.  Then there exist
  uncountably many $\mathrm{Aut}(G)$-orbits of weakly maximal
  subgroups of~$G$, all distinct from the orbits of parabolic
  subgroups under any branch action of $G$ on any spherically
  homogeneous rooted tree.
\end{corollary}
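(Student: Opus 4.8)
The plan is to reduce Corollary~\ref{weakly} to the two cited results from~\cite{arxiv}, namely \cite[Theorem~1.1]{arxiv} and \cite[Theorem~1.3]{arxiv}, by verifying their hypotheses for groups in $\mathscr{C}_\mathrm{reg}$.

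First I would record that every $G \in \mathscr{C}_\mathrm{reg}$ is a finitely generated regular branch group: finite generation is immediate from the definition \eqref{equ:def-gen-mult-ed-sp-gp}, while Proposition~\ref{sym} shows that $G$ is regular branch over $\gamma_3(G)$. This is precisely the setting required by \cite[Theorem~1.1]{arxiv}, so any finite subgroup of $G$ lies in uncountably many weakly maximal subgroups. I would then exhibit the concrete finite subgroup $\langle a \rangle \cong C_p$ and observe that it acts on $\partial T$ without fixed points: since $a$ cyclically permutes the first-level vertices, no boundary point $\ell \in \partial T$ is fixed by $a$. A weakly maximal subgroup containing such a fixed-point-free finite subgroup cannot be parabolic, because a parabolic subgroup $\mathrm{Stab}_G(\ell)$ fixes the point $\ell$ and hence cannot contain $\langle a \rangle$. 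This yields uncountably many non-parabolic weakly maximal subgroups, and \cite[Theorem~1.3]{arxiv} upgrades this to uncountably many $\mathrm{Aut}(G)$-orbits of such subgroups.

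The remaining point, and the one requiring the earlier machinery, is the clause ``under \emph{any} branch action of $G$ on \emph{any} spherically homogeneous rooted tree.'' The cited theorem is stated for a fixed branch action, so I would invoke Corollary~\ref{uniqueaction}: for branch $G \in \mathscr{C}$ the branch action is unique in the sense of Theorem~\ref{Theorem0.1}, meaning any other spherically homogeneous branch action is $G$-equivariantly obtained from the standard one by deletion of layers. Under such a layer-deletion the boundary $\partial T$ and the family of parabolic subgroups are preserved, so the orbits of parabolic subgroups coincide across all branch actions; hence the $\mathrm{Aut}(G)$-orbits we have produced remain distinct from parabolic orbits under every such action. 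The main obstacle is therefore not a calculation but ensuring this uniqueness genuinely transfers the fixed-point-free witness and the parabolic-orbit comparison across all admissible actions; once Corollary~\ref{uniqueaction} is in hand, the argument is a direct combination of the quoted results.
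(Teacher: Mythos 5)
Your proposal is correct and follows essentially the same route as the paper: the paper's justification (given in the paragraph preceding the corollary) likewise combines \cite[Theorem~1.1]{arxiv} applied to the finite subgroup $\langle a\rangle$ acting fixed-point freely on $\partial T$ with \cite[Theorem~1.3]{arxiv}, and invokes Corollary~\ref{uniqueaction} to cover all branch actions. Your write-up merely spells out the same steps in slightly more detail.
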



\section{Theta maps} \label{sec:theta}

\subsection{Length reduction} \label{sec:theta-def} Let
$G = \langle a,\mathbf{b}^{(1)},\ldots , \mathbf{b}^{(p)}\rangle \in
\mathscr{C}$
be in standard form.  We may assume that $r_1 \not = 0$ and that
$b_1 = b^{(1)}_1$ satisfies
\begin{equation} \label{equ:b1} \psi_1(b_1) =
  (a^{e_{1,1}},\ldots,a^{e_{1,p-1}},b_1) =
  (a,a^{e_{1,2}},\ldots,a^{e_{1,p-1}},b_1);
\end{equation}
see Lemma~\ref{dagger}.  We set
\begin{equation} \label{equ:def-n} n = \max \big\{ j \in
  \{1,\ldots,p-1\} \mid e_{1,j} \ne 0 \text{ in }
    \mathbb{Z}/p\mathbb{Z}  \big\}.
\end{equation}
Whereas we considered a slightly more general setting in \cite{Paper},
we suppose here from the outset that $G$ is a torsion group so that
$n\ge 2$.  This shortens some of the proofs.

In preparation for Section~\ref{sec:max-sub}, we recall
from~\cite[Section~4.2]{Paper} two length decreasing maps
$\Theta_1, \Theta_2 \colon G' \rightarrow G'$.  Clearly, $G'$ is a
subgroup of $\mathrm{Stab}_G(1)$.  Furthermore, every
$g\in \mathrm{Stab}_G(1)$ has a decomposition
\[
\psi_1(g)= (g_1,\ldots,g_p),
\]
where each $g_j \in U_{u_j}^G \cong G$ is an element of the upper
companion group acting on the subtree rooted at a first level vertex
$u_j$, $j \in \{1,\ldots,p\}$, and we define
\begin{equation} \label{eq:def-phi} \varphi_j\colon \mathrm{Stab}_G(1)
  \rightarrow \mathrm{Aut}(T_{u_j}) \cong \mathrm{Aut}(T), \quad
  \varphi_j(g)=g_j,
\end{equation}
using the natural identification of $T_{u_j}$ and~$T$.  
  In the proof of Theorem~\ref{theorem4.5} below we write
  $(g_1,\ldots,g_p)$ without warning in place of
  $g \in \mathrm{Stab}_G(1)$, as is customary to streamline certain
  computations.

We are interested in projecting, via $\varphi_p$, the first level
stabiliser $\mathrm{Stab}_M(1)$ of a subgroup $M \leq G$, containing
$b_1$ and an `approximation' $az \in a G'$ of $a$, to a subgroup of
$\mathrm{Aut}(T_{u_p})$.  Writing $\psi_1(z) =(z_1,\ldots,z_p)$, one
can show that
\[
\varphi_p({b_1}^{(az)^{-1}}) = a^{z_1^{-1}} = a[a,z_1^{-1}]
\]
and from this we define
\[
\Theta_1\colon G' \rightarrow G', \quad \Theta_1(z)=[a,z_1^{-1}].
\]

The map $\Theta_2$ is obtained similarly. As
  $e_{1,n} \ne 0$, we find $k \in \mathbb{Z}/p\mathbb{Z}$ such that
  $k e_{1,n} = 1$. One can show that
\[
\varphi_p \big( \big({b_1}^k \big)^{(az)^{p-n}} \big) = a^{z_{n+1}
  \cdots z_p} = a [a,z_{n+1} \cdots z_p]
\]
and we define
\[
\Theta_2\colon G'\rightarrow G', \quad
\Theta_2(z)=[a,z_{n+1} \ldots z_p].
\]

\begin{theorem} \label{theorem4.5} Let
  $G=\langle a,\mathbf{b}^{(1)}, \ldots, \mathbf{b}^{(p)} \rangle \in
  \mathscr{C}$
  be in standard form such that $r_1 \ne 0$ and \eqref{equ:b1} holds.
  Suppose further that $G$ is a torsion group.  Then the length
  $\partial(z)$ of an element $z\in G'$ decreases under repeated
  applications of a suitable combination of the maps $\Theta_1$ and
  $\Theta_2$ down to length $0$ or $2$.
\end{theorem}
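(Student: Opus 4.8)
The plan is to argue by induction on the length $m=\partial(z)$, the goal being to drive $m$ down to the value $0$ or~$2$. Two remarks set up the induction. Since $\Theta_1$ and $\Theta_2$ map $G'$ into~$G'$, any word in them may be reapplied, so the reduction procedure is well defined. Moreover no element of $G'$ has length~$1$: by Proposition~\ref{pro:abelianisation} the epimorphism $\pi\colon\widehat{G}\to G$ is an isomorphism on abelianisations, whence $\pi^{-1}(G')=\widehat{G}'$, and a reduced word of length~$1$ in $\widehat{G}$ has non-trivial $\mathbf b$-exponent and so cannot lie in~$\widehat{G}'$. Thus the admissible values below~$3$ are exactly $0$ and~$2$, and all the work lies in the inductive step $m\ge 3$.

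For the inductive step I would write $\psi_1(z)=(z_1,\ldots,z_p)$ and lean on Lemma~\ref{lemma4.1}, which supplies $\sum_{i=1}^{p}\partial(z_i)\le m$ together with the strict drop $\partial(z_{i,j})<m$ one further level down (valid as $m>1$). Unravelling the definitions, $\Theta_1(z)=[a,z_1^{-1}]$ sees only the first section while $\Theta_2(z)=[a,z_{n+1}\cdots z_p]$ sees only the tail sections indexed by $n+1,\ldots,p$; because $G$ is torsion we have $n\ge 2$, so the two index sets $\{1\}$ and $\{n+1,\ldots,p\}$ are disjoint. Since conjugation by $a$ preserves length and $\partial$ is sub-additive, one obtains $\partial(\Theta_1(z))\le 2\partial(z_1)$ and $\partial(\Theta_2(z))\le 2\bigl(\partial(z_{n+1})+\cdots+\partial(z_p)\bigr)$, while $\partial(z_1)+\sum_{i>n}\partial(z_i)\le m$.

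From these estimates a single map strictly shortens $z$ whenever the share of the length carried by its sections is less than $m/2$: if both $\Theta_1$ and $\Theta_2$ failed to decrease, then $\partial(z_1)\ge\lceil m/2\rceil$ and $\sum_{i>n}\partial(z_i)\ge\lceil m/2\rceil$, forcing $2\lceil m/2\rceil\le m$. This is impossible for odd~$m$, so for odd $m\ge 3$ one map already wins; there remains only the even concentrated case $\partial(z_1)=\sum_{i>n}\partial(z_i)=m/2$ with the middle sections trivial in length. Here a single application need not help --- indeed $\Theta_1$ fixes $[a,b_1]$, since $\varphi_1([a,b_1])=b_1^{-1}a$ gives $\Theta_1([a,b_1])=[a,a^{-1}b_1]=[a,b_1]$, which is exactly why length~$2$ is a genuine terminal value --- so instead I would compose the maps to descend one more level, where the second-level sections $z_{i,j}$ have length strictly below~$m$ by Lemma~\ref{lemma4.1}; feeding the appropriate one through $\Theta_1$ or $\Theta_2$ then yields an element of length $<m$, and the induction hypothesis finishes the step. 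The explicit case distinctions run parallel to \cite[Section~4.2]{Paper}, now streamlined by the normalisation~\eqref{equ:b1} and by the torsion bound $n\ge 2$.

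The main obstacle is that $\partial$ is not monotone under a single $\Theta_i$: the commutator $[a,\,\cdot\,]$ can double the length of a section, so when the length is concentrated no immediate decrease follows from the crude bounds. Resolving this needs the two-level viewpoint, combining the sub-additive control of the $\Theta_i$ with the strict contraction of second-level sections from Lemma~\ref{lemma4.1}, together with a careful check that in every concentration pattern some composition of $\Theta_1$ and $\Theta_2$ descends strictly; the torsion hypothesis enters precisely to secure $n\ge 2$, which guarantees that $\Theta_1$ and $\Theta_2$ between them reach enough sections for the argument to go through.
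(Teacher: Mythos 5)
Your set-up is sound and matches the paper's strategy: $\partial(z)\ne 1$ for $z\in G'$, the sub-additive bounds $\partial(\Theta_1(z))\le 2\partial(z_1)$ and $\partial(\Theta_2(z))\le 2\partial(z_{n+1}\cdots z_p)$ together with Lemma~\ref{lemma4.1} kill every odd $m\ge 3$ and reduce the problem to the concentrated even case $m=2\mu$ with $\partial(z_1)=\partial(z_{n+1}\cdots z_p)=\mu$. But the step you describe as ``compose the maps to descend one more level, where the second-level sections have length strictly below $m$, and feed the appropriate one through $\Theta_1$ or $\Theta_2$'' does not close the argument, and this is precisely where the real work lies. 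Writing $Z=z_{n+1}\cdots z_p=a^k(Z_1,\ldots,Z_p)$ with $\sum_i\partial(Z_i)\le\mu$, the first-level sections of $\Theta_2(z)=[a,Z]$ are the telescoping products $Z_p^{-1}Z_1,\,Z_1^{-1}Z_2,\ldots,Z_{p-1}^{-1}Z_p$; the inputs to the next $\Theta_1$ or $\Theta_2$ are therefore $Z_p^{-1}Z_1$ and $Z_n^{-1}Z_p$, whose lengths are bounded only by \emph{sums} of lengths of second-level sections. If the length of $Z$ is concentrated in $Z_p$ --- that is, $Z_1,\ldots,Z_{p-1}\in\langle a\rangle$ and $\partial(Z_p)=\mu$ --- then $\Theta_2(\Theta_2(z))=[a,a^lZ_p]=[a,Z_p]$ and the crude bound gives only $\le 2\mu=m$ again, with no decrease; the same obstruction can recur at the next level. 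So ``strictly below $m$ for individual second-level sections'' does not yield ``strictly below $m/2$ for the relevant combinations,'' which is what your induction needs.

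The paper closes this off with a structural argument that is entirely missing from your proposal. Persistence of the concentration forces $Z$ to have the shape $a^k\big((\mathbf{b}^{(k(1))})^{\boldsymbol{\beta}(1)}\big)^{a^*}\cdots\big((\mathbf{b}^{(k(\mu))})^{\boldsymbol{\beta}(\mu)}\big)^{a^*}$ with every syllable depositing its directed part into the $p$-th coordinate, so that $Z_p=(\mathbf{b}^{(k(1))})^{\boldsymbol{\beta}(1)}\cdots(\mathbf{b}^{(k(\mu))})^{\boldsymbol{\beta}(\mu)}$ is a reduced word with $k(i)\ne k(i+1)$ and no intervening powers of $a$. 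If the concentration persists once more, i.e.\ $\partial(Z_{p,p})=\mu$, then every syllable of $Z_p$ must send its directed part to the $p$-th coordinate as well, forcing $k(1)=\cdots=k(\mu)=1$; combined with $k(i)\ne k(i+1)$ this gives $\mu=1$, i.e.\ $m=2$, a contradiction. Without this analysis of which generator family each syllable belongs to, your ``careful check that in every concentration pattern some composition descends strictly'' remains an assertion of the theorem rather than a proof of it. (Your side remark that $\Theta_1$ fixes $[a,b_1]$ is a correct and useful sanity check that $2$ is a genuine terminal length, but it plays no role in the inductive step.)
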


\begin{proof}
  Let $z\in G'$.  We observe that $\partial(z)\ne 1$.  Suppose that
  $m = \partial(z) \ge 3$.  Then
  $z \in G' \subseteq \mathrm{Stab}_G(1)$ has a decomposition
  \[
  \psi_1(z) = (z_1,\ldots,z_p),
  \]
  and Lemma~\ref{lemma4.1} yields
  \[
  \partial(z_1)+ \partial(z_{n+1} \cdots z_p) \le m,
  \]
  where $n\ge 2$ is as defined in~\eqref{equ:def-n}.

  If $\partial(z_1)<\frac{m}{2}$ then $\partial(\Theta_1(z))<m$, and
  likewise if $\partial(z_{n+1} \cdots z_p)<\frac{m}{2}$ then
  $\partial(\Theta_2(z))<m$, and we apply induction.  Hence we may
  suppose that $m=2\mu$ is even and
  \begin{equation}\label{equ:mu-decomp}
    \partial(z_1) = \partial(z_{n+1} \cdots z_p) = \mu.
  \end{equation}

  We write $Z = z_{n+1} \cdots z_p = a^k (Z_1,\ldots,Z_p)$, for
  suitable $k\in \mathbb{Z}/p\mathbb{Z}$ and
  $(Z_1,\ldots,Z_p) \in \mathrm{Stab}_G(1)$ with
  $\sum_{i=1}^p \partial(Z_i) \leq \mu$; see Lemma~\ref{lemma4.1}.
  Consider
  \[
  \Theta_2(z)=[a,Z] = Z^{-a} Z = (Z_p^{\, -1} Z_1, Z_1^{\, -1} Z_2,
  \ldots, Z_{p-1}^{-1} Z_p).
  \]
  If $\partial(Z_1)\ge 1$, then $n \geq2$
  implies
  \[
  \partial\big((\Theta_2(z))_1\big)
  + \partial\big((\Theta_2(z))_{n+1}\cdots (\Theta_2(z))_p\big)
  = \partial(Z_p^{\, -1} Z_1) + \partial(Z_n^{\, -1} Z_p) <m.
  \]
  Consequently, $\partial((\Theta_2(z))_1)<\frac{m}{2}$ or
  $\partial((\Theta_2(z))_{n+1} \cdots (\Theta_2(z))_p)<\frac{m}{2}$
  and we are done by our earlier argument.  

  From now on suppose that~$Z_1 \in \langle a \rangle$.
  Applying~\eqref{equ:mu-decomp} to $\Theta_2(z)$ instead of $z$, we
  may assume that $\partial(Z_p) = \partial((\Theta_2(z))_1) = \mu$.  This
  implies that, in our usual notation,
  \[
  Z = a^k \, \big( (\mathbf{b}^{(k(1))})^{\boldsymbol{\beta}(1)} \big)^{a^*}
  \big( (\mathbf{b}^{(k(2))})^{\boldsymbol{\beta}(2)} \big)^{a^*}
  \cdots \big( (\mathbf{b}^{(k(\mu))})^{\boldsymbol{\beta}(\mu)}
  \big)^{a^*},
  \]
  where $k(i) \in \{1,\ldots, p\}$ for $1 \le i \le \mu$, the
  $\boldsymbol{\beta}(i) \in (\mathbb{Z}/p\mathbb{Z})^{r_{k(i)}}$ are
  suitable exponent vectors and the undeclared exponents $*$ of $a$
  are such that
  \begin{equation} \label{equ:z-p} Z_p = \big( \mathbf{b}^{(k(1))}
    \big)^{\boldsymbol{\beta}(1)} \big( \mathbf{b}^{(k(2))}
    \big)^{\boldsymbol{\beta}(2)} \cdots \big( \mathbf{b}^{(k(\mu))}
    \big)^{\boldsymbol{\beta}(\mu)}
  \end{equation}
  and $\{ Z_1, \ldots, Z_{p-1} \} \subseteq \langle a \rangle$;
  furthermore, $k(i) \ne k(i+1)$ for $1 \le i \le \mu-1$.  This
  implies $Z_n^{\, -1} Z_p = a^l Z_p$ for some $l\in \mathbb{Z}/p\mathbb{Z}$, hence
  \begin{equation*}
    \Theta_2(\Theta_2(z)) = [a,a^l Z_p]=[a,Z_p] = Z_p^{\, -a} Z_p.
  \end{equation*}

  We now repeat, for $Z_p = (Z_{p,1},\ldots,Z_{p,p})$, the argument
  applied earlier to~$Z$.  If $\partial(Z_{p,1}) \geq 1$, we are done
  by our earlier reasoning.  Otherwise we see that
  $\partial(Z_{p,p}) = \mu$, and \eqref{equ:z-p} implies
  $k(1)=k(2)=\ldots = k(\mu)=1$ leading to $\mu=1$, hence
  $m = 2\mu = 2$, a contradiction.
\end{proof}

We briefly comment that the above proof simplifies the corresponding
proof in \cite[Theorem~4.5]{Paper} for the multi-edge spinal groups
when $n\ge 2$.


\section{Maximal subgroups} \label{sec:max-sub} The cosets of
finite-index subgroups of a group $G$ form a base for the
\textit{profinite topology} on~$G$.  A subgroup $H$ of $G$ is
\textit{dense} with respect to the profinite topology if and only if
$G = NH$ for every finite-index normal subgroup $N$ of~$G$.  Thus
every maximal subgroup of infinite index in $G$ is dense and every
proper dense subgroup is contained in a maximal subgroup of infinite
index.

We consider a group $G \leq \mathrm{Aut}(T)$ acting on
  the regular $p$-adic rooted tree~$T$.  For any vertex $u$ of $T$,
  the group $U_u^G \le \mathrm{Aut}(T_u)$ maps isomorphically onto a
  group $G_u \leq \mathrm{Aut}(T)$ under the map induced by the
  natural identification of $T_u$ with~$T$.  Similarly, for a subgroup
  $M$ of $G$, we write $M_u$ for the corresponding image of $U_u^M$.
  If $G$ is fractal, then $G_u = G$ but nevertheless it is sometimes
  useful to write $G_u$ to emphasise that we are considering the
  isomorphic image of~$U_u^G$.  

  Taking into consideration that the notational convention
  in~\cite{Paper} is slightly different, we record some preliminary
  results.

\begin{proposition}[{\cite[Proposition~3.2]{Pervova4}}] \label{3.3.3}
  Let $T$ be a spherically homogeneous rooted tree and let $G \le
  \mathrm{Aut(T)}$ be a just infinite group acting transitively on each level
  of~$T$. Let $M$ be a dense subgroup of $G$ with respect to the
  profinite topology. Then
  \begin{enumerate}
  \item[(1)] the subgroup $M$ acts transitively on each level of the
    tree $T$,
  \item[(2)] for every vertex $u \in T$, the subgroup $M_u$ is dense
    in $G_u$ with respect to the profinite topology.
  \end{enumerate}
\end{proposition}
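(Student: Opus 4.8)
The plan is to prove the two assertions separately, drawing only on the density of $M$ and on the elementary fact that level stabilisers are normal of finite index. For part~(1) I would first record that $\mathrm{Stab}_G(n)$ is the kernel of the action of $G$ on the finite subtree $T_{[n]}$, hence a normal subgroup of finite index in $G$. Density then gives $G = \mathrm{Stab}_G(n)\,M$. To see that $M$ is already transitive on level~$n$, take vertices $u,v$ at that level and $g\in G$ with $u^g=v$; writing $g=sm$ with $s\in\mathrm{Stab}_G(n)$ and $m\in M$, and using that $s$ fixes level~$n$ pointwise, I get $u^m=u^{sm}=u^g=v$, so $M$ is transitive on level~$n$.

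For part~(2) I would fix a vertex $u$ and argue in two steps. The descent step shows that $\mathrm{Stab}_M(u)=M\cap\mathrm{Stab}_G(u)$ is dense in the finite-index subgroup $H=\mathrm{Stab}_G(u)$. Since $H$ need not be normal in $G$, density of $M$ cannot be applied to $H$ directly; instead, given a finite-index normal subgroup $N\trianglelefteq H$, I would pass to its normal core $N_0=\bigcap_{g\in G}N^g$, which is a finite-index normal subgroup of $G$ satisfying $N_0\subseteq N\subseteq H$. Density yields $G=M\,N_0$, so any $s\in H$ factors as $s=mt$ with $m\in M$ and $t\in N_0\subseteq H$; then $m=st^{-1}\in H$, whence $m\in\mathrm{Stab}_M(u)$ and $t\in N$, giving $H=\mathrm{Stab}_M(u)\,N$, as required for density.

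The push-forward step transports this density along the restriction homomorphism $\varphi_u\colon\mathrm{Stab}_G(u)\to G_u$, which is surjective by the definition of the upper companion group $G_u$ (the image of $U_u^G$) and which carries $\mathrm{Stab}_M(u)$ onto $M_u$. Here I would invoke the general principle that density is preserved under surjective homomorphisms: if $f\colon A\to B$ is onto and $D\le A$ is dense, then for any finite-index normal $\bar N\trianglelefteq B$ the preimage $f^{-1}(\bar N)$ is finite-index normal in $A$, so $D\,f^{-1}(\bar N)=A$, and applying $f$ gives $f(D)\,\bar N=B$. Taking $f=\varphi_u$ and $D=\mathrm{Stab}_M(u)$ then shows that $M_u=\varphi_u(\mathrm{Stab}_M(u))$ is dense in $G_u$, completing part~(2).

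I expect the descent step to be the only genuine obstacle: density is a condition formulated in terms of \emph{normal} finite-index subgroups, whereas $\mathrm{Stab}_G(u)$ is merely of finite index, so one must manufacture a suitable normal subgroup of $G$ by taking the normal core of an arbitrary $N\trianglelefteq H$. The two auxiliary facts that carry the whole argument — that density passes to any finite-index subgroup $H$ (via the normal core of $N\trianglelefteq H$) and that density is preserved under surjections — are precisely where the care is needed, after which both steps are routine. I would also note that neither conclusion appears to require $G$ to be just infinite; that hypothesis is natural for the surrounding applications but is not actually invoked in this argument.
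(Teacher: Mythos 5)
Your argument is correct and complete. Note that the paper itself gives no proof of this proposition: it is quoted verbatim from Pervova (\cite[Proposition~3.2]{Pervova4}), so there is nothing in the text to compare against line by line. What you have written is essentially the standard argument behind Pervova's result: part (1) uses that $\mathrm{Stab}_G(n)$ is normal of finite index (kernel of the action on the finite tree $T_{[n]}$) together with the factorisation $g=sm$; part (2) combines the descent of density to the finite-index subgroup $\mathrm{Stab}_G(u)$ via the normal core of $N\trianglelefteq H$ with the push-forward of density along the surjection $\varphi_u\colon\mathrm{Stab}_G(u)\twoheadrightarrow G_u$. Both auxiliary facts are proved correctly, and your closing observation is also accurate: just infiniteness of $G$ is not used anywhere in this argument — only transitivity on layers and the finiteness of the layers (so that level and vertex stabilisers have finite index) are needed; the just infinite hypothesis is carried along because it is required in the subsequent results (e.g.\ Proposition~\ref{proper}) where these conclusions are applied.
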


The following result is a direct generalisation of
  \cite[Proposition~5.2]{Paper}.  The proof of the latter, however,
  does not seem to contain all necessary details; these have
  now been worked out in a more general setting by Francoeur and
  Garrido following a strategy originally due to Pervova;
  see~\cite[Proposition~6.3]{AlejandraDominik}.

  \begin{proposition} \label{proper} Let $T$ be the regular $p$-adic
    rooted tree, and let $G \leq \mathrm{Aut}(T)$ be a branch group
    that is just infinite and fractal.  Let $M$ be a proper dense
    subgroup of $G$, with respect to the profinite topology. Then
    $M_u$ is a proper subgroup of $G_u = G$ for every
    vertex $u$ of~$T$.
\end{proposition}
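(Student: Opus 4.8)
The plan is to reduce the statement to a single level and then exploit just infiniteness. Throughout I invoke Proposition~\ref{3.3.3}: since $M$ is dense in the just infinite, level-transitive group $G$, it acts transitively on every layer and each companion $M_u$ is dense in $G_u = G$. I would prove the contrapositive by establishing the following \emph{Key Lemma}: if $N \le G$ is dense and $N_c = G$ for some first-level vertex $c$, then $N = G$. Granting this, suppose $M_u = G$ for a vertex $u$ at level $n \ge 1$ with parent $w$. Writing $c$ for the first-level vertex of $T_w \cong T$ corresponding to $u$, one checks that $(M_w)_c = M_u = G$; as $M_w$ is dense in $G_w = G$ by Proposition~\ref{3.3.3}(2), the Key Lemma yields $M_w = G$. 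Iterating up the unique path to the root gives $M = G$, contradicting properness, so $M_u \ne G$ for every vertex $u$.

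To set up the Key Lemma, relabel so that $c = u_p$, and put $R_j = N \cap \mathrm{Rstab}_G(u_j)$ for $1 \le j \le p$; note $R_j \le \mathrm{Stab}_G(1)$. Since $N$ is dense it is level-transitive, so conjugating the hypothesis $N_{u_p}=G$ by elements of $N$ permuting the first layer gives $N_{u_j} = G$ for every $j$ and identifies the $R_j$ with one another under these conjugations. Because $\mathrm{Rstab}_G(u_p)$ is normalised by $\mathrm{Stab}_G(u_p) \supseteq \mathrm{Stab}_N(u_p)$, the subgroup $R_p$ is normal in $\mathrm{Stab}_N(u_p)$; restricting to $T_{u_p}$ (which on $R_p$ agrees with $\varphi_p$) therefore sends $\mathrm{Stab}_N(u_p)$ onto $N_{u_p}=G$ and $R_p$ onto a normal subgroup $\varphi_p(R_p) \trianglelefteq G$. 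As $G$ is just infinite, $\varphi_p(R_p)$ is either trivial or of finite index.

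In the finite-index case the argument closes quickly. Since $\varphi_p$ restricts to an isomorphism from $\mathrm{Rstab}_G(u_p)$ onto the lower companion group $L_1^G$, which has finite index in $G$ because $G$ is branch, the subgroup $R_p$ has finite index in $\mathrm{Rstab}_G(u_p)$. Transporting by the conjugations above, each $R_j$ has finite index in $\mathrm{Rstab}_G(u_j)$; the $R_j$ pairwise commute and lie in $N$, so their internal direct product $\prod_{j} R_j \le N$ has finite index in $\mathrm{Rstab}_G(1)$, and hence in $G$. A dense subgroup of finite index equals the whole group, so $N = G$.

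The remaining case, where $\varphi_p(R_p) = 1$ and hence $R_j = 1$ for all $j$, is the main obstacle. Here $N$ contains no non-trivial automorphism supported on a single first-level subtree, yet still surjects onto $G$ under restriction to each $T_{u_j}$. One cannot conclude directly, because an element of $N \cap \mathrm{Rstab}_G(1)$ may have several non-trivial components, so this case is genuinely \emph{not} ruled out by the crude finiteness count. The strategy, due to Pervova, is a descent: using $N_{u_p}=G$ one lifts arbitrary elements of $G$ to members of $\mathrm{Stab}_N(u_p)$ and forms commutators whose support can be driven into progressively deeper rigid vertex stabilisers; tracking this through the levels eventually forces a non-trivial rigidly supported element into $N$, contradicting $R_j = 1$, unless $N=G$ already. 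This delicate bookkeeping is exactly the step that was missing from the original proof of \cite[Proposition~5.2]{Paper}, and I would close the case by invoking the general version carried out by Francoeur and Garrido in \cite[Proposition~6.3]{AlejandraDominik}.
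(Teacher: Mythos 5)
Your proposal is sound as far as it goes, and it ends up in the same place as the paper: the paper gives no self-contained proof of Proposition~\ref{proper}, but simply defers to Francoeur and Garrido \cite[Proposition~6.3]{AlejandraDominik}, noting that the original argument in \cite[Proposition~5.2]{Paper} lacked details. Your reduction to the first-level Key Lemma and your treatment of the finite-index branch of the dichotomy for the normal subgroup $\varphi_p(R_p)\trianglelefteq G$ are both correct, but you rightly identify that the case $\varphi_p(R_p)=1$ carries the genuine difficulty, and there you invoke exactly the same external result the paper relies on --- so the proposal is consistent with the paper's treatment rather than an independent alternative.
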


We now proceed in similar fashion to~\cite{Paper}.

\begin{proposition} \label{proposition5.3} Let
  $G=\langle a,\mathbf{b}^{(1)}, \ldots, \mathbf{b}^{(p)} \rangle \in
  \mathscr{C}$
  be in standard form. Suppose that $G$ is a torsion
    group, and let $M$ be a dense subgroup of $G$,
  with respect to the profinite topology.

  Then for each $j\in \{1,\ldots, p\}$ and
  $i\in \{1,\ldots,r_j\}$ there is a vertex $u$ of $T$ and an element
  $g\in \mathrm{Stab}_G(u)$, acting on $T_u$ as $a^{\ell}$ for some
  $\ell \in \mathbb{Z}/p\mathbb{Z}$ under the natural identification
  of $T_u$ and $T$, such that
  \begin{enumerate}
  \item[(i)] $(M^g)_u=(M_u)^{a^{\ell}}$ is a dense subgroup of
    $G_u = G$, 
  \item[(ii)] there exists
    $c \in (M_u)^{a^{\ell}} \cap \langle
    b^{(j)}_1,\ldots,b^{(j)}_{r_j} \rangle$
    such that $\varepsilon_{b^{(j)}_i}(c) \ne 0$.
  \end{enumerate}
\end{proposition}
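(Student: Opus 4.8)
The plan is to prove the statement by a length-minimisation descent, exploiting that every section of a dense subgroup is again a proper dense subgroup, together with the compatibility of the exponent maps with the decomposition~$\psi_1$. Since $G$ is torsion it is just infinite and fractal, so for a dense $M$ Proposition~\ref{3.3.3} shows that every section $M_v$ is dense in $G_v = G$ and Proposition~\ref{proper} shows that it is proper; I may therefore replace $M$ freely by any of its sections. The key observation is that $\varepsilon_{b^{(j)}_i}$ is additive over first-level sections: for $m \in \mathrm{Stab}_G(1)$ with $\psi_1(m) = (m_1,\ldots,m_p)$ one has $\varepsilon_{b^{(j)}_i}(m) = \sum_{t=1}^{p} \varepsilon_{b^{(j)}_i}(m_t)$. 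As both sides are homomorphisms on $\mathrm{Stab}_G(1)$, it suffices to verify this on the generators $(b^{(k)}_l)^{a^t}$, where it is immediate from the recursion for $\psi_1(b^{(k)}_l)$: the directed letter reappears in a single coordinate, and every other coordinate is a power of $a$, which has trivial $b^{(j)}_i$-exponent. By Proposition~\ref{pro:abelianisation}, density guarantees that each section contains elements with $\varepsilon_{b^{(j)}_i} \neq 0$, so the quantity below is well defined.

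Next I would set up the descent. Let $d \ge 1$ be the least length $\partial(m)$ of an element $m$ lying in some section $M_v$ of $M$ with $\varepsilon_{b^{(j)}_i}(m) \neq 0$, and fix a minimiser $m$. Assuming $m \in \mathrm{Stab}_G(1)$, additivity yields a first-level section $m_t = \varphi_t(m) \in (M_v)_{u_t}$ with $\varepsilon_{b^{(j)}_i}(m_t) \neq 0$, and Lemma~\ref{lemma4.1} gives $\partial(m_t) \le d$; if moreover $m_t \in \mathrm{Stab}_G(1)$ and $d > 1$, a second application of additivity together with the strict inequality $\partial(m_{t,s}) < d$ from Lemma~\ref{lemma4.1} produces a double section with nonzero $b^{(j)}_i$-exponent and length strictly below $d$, contradicting minimality. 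Thus $d = 1$. A length-one element with $\varepsilon_{b^{(j)}_i} \neq 0$ and $\varepsilon_a = 0$ has reduced form $a^{\alpha}(b^{(j)})^{\boldsymbol{\beta}} a^{-\alpha}$ with $\beta_i \neq 0$, the family being forced to be $j$; that is, $m = \big((b^{(j)})^{\boldsymbol{\beta}}\big)^{a^{-\alpha}}$. Taking $u = v$, $\ell = \alpha$ and any $g \in \mathrm{Stab}_G(u)$ with $\varphi_u(g) = a^{\ell}$ --- which exists since $G$ is fractal, and for which $(M^g)_u = (M_u)^{a^\ell}$ is again dense by Proposition~\ref{3.3.3} --- the conjugate $c = a^{-\ell} m a^{\ell} = (b^{(j)})^{\boldsymbol{\beta}}$ lies in $(M_u)^{a^\ell} \cap \langle b^{(j)}_1,\ldots,b^{(j)}_{r_j}\rangle$ and satisfies $\varepsilon_{b^{(j)}_i}(c) = \beta_i \neq 0$. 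This yields (ii), while (i) is Proposition~\ref{3.3.3} applied to the dense conjugate $M^g$.

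The step I expect to be the main obstacle is the repeated hypothesis that the relevant section $m_t$ again lies in $\mathrm{Stab}_G(1)$, equivalently that the $a$-exponent can be held at $0$ along the descent and at its final step. This is a genuine difficulty: passing to a section can create a nonzero $a$-exponent (already $\psi_1\big(b^{(j)}_i b^{(k)}_l\big)$ places $b^{(j)}_i a^{\ast}$ in one coordinate), conjugation --- the only operation preserving membership in a section --- leaves $\varepsilon_a$ unchanged, and multiplying by a power of $a$ to correct it either leaves the dense subgroup or inflates the length. This is exactly where the torsion hypothesis is indispensable: the defining relations $\sum_{m} e^{(k)}_{l,m} = 0$ force $\sum_{t} \varepsilon_a(m_t) = 0$ for every $m \in \mathrm{Stab}_G(1)$, so the spurious $a$-exponent is globally constrained rather than free. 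To absorb it I would use the short approximations $az$ of $a$ with $z \in G'$ of length at most $2$ furnished by the $\Theta_1,\Theta_2$ descent of Theorem~\ref{theorem4.5}, correcting $m_t$ back into $\mathrm{Stab}_G(1)$ at bounded cost, and invoke the properness of the sections (Proposition~\ref{proper}) to rule out the degenerate cases in which the descent would otherwise stall. Making this $a$-exponent bookkeeping precise --- rather than the descent itself --- is where the real work of the proof lies.
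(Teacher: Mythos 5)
Your setup is sound and matches the paper's: the additivity of $\varepsilon_{b^{(j)}_i}$ over the coordinates of $\psi_1$, the base case $\partial = 1$ giving $c = \big((\mathbf{b}^{(j)})^{\boldsymbol{\beta}}\big)^{a^{-\alpha}}$, the use of Propositions~\ref{3.3.3} and~\ref{proper} to keep sections dense and proper, and a descent on length (the paper runs it as an induction on $\partial(x)$ starting from some $x \in M \cap b^{(j)}_i G'$, which exists because $G'$ is open). Your treatment of the case where the length does not drop at the first level --- passing to double sections via the second statement of Lemma~\ref{lemma4.1} --- is also essentially what the paper does. But the step you flag as ``the main obstacle'' is a genuine gap, and your proposed remedy is not the right one. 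Multiplying $m_t$ by a power of an approximation $az$ with $z \in G'$ of length $\le 2$ does return you to $\mathrm{Stab}_G(1)$ and preserves $\varepsilon_{b^{(j)}_i}$ (since $z \in G'$), but it can increase the length by up to $2(p-1)$, which destroys the strict descent on which your whole argument rests; moreover Theorem~\ref{theorem4.5} only produces such short $z$ after itself passing through further sections, so the bookkeeping you defer is not merely technical --- it is circular as stated.

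The paper's resolution is different and is the one idea your proposal is missing: when the section $\tilde{x} = a^k(h_1,\ldots,h_p)$ with $k \ne 0$ fails to lie in $\mathrm{Stab}_G(1)$, replace it by $\varphi_l(\tilde{x}^{\,p})$ for any $l$. Since $\tilde{x}^{\,p} = h^{a^{(p-1)k}} \cdots h^{a^k} h \in \mathrm{Stab}_G(1)$, one gets $\varphi_l(\tilde{x}^{\,p}) \equiv h_1 \cdots h_p \pmod{G'}$, so the $\varepsilon_{b^{(j)}_i}$-exponent is exactly preserved, the length does not increase, and --- this is where torsion enters, not through $\sum_m e^{(k)}_{l,m} = 0$ --- the order of $\varphi_l(y^p)$ is strictly smaller than the order of $y$, so iterating $y \mapsto \varphi_l(y^p)$ must terminate after finitely many steps in an element of the first-level stabiliser, at which point the induction on length resumes. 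Without this $p$-th power trick (or a substitute for it) your descent stalls exactly where you predict, so the proof is incomplete.
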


\begin{proof}
  By symmetry it suffices to prove the statement for $j=1$ and $i=1$,
  assuming $r_1 > 0$.  For notational simplicity, we write
  $b_1 = b^{(1)}_1, \ldots, b_{r_1} = b^{(1)}_{r_1}$.  It suffices to
  produce $u$ such that (ii) holds, as with $G$ being fractal, the existence
  of $g$ is automatic and Proposition~\ref{3.3.3} yields~(i).

  Since $\lvert G:G' \rvert$ is finite, $G'$ is open in the profinite
  topology. Thus we find $x\in M\cap b_1G'$.  In particular
  $x\in \mathrm{Stab}_G(1)$ with
  $\varepsilon_{b_1}(x) \ne 0$ in
    $\mathbb{Z}/p\mathbb{Z}$. We argue by induction on
  $\partial(x)\ge 1$.
 
  First suppose that $\partial(x)= 1$. Then $x$ has the form
  $x = c^{a^{\ell}}$, where
  $c \in \langle b_1,\ldots, b_{r_1} \rangle$ with
  $\varepsilon_{b_1}(c) \ne 0$.  Thus choosing the
  vertex $u$ to be the root of the tree~$T$, we have
  $c \in M^{a^{-\ell}} = (M_u)^{a^{-\ell}}$.
 
  Now suppose that $m = \partial(x) \ge 2$.  We first determine a
  suitable vertex $u_\omega$ at level $1$ or $2$, as follows.  Observe
  that 
  \[
  \varepsilon_{b_1}(\varphi_1(x))+ \ldots +
  \varepsilon_{b_1}(\varphi_p(x)) = \varepsilon_{b_1}(x) \ne 0.
  \]
  If there exists $i \in \{1,\ldots,p\}$ with
  $\varepsilon_{b_1}(\varphi_i(x)) \ne 0$ and
  $\partial(\varphi_i(x))<\partial(x)$, then we fix $\omega = i$.  If
  not, then for each $j \in \{1,\ldots,p\}$ with
  $\varepsilon_{b_1}(\varphi_j(x)) \ne 0$ we have
  $\partial(\varphi_j(x))=\partial(x)$.  But Lemma~\ref{lemma4.1}
  shows that $\sum_{i=1}^p \partial(\varphi_i(x))\le \partial(x)$;
  hence there exists a unique $j$ such that
  $\partial(\varphi_j(x)) =\partial(x)$ and
  $\varphi_i(x) \in \langle a \rangle$ for all remaining indices
  $i \ne j$.  This implies that $x$ is of the form
  \[
  x = \big( (\mathbf{b}^{(k(1))})^{\boldsymbol{\beta}(1)} \big)^{a^*}
  \cdots \big( (\mathbf{b}^{(k(m))})^{\boldsymbol{\beta}(m)}
  \big)^{a^*},
 \]
 where the exponents $*$ are such that
 $\varphi_j(x) = (\mathbf{b}^{(k(1))})^{\boldsymbol{\beta}(1)} \cdots
 (\mathbf{b}^{(k(m))})^{\boldsymbol{\beta}(m)} \in
 \mathrm{Stab}_G(1)$.
 Furthermore $\varepsilon_{b_1}(\varphi_j(x)) \ne 0$
 implies that $\varepsilon_{b_1}(\varphi_{jp}(x)) \ne 0$
 and $\partial(\varphi_{jp}(x))<\partial(x)$, where
 $\varphi_{jp}(x)=\varphi_p(\varphi_j(x))$.  We fix $\omega = jp$.

 We proceed in our analysis with 
 \[
 \tilde{x} = \varphi_\omega(x) \in M_{u_\omega} \leq G_{u_\omega}
 = G,
 \]
satisfying $\varepsilon_{b_1}(\tilde{x}) \ne 0$ and
$\partial(\tilde{x}) < \partial(x)$. First suppose that
$\tilde{x} \in \mathrm{Stab}_{G_{u(\omega)}}(1)$, where we write
$u(\omega)=u_\omega$ for readability.  By Proposition~\ref{proper},
the subgroup $M_{u_\omega}$ is dense in
$G_{u_\omega} = G$, and the result follows by induction.
 
 Now suppose that
 $\tilde{x} \not \in \mathrm{Stab}_{G_{u(\omega)}}(1)$.  For
 $l\in \{1,\ldots,p\}$ we claim  
 \begin{equation} \label{equ:p-th-power-congr}
   \varepsilon_{b_1}(\varphi_l(\tilde{x}^p)) =
   \varepsilon_{b_1}(\tilde{x}) \ne 0.
 \end{equation}
 To see this, observe that $\tilde{x}$ is of the form
 \begin{equation} \label{equ:tilde-x-1} \tilde{x}= a^k h = a^k
   (h_1,\ldots,h_p),
 \end{equation}
 where $k = \varepsilon_a(x) \ne 0$ and
 $h \in \mathrm{Stab}_{G_{u(\omega)}}(1)$ with
 $\psi_1(h) = (h_1,\ldots,h_p)$.  Raising $\tilde{x}$ to the prime
 power $p$, we obtain
 \[
  \tilde{x}^p = (a^k h)^p = h^{a^{(p-1)k}} \cdots h^{a^k} h,
 \]
 and thus, for $l\in \{1,\ldots, p\}$,
 \begin{equation} \label{equ:tilde-x-2} \varphi_l(\tilde{x}^p) \equiv
   h_1 h_2 \cdots h_p \pmod{G_{u_\omega}'}.
 \end{equation}
 In view of \eqref{equ:tilde-x-1} and \eqref{equ:tilde-x-2}, we conclude that
 \eqref{equ:p-th-power-congr} holds. 
 
 Furthermore we observe, from the above equations and from the proof of Lemma~\ref{lemma4.1}, that
 \[
  \partial(\varphi_l(\tilde{x}^p)) \le \partial(\tilde{x}) < \partial(x).
 \]
 If $\varphi_l(\tilde{x}^p) \in M_{u(\omega l)}$
 belongs to $\mathrm{Stab}_{G_{u(\omega l)}}(1)$, we are done as
 before by induction.  If not, we apply the operation
 $y\mapsto \varphi_l(y^p)$ repeatedly.  Since $M$ is a torsion group,
 $x\in \mathrm{Stab}_M(1)$ and $\tilde{x}$ have finite order.
 Clearly, if $y\in G$ has finite order then $\varphi_l(y^p)$ has order
 strictly smaller than $y$. Thus after finitely many iterations, we
 reach an element
 \[
  \tilde{\tilde{x}}=\varphi_l(\varphi_l(\ldots
  \varphi_l(\varphi_{\omega}(x)^p)^p\ldots )^p)\in M_{u(\omega l\ldots l)}
 \]
 which, in addition to the inherited properties
 $\varepsilon_{b_1}(\tilde{\tilde{x}}) \ne 0$ and
 $\partial(\tilde{\tilde{x}})<\partial(x)$, satisfies
 $\tilde{\tilde{x}}\in \mathrm{Stab}_{G_{u(\omega l\ldots l)}}(1)$.  As
 before, the proof concludes by induction.
\end{proof}

The next result follows as in~\cite[Proposition~5.4]{Paper}, however
we give a slightly conciser version of the proof here.

\begin{proposition} \label{proposition5.4} Let
  $G=\langle a,\mathbf{b}^{(1)}, \ldots, \mathbf{b}^{(p)} \rangle \in
  \mathscr{C}$
  be in standard form such that $r_1 \ne 0$ and \eqref{equ:b1}
  holds.  Suppose further that $G$ is torsion.  Let $M$ be a
  dense subgroup of $G$, with respect to the profinite
  topology, and suppose that $b^{(1)}_1\in M$.  Then there exists a
  vertex $u$ of $T$ and an element $g\in \mathrm{Stab}_G(u)$ acting on
  $T_u$ as $h \in \mathrm{Stab}_G(1)$ under the natural identification
  of $T_u$ and $T$, such that
  \begin{enumerate}
  \item[(i)] $(M^g)_u=(M_u)^h$ is a dense subgroup of
    $G_u = G$,
  \item[(ii)] $a,b^{(1)}_1\in (M_u)^h$.
  \end{enumerate}
\end{proposition}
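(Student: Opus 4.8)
The plan is to use density to place, alongside $b^{(1)}_1$, an approximation $az$ of the rooted automorphism $a$ inside $M$, and then to straighten $az$ into $a$ by repeatedly descending to the section below the vertex $u_p$ along which $b^{(1)}_1$ is directed, using the length-reducing maps $\Theta_1,\Theta_2$ of Section~\ref{sec:theta-def}. Since $\lvert G:G'\rvert<\infty$, the subgroup $G'$ is open for the profinite topology; as $M$ is dense we have $G=MG'$, so there is an element $az\in M$ with $z\in G'$. Thus $M$ contains $b^{(1)}_1$ together with the approximation $az\in aG'$.

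For the descent, note that $M$ acts transitively on the first layer (Proposition~\ref{3.3.3}(1)), whence $\mathrm{Stab}_M(1)$ is dense in $\mathrm{Stab}_G(1)$; by fractalness and Proposition~\ref{3.3.3}(2) the restriction $M_{u_p}$ of $\mathrm{Stab}_M(u_p)$ to $T_{u_p}$ is dense in $G_{u_p}=G$. Both $b^{(1)}_1$ and ${b^{(1)}_1}^{(az)^{-1}}$ lie in $\mathrm{Stab}_M(1)\subseteq \mathrm{Stab}_M(u_p)$, and the computations preceding Theorem~\ref{theorem4.5} yield $\varphi_p(b^{(1)}_1)=b^{(1)}_1$ as well as $\varphi_p\big({b^{(1)}_1}^{(az)^{-1}}\big)=a\,\Theta_1(z)$; choosing instead $({b^{(1)}_1}^{k})^{(az)^{p-n}}$ produces $a\,\Theta_2(z)$. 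Hence $M_{u_p}$ is again dense and contains $b^{(1)}_1$ and an approximation $a\,\Theta_i(z)$ of $a$, with the defect $z\in G'$ replaced by $\Theta_1(z)$ or $\Theta_2(z)$ as we please. Iterating this passage to the $p$-th section and selecting the maps according to Theorem~\ref{theorem4.5}, after finitely many steps we reach a vertex $u$ with $M_u$ dense in $G$, containing $b^{(1)}_1$ and an approximation $az'$ of $a$ whose defect obeys $\partial(z')\in\{0,2\}$.

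If $\partial(z')=0$ then $z'$ is a power of $a$ lying in $G'$, hence trivial by Proposition~\ref{pro:abelianisation}; thus $a,b^{(1)}_1\in M_u$ and we finish with $h=1$. The case $\partial(z')=2$ is the crux. Since $z'$ arises as a value of $\Theta_i$, it is a commutator $z'=[a,w]$, so $az'=a^{w}$ is genuinely a conjugate of $a$; absorbing the rooted part we may take $w\in\mathrm{Stab}_G(1)$. Conjugating by $h=w^{-1}\in\mathrm{Stab}_G(1)$ straightens $a^{w}$ back to $a$, and the delicate point is to retain $b^{(1)}_1$: one must check that the conjugate of $b^{(1)}_1$ still belongs to $(M_u)^{h}$ — either by controlling the coordinates of $w$ against the commuting family $\mathbf{b}^{(1)}$, or by performing one further descent to $T_{u_p}$, in which the conjugate of $b^{(1)}_1$ restricts back to $b^{(1)}_1$. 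Once $a,b^{(1)}_1\in(M_u)^{h}$ has been secured, statement~(i) is automatic, since conjugation by $h$ is a homeomorphism for the profinite topology and therefore carries the dense subgroup $M_u$ to a dense subgroup. I expect this length-$2$ straightening, together with the bookkeeping needed to preserve $b^{(1)}_1$ through it, to be the main obstacle, whereas the length reduction itself is delivered cleanly by Theorem~\ref{theorem4.5}.
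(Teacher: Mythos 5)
Your overall strategy coincides with the paper's: use openness of $G'$ to find $az\in M$ with $z\in G'$, descend along the spine via $\varphi_p$ so that the theta maps replace the defect $z$ by $\Theta_1(z)$ or $\Theta_2(z)$, invoke Theorem~\ref{theorem4.5} to reach $\partial(z)\in\{0,2\}$, and dispose of the case $\partial(z)=0$ with $h=1$. Up to that point your argument is sound and matches the paper's.

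The gap is exactly where you flag it: the case $\partial(z)=2$ is not resolved, and the two escape routes you sketch do not close it as stated. Writing $az=a^{w}$ and conjugating by $h=w^{-1}$ only helps if $(b^{(1)}_1)^{w}\in M_u$, which in effect requires $w$ to commute with $b^{(1)}_1$; for a general defect of length $2$ the conjugator $w$ (which is $z_1^{-1}$ or $z_{n+1}\cdots z_p$) need not lie in $\langle\mathbf{b}^{(1)}\rangle$, so this fails without further work. Your alternative of ``one further descent'' also does not terminate on its own: Theorem~\ref{theorem4.5} only guarantees reduction down to length $0$ \emph{or} $2$, so the defect can remain at length $2$ indefinitely. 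The paper's proof supplies the missing analysis by a finer case distinction on $\psi_1(z)=(z_1,\ldots,z_p)$: if $\partial(z_1)\in\{0,2\}$ then one of $\Theta_1(z),\Theta_2(z)$ is already trivial; if $\partial(z_1)=1$, then after applying $\Theta_1$ and then $\Theta_2$ one either again reaches a trivial defect or arrives at $a\,\Theta_2(\Theta_1(z))=hah^{-1}$ with $h\in\langle\mathbf{b}^{(j)}\rangle$ for a single index $j$. When $j\ne 1$ a further application of $\Theta_2$ kills the defect, and when $j=1$ the conjugator $h$ lies in the commuting family of $b^{(1)}_1$, so conjugating $M_{u_{pp}}$ by $h$ straightens $hah^{-1}$ to $a$ while fixing $b^{(1)}_1$. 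This explicit identification of the final conjugator as an element of $\langle\mathbf{b}^{(1)}\rangle$ is the step your proposal is missing.
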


\begin{proof}
  It suffices to establish the existence of $u$ and $h$ such that (ii)
  holds, because $G$ is fractal. Since $G'$ is open and $M$ is dense
  in $G$, there is $z \in G'$ such that $az \in M$.
  Recall that we denote the $p$th vertex at level $1$
    by~$u_p$.  The coordinate map $\varphi_p$ allows us to restrict
  $\mathrm{Stab}_M(1)$ to $M_{u_p}$, and $b^{(1)}_1\in M$ implies
  $b^{(1)}_1\in M_{u_p}$.

  Consider the theta maps $\Theta_1,\Theta_2$ defined in
  Section~\ref{sec:theta}, and $n\ge 2$ be as in~\eqref{equ:def-n}.
  By definition, $a\Theta_1(z)$ and $a\Theta_2(z)$ belong to
  $M_{u_p}$. Moreover, repeated application of $\varphi_p$ corresponds
  to repeated applications of $\Theta_1$ and $\Theta_2$. By
  Proposition~\ref{proper} and Theorem~\ref{theorem4.5}, we may assume
  that $\partial(z)\in \{0,2\}$.  

  If $\partial(z)=0$ we take $h=1$ and there is nothing further to
  prove.  Suppose now that $\partial(z)=2$, and write
  $\psi_1(z)=(z_1,\ldots,z_p)$.  We distinguish between two cases.

  \medskip

  \noindent \underbar{Case 1}: $\partial(z_1) \in \{0, 2\}$. Then
  $\partial(z_1) = 0$ or $\partial(z_{n+1} \cdots z_p)=0$ so that
  $\Theta_1(z) = 1$ or $\Theta_2(z) = 1$, and again there is nothing
  further to prove.

  \medskip

  \noindent \underbar{Case 2}: $\partial(z_1)=1$.  Then
  $\tilde{z} = \Theta_1(z) = z_1^{\, a} z_1^{\, -1}$ satisfies
  $\partial(\tilde{z}) = 2$.  If $\partial(\tilde{z}_1) = 0$ or
  $\partial( \tilde{z}_{n+1} \cdots \tilde{z}_p )=0$, we proceed as in
  Case~1.  Thus we may assume that
  $z_1^{-1} = a^{\varepsilon_a(z_1^{-1})}(a^*,\ldots, a^*,h^{-1})$ for
  suitable $h \in \langle \mathbf{b}^{(j)}\rangle$ with
  $j \in \{1, \ldots, p\}$.  If $j\ne 1$, then we deduce that
  $\hat{z} = \Theta_2(\tilde{z}) = h^a h^{-1}$ satisfies
  $\partial(\hat{z}_1) = 0$ or
  $\partial(\hat{z}_{n+1} \cdots \hat{z}_p ) = 0$, and we proceed as
  in Case~1.  Now suppose that $j = 1$.  Then
  $hah^{-1} = a \hat{z} = a\Theta_2(\Theta_1(z))$ and, observing that
  $b_1^{(1)}$ commutes with $h$, we conclude that
  $a, b_1^{(1)} \in (M_{u_{pp}})^h$.
\end{proof}

The proofs of the next two results follow the same logic as those of
\cite[Proposition~5.5 and Theorem~5.6]{Paper}, so we omit the proofs
here.

\begin{proposition} \label{isG} Let $G \in \mathscr{C}$ be a torsion
  group, and let $M$ be a dense subgroup of $G$, with respect to the
  profinite topology.  Then there exists a vertex $u$ of $T$ such that
  $M_u = G_u = G$.
\end{proposition}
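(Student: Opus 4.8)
The plan is to show that a dense subgroup $M$ can be driven, by passing to suitable subtrees and conjugating, to a restriction that contains a full generating set of $G$. Throughout I would put $G$ in standard form satisfying \eqref{equ:b1} and record two standing facts: by Proposition~\ref{3.3.3} every restriction $M_v$ is again dense in $G_v = G$ and acts transitively on each layer, and by fractality $G_v = G$ for all vertices $v$. Since $\langle \mathbf{b}^{(j)} \rangle \cong C_p^{\, r_j}$, it suffices to reach a vertex $u$ at which $M_u$ contains $a$ together with, for every non-empty family $j$, a set of directed elements whose defining (exponent) vectors span $(\mathbb{Z}/p\mathbb{Z})^{r_j}$; linear independence then recovers each $b^{(j)}_i$, so that $M_u = G$.

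For the starting point I would combine Propositions~\ref{proposition5.3} and~\ref{proposition5.4}. First, Proposition~\ref{proposition5.3} (with $j = i = 1$) yields a vertex and a power of $a$ for which the associated dense restriction contains a first-family directed element with non-zero $b^{(1)}_1$-exponent; after adjusting the standard-form basis of the first family this element plays the role of $b^{(1)}_1$ and \eqref{equ:b1} persists. Proposition~\ref{proposition5.4} then upgrades this to a vertex where a conjugate $N$ of the corresponding restriction is dense and contains both $a$ and $b^{(1)}_1$.

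The core of the argument is an induction that enlarges, one family at a time, the span of captured exponent vectors while anchoring $a$ and $b^{(1)}_1$ through self-similarity. The key observation is that $\varphi_p(b^{(1)}_1) = b^{(1)}_1$, whereas a suitable conjugate $(b^{(1)}_1)^{a^{s}} \in N$ restricts at the child $u_p$ to $a^{e_{1,n}}$, a non-trivial power of $a$ since $e_{1,n} \neq 0$ by the choice of $n$ in \eqref{equ:def-n}; hence descending to $u_p$ retains $a$ and $b^{(1)}_1$. Given a dense restriction containing $a$, $b^{(1)}_1$ and elements spanning part of the families, I would apply Proposition~\ref{proposition5.3} to a target pair $(j_0,i_0)$ to locate a descendant vertex whose dense restriction picks up a $j_0$-family element with non-zero $b^{(j_0)}_{i_0}$-exponent, thereby strictly increasing the captured span. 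Because $M$ is torsion, the order reductions exploited in Proposition~\ref{proposition5.3} guarantee that the descent terminates, and iterating over all pairs $(j,i)$ delivers a vertex $u$ with $a$ and a full spanning set of every family inside $M_u$, whence $M_u = G$.

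The main obstacle is the simultaneous bookkeeping across distinct families: the directed generators of different families lie along different paths, so a single descent along the first-family child $u_p$ preserves first-family data but collapses the other families to powers of $a$. The delicate point is therefore to organise the order of captures and the choice of descendant vertices and conjugating elements so that all families are realised at one common vertex, re-supplying via self-similarity exactly the generators needed at each stage. This is governed by the reduction Lemma~\ref{rowechelon}, which forces the relevant defining-vector entries to be non-zero so that the self-similar reappearance can be arranged, at the cost of treating the small exceptional configurations ($p = 3$, or $r = 2$) separately, precisely as in the case analysis of Proposition~\ref{sym}. Once $a$ and a spanning set for each family have been secured at a common vertex $u$, the conclusion $M_u = G$ is immediate.
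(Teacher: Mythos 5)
Your overall skeleton matches the proof that the paper defers to (that of \cite[Proposition~5.5]{Paper}): seed $a$ and $b^{(1)}_1$ at some vertex via Propositions~\ref{proposition5.3} and~\ref{proposition5.4}, then iteratively capture the remaining directed generators until a full generating set of $G$ sits inside one restriction $M_u$. However, there is a genuine gap exactly at the point you yourself flag as ``the delicate point'': how previously captured generators survive the further descents that Proposition~\ref{proposition5.3} forces. The descent in Proposition~\ref{proposition5.3} is dictated by the element $x\in M\cap b^{(j_0)}_{i_0}G'$ and lands at an essentially arbitrary descendant vertex, not one you get to choose; so ``organising the order of captures and the choice of descendant vertices'' is not available, and Lemma~\ref{rowechelon} (a normalisation of defining vectors) has nothing to say about this. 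The correct mechanism, which your proposal is missing, is fractality of the captured subgroup: once $a$ and some directed elements $c_1,\ldots,c_k$ lie in $M_v$, the subgroup $K=\langle a,c_1,\ldots,c_k\rangle$ is itself a (generalised multi-edge spinal, in particular fractal) group, so $K=K_w\le M_w$ for \emph{every} vertex $w$ below $v$ --- one recovers each $c_i$ at any child $u_l$ from $\varphi_l\bigl(c_i^{\,a^{s}}\bigr)$ for a suitable $s$, together with $a$ from the non-zero entries of the defining vectors. In particular your assertion that descending along the first family's spine ``collapses the other families to powers of $a$'' is false for the captured \emph{subgroup} (as opposed to the individual generator), and it is precisely this observation that makes the induction go through no matter where Proposition~\ref{proposition5.3} sends you.

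A secondary gap: obtaining, for the pair $(j_0,i_0)$, an element $c\in\langle\mathbf{b}^{(j_0)}\rangle$ with $\varepsilon_{b^{(j_0)}_{i_0}}(c)\ne 0$ does \emph{not} by itself strictly enlarge the span already captured inside $\langle\mathbf{b}^{(j_0)}\rangle$ (the new element could lie in the old span while still having non-zero $i_0$-th exponent). You need to first change the basis of the family so that the currently captured subspace is $\langle b^{(j_0)}_1,\ldots,b^{(j_0)}_k\rangle$ and then target $i_0=k+1$; only then does a non-zero $(k+1)$-st exponent force the new element outside the old span. You perform a basis adjustment for the very first capture but omit it in the inductive step, which is where it is actually needed.
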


\begin{theorem} \label{last} Let $G \in \mathscr{C}$ be a torsion
  group.  Then $G$ does not contain any proper dense subgroups, with
  respect to the profinite topology.  Equivalently, $G$ does not contain
  maximal subgroups of infinite index.
\end{theorem}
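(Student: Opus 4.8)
The plan is to derive the theorem as an immediate consequence of the two propositions already assembled, by playing them off against one another. Before that, I would settle the asserted equivalence between the two formulations. As observed at the opening of Section~\ref{sec:max-sub}, every maximal subgroup of infinite index in $G$ is dense with respect to the profinite topology, and conversely every proper dense subgroup is contained in some maximal subgroup of infinite index. Hence $G$ admits a proper dense subgroup if and only if it admits a maximal subgroup of infinite index, and it suffices to rule out the former.

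Next I would verify that the hypotheses of the auxiliary results are in force. Since $G \in \mathscr{C}$ is torsion, the results of Section~\ref{sec:properties} apply: torsion groups in $\mathscr{C}$ lie in $\mathscr{C}_\mathrm{reg}$, so $G$ is branch (compare Proposition~\ref{sym}) and, being finitely generated torsion branch, just infinite; moreover every group in $\mathscr{C}$ is fractal and acts spherically transitively. Thus $G$ is a just infinite, fractal, branch group acting transitively on each level of~$T$, exactly as required by both Proposition~\ref{proper} and Proposition~\ref{isG}.

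The core of the argument is then a one-line contradiction. Suppose, for contradiction, that $M \leq G$ is a proper dense subgroup. On the one hand, Proposition~\ref{isG} supplies a vertex $u$ of $T$ with $M_u = G_u = G$. On the other hand, Proposition~\ref{proper}, applied to the proper dense subgroup $M$ of the just infinite, fractal branch group $G$, forces $M_u$ to be a \emph{proper} subgroup of $G_u = G$ for \emph{every} vertex~$u$. These two conclusions are incompatible, so no proper dense subgroup can exist, and the theorem follows via the equivalence established above.

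Regarding difficulty: the substantive work is entirely front-loaded into Propositions~\ref{isG} and~\ref{proper}, whose proofs rest on the theta-map length-reduction machinery of Theorem~\ref{theorem4.5} and the density-descent carried out in Proposition~\ref{proposition5.3}. Granting those, the present step is purely formal and I expect no genuine obstacle. The only point that demands a moment's care is to confirm that a single subgroup $M$ simultaneously feeds both propositions—Proposition~\ref{isG} requiring $M$ merely dense, and Proposition~\ref{proper} requiring $M$ proper dense—which is precisely why the contradiction hypothesis ``$M$ proper dense'' is exactly the ingredient that glues the two results together.
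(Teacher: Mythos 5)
Your argument is exactly the one the paper intends: the paper omits the proof, citing \cite[Theorem~5.6]{Paper}, whose logic is precisely to pit Proposition~\ref{isG} (some vertex $u$ has $M_u = G$) against Proposition~\ref{proper} ($M_u$ proper at every vertex) to obtain a contradiction, with the equivalence of the two formulations handled as at the start of Section~\ref{sec:max-sub}. The proposal is correct and follows essentially the same approach.
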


Recall that two groups $G$ and $H$ are (abstractly)
\emph{commensurable} if there exist finite-index subgroups $K \le G$
and $L \le H$ with $K \cong L$.

\begin{corollary} Let $H$ be a group that is commensurable with a
  torsion group $G \in \mathscr{C}$.  Then $H$ does not contain
  maximal subgroups  of infinite index.
\end{corollary}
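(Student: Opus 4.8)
The plan is to show that the property of possessing no proper dense subgroup in the profinite topology---equivalently, by the discussion opening Section~\ref{sec:max-sub}, of possessing no maximal subgroup of infinite index---is invariant under commensurability, and then to feed in Theorem~\ref{last}. Writing $\mathcal{D}(A)$ for the assertion that a group $A$ has no proper dense subgroup, I would unwind the definition of commensurability into two one-step transfers: with $K\le G$ and $L\le H$ finite-index subgroups and a fixed isomorphism $K\cong L$, it suffices to prove that $\mathcal{D}$ passes from a group to its finite-index overgroups (to deduce $\mathcal{D}(H)$ from $\mathcal{D}(L)$) and from a group to its finite-index subgroups (to deduce $\mathcal{D}(K)$ from $\mathcal{D}(G)$). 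Since $\mathcal{D}$ is an isomorphism invariant, the chain $\mathcal{D}(G)\Rightarrow\mathcal{D}(K)\cong\mathcal{D}(L)\Rightarrow\mathcal{D}(H)$ then finishes the proof. I record that $K$, $L$ and $H$ are all finitely generated, being commensurable with the finitely generated group~$G$, so every proper subgroup lies in a maximal one and the profinite topology on a finite-index subgroup agrees with the subspace topology.

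The overgroup direction is clean and I would carry it out in full, using Dedekind's modular law. Assume $\mathcal{D}(L)$ and, for contradiction, that $D\le H$ is a proper dense subgroup. For any finite-index $N\trianglelefteq L$, its core $N^\circ=\bigcap_{h\in H}N^h\trianglelefteq H$ has finite index and satisfies $N^\circ\le N\le L$. Density of $D$ gives $DN^\circ=H$, and since $N^\circ\le L$ the modular law yields
\[
L = L\cap DN^\circ = (D\cap L)\,N^\circ \subseteq (D\cap L)\,N \subseteq L,
\]
so $(D\cap L)N=L$. As $N$ ranges over the finite-index normal subgroups of $L$, this says precisely that $D\cap L$ is dense in $L$. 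If $D\cap L=L$ then $L\le D$, forcing $D$ to be a proper finite-index---hence open and closed---subgroup of $H$, which cannot be dense; thus $D\cap L$ is a proper dense subgroup of $L$, contradicting $\mathcal{D}(L)$.

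The subgroup direction is where the real work sits. First I would reduce to the normal case: replacing $K$ by its core $N=\mathrm{Core}_G(K)\trianglelefteq G$, which is finite-index and contained in $K$, the overgroup direction already gives $\mathcal{D}(N)\Rightarrow\mathcal{D}(K)$, so it is enough to prove $\mathcal{D}(G)\Rightarrow\mathcal{D}(N)$ for finite-index normal~$N$. Suppose $N$ had a maximal subgroup $M$ of infinite index. Because $G$ is just infinite, every nontrivial normal subgroup has finite index; applied to $\mathrm{Core}_G(M)\le M\le N$ this forces $\mathrm{Core}_G(M)=1$, so $G$ acts faithfully and transitively on the infinite coset space $G/M$, in which the $N$-orbit $N/M$ of the base point is an infinite primitive $N$-set. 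A short dichotomy shows moreover that every maximal subgroup $M^\sharp$ of $G$ containing $M$ either has infinite index---which, under $\mathcal{D}(G)$, is excluded---or contains $N$ (if $[G:M^\sharp]<\infty$ then $M\le M^\sharp\cap N\lneq N$ is impossible by maximality of $M$ in $N$, so $M^\sharp\cap N=N$).

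I expect the main obstacle to be precisely the conversion of this faithful infinite action, primitive upon restriction to $N$, into a genuine maximal subgroup of $G$ of infinite index: maximality (equivalently, primitivity) is exactly what can be lost in passing between $N$ and~$G$, and a Frattini-quotient computation shows that just-infiniteness together with the torsion $p$-group structure does \emph{not} by itself force the contradiction. To close this gap I would either adapt the restriction machinery of Propositions~\ref{3.3.3}, \ref{proper} and~\ref{isG}, culminating in the argument of Theorem~\ref{last}, to the finite-index subgroup~$N$---using that finite-index subgroups of the branch group $G$ remain branch---or invoke the known invariance of the `no maximal subgroup of infinite index' property under commensurability. Either route supplies the contradiction, yielding $\mathcal{D}(N)$ and hence completing the chain.
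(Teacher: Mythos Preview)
Your overgroup direction ($\mathcal{D}(L)\Rightarrow\mathcal{D}(H)$) is correct and cleanly argued. The subgroup direction, however, carries a genuine gap that you yourself flag but do not close. Of the two remedies you propose, the second---``invoke the known invariance of the `no maximal subgroup of infinite index' property under commensurability''---is not available: this is not a standard citable fact for finitely generated groups, and in the present context it is precisely the content of the corollary, so appealing to it is circular. Your analysis of maximal overgroups $M^\sharp\supseteq M$ in $G$ is accurate as far as it goes, but, as you concede, it does not by itself yield a contradiction: nothing abstract prevents every maximal subgroup of $G$ above $M$ from containing~$N$, so Frattini-type reasoning stalls exactly where you say it does.

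The paper does not attempt an abstract commensurability transfer. It refers to the proof of \cite[Corollary~1.3]{Paper} and notes the one structural input needed to make that argument run here: inside $G$ sit the multi-edge spinal subgroups $G_j=\langle a,\mathbf{b}^{(j)}\rangle$ and, inside each of these, the GGS-groups $G_{j,i}=\langle a,b_i^{(j)}\rangle$. The argument from \cite{Paper} uses these concrete self-similar subgroups together with the branch structure to rerun, at the level of a given finite-index subgroup, the restriction machinery behind Propositions~\ref{3.3.3}, \ref{proper}, \ref{isG} and Theorem~\ref{last}. That is essentially your Option~1, but it is not a short remark: it relies on $G$ being a specific branch group with identifiable self-similar pieces, not merely on $\mathcal{D}(G)$. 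To make your proposal a proof you must either reproduce that argument or cite it explicitly; as written, the subgroup step remains an intention rather than a proof.
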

 
\begin{proof}
  The proof is essentially the same as that of
  \cite[Corollary~1.3]{Paper}. However we do note here that
  $G=\langle a,\mathbf{b}^{(1)}, \ldots , \mathbf{b}^{(p)}\rangle$
  contains multi-edge spinal subgroups
  $G_j =\langle a,\mathbf{b}^{(j)} \rangle$ for $j\in \{1,\ldots,p\}$
  with $r_j\ne 0$.  Within these lie the associated GGS-groups
  $G_{j,i} = \langle a,b^{(j)}_i \rangle$ for $i\in \{1,\ldots, r_j\}$.
  These associated GGS-groups feature in the proof, as
  in~\cite{Paper}.
 \end{proof}

 We have established part (2) of Theorem~\ref{thm:main-result}.

 
\section{Irreducible representations} \label{sec:irr-reps}

In this section we prove Theorem~\ref{primitive} and
Theorem~\ref{thm:PassmanTemple}.  Throughout, let $F$ denote the prime
field $\mathbb{Z}/p\mathbb{Z}$.  Let $G \in \mathscr{C}$ be just
infinite, acting on the $p$-adic regular tree $T$.  We combine the
strategies laid out in~\cite{Bartholdi, Sidki1} to demonstrate when
the tree enveloping algebra $\mathfrak{A}_G$, a proper ring quotient
of $F[G]$, is primitive. The latter will imply that $G$ has faithful
irreducible representations over $F$.

\subsection{Preliminaries} We consider the $F$-vector space
$F\langle \!\langle\partial T \rangle \!\rangle$ on the
basis~$\partial T$, the boundary of~$T$. The action of $G$ on
$\partial T$ extends to an $F$-linear representation of the group
algebra
\[
\chi \colon F[G] \rightarrow \mathrm{End}(F \langle\!\langle \partial
T \rangle\!\rangle)
\]
which is injective on $G$. The \emph{tree enveloping algebra} of $G$
is the image $\mathfrak{A}_G$ of $F[G]$ under~$\chi$. It was
implicitly introduced by Sidki~\cite{Sidki1}, albeit in a different
form.

We collect some properties of the $F$-algebra $\mathfrak{A}_G$
from~\cite{Bartholdi}, with statements adapted to our setting. For
conciseness we include certain definitions and proofs, though altered
to suit our notation and purposes.  

Recall that an $F$-algebra $A$ is called \emph{just infinite}, if
$\dim_F A = \infty$ and every non-zero two-sided ideal has finite
codimension.  The Jacobson radical $\mathrm{Jac}(A)$ is the two-sided
ideal $\mathrm{Jac}(A) = \bigcap_X \mathrm{Ann}(X)$, where $X$ ranges
over all simple right $A$-modules.  For a one-sided ideal $I$ of~$A$,
the maximal two-sided ideal contained in $I$ is called the \emph{core}
of $I$, denoted by $\mathrm{core}(I)$.  The algebra $A$ is
\emph{primitive} if it has a faithful irreducible right module, or
equivalently a maximal right ideal with trivial core.  The algebra $A$
is \emph{semiprimitive} if its Jacobson radical is trivial.  Finally,
let $\mathrm{Aug}(\mathfrak{A}_G)$ denote the image of the
augmentation ideal of $F[G]$ in $\mathfrak{A}_G$.

\begin{lemma}[{\cite[Lemma~3.8,
      Theorem~3.9]{Bartholdi}}] \label{Lemma3.8} Let
  $G \in \mathscr{C}$ be just infinite.  Then its tree enveloping
  algebra $\mathfrak{A}_G$ is just infinite.
\end{lemma}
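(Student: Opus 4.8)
The plan is to reproduce, in the present setting, Bartholdi's proof that a self-similar branch algebra is just infinite (\cite[Theorem~3.9]{Bartholdi}); the real work consists in verifying that $\mathfrak{A}_G$ satisfies the hypotheses of that result. Infinite-dimensionality is the easy half. Restricting the action to the finite subtree $T_{[n]}$ yields, for every $n$, a surjection of $\mathfrak{A}_G$ onto a finite-dimensional algebra, namely the image of $F[G/\mathrm{Stab}_G(n)]$ in the permutation representation on the level-$n$ vertices. Since $G$ is infinite and acts faithfully on $T$, the indices $\lvert G : \mathrm{Stab}_G(n) \rvert$ are unbounded (otherwise $\mathrm{Stab}_G(n)$ would stabilise and, having trivial intersection over all $n$, force $G$ finite), so the dimensions of these quotient algebras tend to infinity and hence $\dim_F \mathfrak{A}_G = \infty$.

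For the substantive half I would exploit the self-similar branch structure. The decomposition $\partial T = \bigsqcup_{x=1}^p x\,\partial T$ splits $F\langle\!\langle\partial T\rangle\!\rangle$ into $p$ copies of itself; an element of $\mathrm{Stab}_G(1)$ acts block-diagonally through $\psi_1$, while $a$ permutes the blocks cyclically. This produces an injective algebra homomorphism $\Psi \colon \mathfrak{A}_G \hookrightarrow M_p(\mathfrak{A}_G)$, where fractalness of $G$ is used to identify each diagonal block with $\mathfrak{A}_G$ itself. Using that the just infinite $G$ is regular branch over $K = \gamma_3(G)$ (Proposition~\ref{sym}), I would take $\mathfrak{K}$ to be the two-sided ideal of $\mathfrak{A}_G$ generated by $\{\chi(g)-1 : g \in K\}$. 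Since $\lvert G : K \rvert < \infty$, the quotient $\mathfrak{A}_G/\mathfrak{K}$ is finite-dimensional, and the containment $K \times \overset{p}{\ldots} \times K \subseteq \psi_1(K)$ translates, via the rigid vertex stabilisers, into the branching relation $M_p(\mathfrak{K}) \subseteq \Psi(\mathfrak{K})$; that is, $\mathfrak{K}$ is a branching ideal in Bartholdi's sense.

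Granting this structure, just-infiniteness of $\mathfrak{A}_G$ follows the branch-algebra template. Given a non-zero two-sided ideal $I$ and some $0 \neq \theta \in I$, faithfulness of $\chi$ on the boundary guarantees that $\theta$ acts non-trivially at some vertex; iterating $\Psi$ and projecting onto the coordinate where non-triviality persists localises $\theta$ to a single block while remaining inside $I$, and after correcting by a power of $a$ and by the augmentation one lands a non-zero element of $I \cap \mathfrak{K}$. The relation $M_p(\mathfrak{K}) \subseteq \Psi(\mathfrak{K})$ then propagates this element across all $p$ coordinates and, upon iteration, forces $I$ to contain the $n$-th level branching ideal for some $n$, which has finite codimension because $\dim_F \mathfrak{A}_G/\mathfrak{K} < \infty$. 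Hence every non-zero ideal of $\mathfrak{A}_G$ has finite codimension, and $\mathfrak{A}_G$ is just infinite. The main obstacle is precisely the middle step: faithfully converting the group-theoretic regular-branch identity into the algebra identity $M_p(\mathfrak{K}) \subseteq \Psi(\mathfrak{K})$ with genuine control on the finite codimension of $\mathfrak{K}$, and making the localisation argument robust against the coordinate-mixing introduced by powers of $a$.
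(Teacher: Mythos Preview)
The paper does not supply its own proof of this lemma; it is recorded purely as a citation of \cite[Lemma~3.8, Theorem~3.9]{Bartholdi}, so there is nothing to compare against beyond Bartholdi's original argument. Your proposal is a faithful outline of that argument specialised to~$\mathscr{C}$, and the overall strategy---infinite-dimensionality via the level-$n$ quotients, the matrix embedding $\Psi$, a branching ideal of finite codimension, and the localise-then-propagate step---is exactly the template Bartholdi uses.

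One point deserves care. You appeal to Proposition~\ref{sym} to obtain regular branching over $K=\gamma_3(G)$, but that proposition is stated only for $G\in\mathscr{C}_\mathrm{reg}$, whereas the present lemma assumes merely that $G\in\mathscr{C}$ is just infinite. The paper does not explicitly verify that a just infinite member of~$\mathscr{C}$ lies in~$\mathscr{C}_\mathrm{reg}$ (equivalently, that no non-empty generator family has only constant defining vectors); it tacitly relies on Bartholdi's hypotheses being met. If you intend to write out a self-contained proof, you should either (i) argue that a just infinite $G\in\mathscr{C}$ is automatically regular branch---for instance by noting that a constant-vector family would force a weakly-branch-but-not-branch GGS subgroup incompatible with just infiniteness---or (ii) weaken the branching ideal to one coming from the rigid stabilisers directly, which is what Bartholdi's general formulation actually uses. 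Apart from this bookkeeping, your sketch is sound.
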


\begin{corollary}[{\cite[Lemma~3.15]{Bartholdi} and
    \cite[Corollary~4.4.3]{Sidki1}}] \label{C4.4.3} Let
  $G \in \mathscr{C}$ be just infinite. Then
  $\mathrm{Jac}(\mathfrak{A}_G)$ is either the zero ideal or equal to
  $\mathrm{Aug}(\mathfrak{A}_G)$.
\end{corollary}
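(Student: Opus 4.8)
The plan is to study $\mathrm{Jac}(\mathfrak{A}_G)$ through the simple right modules of $A:=\mathfrak{A}_G$, exploiting that $A$ is just infinite by Lemma~\ref{Lemma3.8}. First I would pin down $I:=\mathrm{Aug}(\mathfrak{A}_G)$. The ``total mass'' functional $\mu$ on the permutation module $F\langle\!\langle\partial T\rangle\!\rangle$, sending each basis vector to $1$, is $G$-invariant because $G$ permutes $\partial T$; hence it is an $A$-module map onto the trivial module, which forces the augmentation $F[G]\to F$ to factor through $A$, so that $A/I\cong F$. Thus $I$ is a two-sided ideal of codimension one, the trivial module $F$ is simple with $\mathrm{Ann}(F)=I$, and therefore $\mathrm{Jac}(A)=\bigcap_X\mathrm{Ann}(X)\subseteq\mathrm{Ann}(F)=I$, the intersection being taken over all simple right $A$-modules~$X$.

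Next I would split into two cases according to just-infiniteness: for each simple $X$ the annihilator $\mathrm{Ann}(X)$ is a two-sided ideal, hence either $0$ or of finite codimension. If some simple module is faithful, then $A$ is primitive and $\mathrm{Jac}(A)\subseteq\mathrm{Ann}(X)=0$; this yields the first alternative. Otherwise every simple $X$ has $\mathrm{Ann}(X)$ of finite codimension, so $A/\mathrm{Ann}(X)$ is a finite-dimensional algebra with a faithful simple module, hence simple Artinian; since finite division rings are fields, $A/\mathrm{Ann}(X)\cong M_n(\mathbb{F}_q)$ for some $n$ and some finite field $\mathbb{F}_q$ of characteristic $p$. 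In other words, each simple module affords a finite-dimensional irreducible representation of $G$ over a field of characteristic~$p$.

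It then remains to prove the decisive claim that every such finite-dimensional irreducible representation is trivial: granting this, each $\mathrm{Ann}(X)$ equals $I$, the trivial module is the unique simple module, and $\mathrm{Jac}(A)=\mathrm{Ann}(F)=I=\mathrm{Aug}(\mathfrak{A}_G)$, which is the second alternative. For the claim, note that $a$ and all the $b^{(j)}_i$ have order $p$, so their images are unipotent and the representation factors through a finite quotient $\overline{G}$ of $G$ generated by elements of order $p$. When $G$ is torsion this finishes matters at once: then $\overline{G}$ is a finite group in which every element has $p$-power order, hence a $p$-group, and the group algebra of a finite $p$-group over a field of characteristic $p$ is local, so its only irreducible module is trivial.

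The main obstacle is precisely this claim for a just infinite $G\in\mathscr{C}$ that need not be torsion, where $\overline{G}$ is only seen to be generated by $p$-elements and so need not visibly be a $p$-group. Here I would bring in the regular branch structure of Proposition~\ref{sym}: in a just infinite branch group every non-trivial normal subgroup contains $\mathrm{Rstab}_G(n)'$ for some $n$, so the kernel of the representation does, and the representation factors through the finite quotient $G/\mathrm{Rstab}_G(n)'$. One is then reduced to checking that this quotient is a $p$-group, using that the congruence quotients $G/\mathrm{Stab}_G(n)$ are $p$-groups because $G\le S$, together with the layer structure of the rigid stabilisers. The genuinely delicate feature, and the reason one must route the argument through rigid rather than level stabilisers, is that $G$ may fail the congruence subgroup property, so no finite-dimensional representation can be assumed to be trivial on any $\mathrm{Stab}_G(n)$.
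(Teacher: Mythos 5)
Your reduction is sound as far as it goes: the augmentation of $F[G]$ does factor through $\mathfrak{A}_G$ via the total-mass functional, so $\mathrm{Jac}(\mathfrak{A}_G)\subseteq\mathrm{Aug}(\mathfrak{A}_G)$; just infiniteness of $\mathfrak{A}_G$ correctly splits the analysis into a faithful simple module (giving $\mathrm{Jac}=0$) versus only finite-codimension annihilators; and the torsion case is then complete, since every torsion element of the Sylow pro-$p$ subgroup $S$ has $p$-power order, so every finite quotient of a torsion $G$ is a $p$-group and its modular group algebra is local. The problem is that the corollary is asserted for \emph{all} just infinite $G\in\mathscr{C}$, and non-torsion examples exist (e.g.\ branch GGS-groups whose defining vector has non-zero entry sum). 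For these your argument stops exactly at the decisive point: you observe that $\overline{G}$ is generated by elements of order $p$ and that the kernel of the representation contains some $\mathrm{Rstab}_G(n)'$, but you do not prove that $G/\mathrm{Rstab}_G(n)'$ is a finite $p$-group, and I do not see how your proposed route through ``the layer structure of the rigid stabilisers'' delivers this. Reducing along the chain $G\ge\mathrm{Stab}_G(n)\ge\psi_n^{-1}(K\times\cdots\times K)\ge\psi_n^{-1}(K'\times\cdots\times K')$ with $K=\gamma_3(G)$ leaves you needing that the finite abelianisation $K/K'$ is a $p$-group, which for a non-torsion $G$ is essentially the same difficulty you started with: being generated by $p$-elements, or being residually-(finite $p$), does not force a finite quotient to be a $p$-group.

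The paper closes precisely this gap by a different mechanism. Assuming $\mathrm{Jac}(\mathfrak{A}_G)\ne 0$, just infiniteness and finiteness of $F$ make $\mathfrak{A}_G/\mathrm{Jac}(\mathfrak{A}_G)$ a finite ring; the map $F[G]\to\mathfrak{A}_G$ factors through $F[\widetilde{G}]$ for the closure $\widetilde{G}$ of $G$ in $\mathrm{Aut}(T)$, which is a \emph{finitely generated pro-$p$} group, so the induced homomorphism $\widetilde{G}\to(\mathfrak{A}_G/\mathrm{Jac}(\mathfrak{A}_G))^*$ has kernel of finite index, hence open by Serre's theorem \cite[Theorem~1.17]{DDMS99}, and therefore the relevant finite quotient $G/N$ \emph{is} a $p$-group; the local-ring argument for $F[G/N]$ then finishes as in your torsion case. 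So the missing idea in your proposal is the passage to the pro-$p$ closure together with Serre's open-subgroup theorem (or some substitute establishing that the specific finite quotient arising from $\mathfrak{A}_G$ is a $p$-group); without it the non-torsion case of the corollary is not proved.
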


\begin{proof}
  Suppose that $\mathrm{Jac}(\mathfrak{A}_G)\ne \{0\}$.  As the algebra
  $\mathfrak{A}_G$ is just infinite, it follows that
  $\dim_F(\mathfrak{A}_G/\mathrm{Jac}(\mathfrak{A}_G)) < \infty$.
  Since $\lvert F \rvert < \infty$, this implies
  $\lvert \mathfrak{A}_G / \mathrm{Jac}(\mathfrak{A}_G) \rvert <\infty$.

  Now $\chi \colon F[G] \rightarrow \mathfrak{A}_G$ can
    be factored as
    \[
    F[G] \rightarrow F[\widetilde{G}]\rightarrow \mathfrak{A}_G,
    \]
    where $\widetilde{G}$ is the closure of $G$ in the profinite group
    $\text{Aut}(T)$. Observe that $\widetilde{G}$ is a finitely
    generated pro-$p$ group. We obtain an induced group homomorphism
    \[
    \widetilde{\chi} \colon \widetilde{G} \rightarrow
    (\mathfrak{A}_G/\mathrm{Jac}(\mathfrak{A}_G))^*
    \]
    from $\widetilde{G}$ to the unit group of the finite algebra
    $\mathfrak{A}_G/\mathrm{Jac}(\mathfrak{A}_G)$.  Set
    $\widetilde{N} = \ker{\widetilde{\chi}}$, $N=G\cap \widetilde{N}$
    and $I = \chi^{-1}(\mathrm{Jac}(\mathfrak{A}_G))$.  Then
    $\widetilde{N}$ is normal and of finite index in~$\widetilde{G}$
    and thus $\widetilde{N}$ is open in $\widetilde{G}$; see
    \cite[Theorem 1.17]{DDMS99}. Hence $\widetilde{G}/\widetilde{N}$
    is a finite $p$-group. Consequently $G/N$ is a finite $p$-group.
  
    As $\langle x-1 \mid x\in N \rangle \subseteq I$, the homomorphism
    $ F[G] \rightarrow F[G]/I\cong
    \mathfrak{A}_G/\mathrm{Jac}(\mathfrak{A}_G)$
    factors through $F[G/N]$.  Since $G/N$ is a finite $p$-group, it
    follows that $\mathrm{Jac}(F[G/N]) = \mathrm{Aug}(F[G/N])$ and
    therefore
    $\mathrm{Jac}(\mathfrak{A}_G) = \mathrm{Aug}(\mathfrak{A}_G)$.
\end{proof}

\begin{proposition}[{\cite[Proposition~4.22]{Bartholdi}}]
  Let $G \in \mathscr{C}$ be just infinite.  If $\mathfrak{A}_G$ is
  semiprimitive, then it is primitive.
\end{proposition}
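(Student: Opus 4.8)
The plan is to produce a faithful irreducible $\mathfrak{A}_G$-module, equivalently a maximal right ideal of $A:=\mathfrak{A}_G$ with trivial core. Write $V=F\langle\!\langle \partial T\rangle\!\rangle$, so that by construction $A$ acts faithfully on $V$; the task is to replace this faithful but reducible module by a faithful simple one. The first, and routine, step is to deduce from semiprimitivity that $A$ is prime. Indeed, suppose $IJ=0$ for nonzero two-sided ideals $I,J$. By Lemma~\ref{Lemma3.8} the algebra $A$ is just infinite, so $A/I$ and $A/J$ are finite-dimensional; hence $A/(I\cap J)$ embeds into $A/I\oplus A/J$ and is finite-dimensional, while $\dim_F A=\infty$, forcing $K:=I\cap J\neq 0$. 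But $K\subseteq I$ and $K\subseteq J$ give $K^2\subseteq IJ=0$, so $K$ is a nonzero nilpotent ideal and therefore $K\subseteq\mathrm{Jac}(A)$. As $A$ is semiprimitive this is absurd, and we conclude that $A$ is prime.

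Next I would argue by contradiction, assuming that $A$ is not primitive, that is, that every maximal right ideal has nonzero core. Each such core is a nonzero primitive ideal, hence of finite codimension, and $A/\mathrm{core}\cong M_n(\mathbb{F}_q)$ is a finite simple ring because $F$ is finite. Since $A$ is finitely generated over the finite field $F$, for each $d$ there are only finitely many two-sided ideals of codimension at most $d$; thus the primitive ideals form a countable family $P_1,P_2,\ldots$, all nonzero, whose intersection equals $\mathrm{Jac}(A)=0$. Primeness makes every finite product $P_1\cdots P_n$ nonzero, yet the nested ideals $P_1\cap\cdots\cap P_n$ shrink to $0$. The aim is to show that this configuration is incompatible with the internal self-similarity of $A$.

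The decisive, and hardest, step is to exploit the self-similar structure of $\mathfrak{A}_G$. The map $\psi_1$ together with the cyclic permutation of the $p$ first-level subtrees embeds $A$ into $M_p(A)$, and the restriction of $A$ to a first-level subtree reproduces $\mathfrak{A}_G$ together with its augmentation ideal. The plan is to use this recursion to \emph{zoom in}: given a maximal right ideal of $A$ with nonzero core, one restricts to a subtree so as to obtain a right ideal of the isomorphic copy of $A$ whose core is strictly controlled, and then iterates. Following the constructions of Sidki~\cite{Sidki1} and Bartholdi~\cite{Bartholdi}, this descent should either directly produce a simple submodule of $V$ with annihilator $0$, or else contradict the primeness established above. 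I expect this branch-algebra descent to be the main obstacle, since primeness and residual finite-dimensionality do not by themselves force primitivity; what must be proved precisely is that the core genuinely decreases under restriction, so that after finitely many steps it becomes $0$, and it is exactly the self-similarity of $\mathfrak{A}_G$ that makes this possible.
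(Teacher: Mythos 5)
Your first two steps are sound but stop short of a proof: you establish that $A=\mathfrak{A}_G$ is prime and that, if $A$ were not primitive, its nonzero primitive ideals would all have finite codimension and intersect in $0$. You then explicitly defer the decisive step to an unproved ``branch-algebra descent'' and acknowledge that primeness plus residual finite-dimensionality do not force primitivity. That deferred step is exactly where the content lies, and the descent you sketch is not what makes the argument work.

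The missing idea is much more elementary and is group-theoretic rather than self-similar. Let $M$ be a maximal right ideal with $I=\mathrm{core}(M)\ne 0$. Since $\mathfrak{A}_G$ is just infinite and $F$ is finite, $\mathfrak{A}_G/I$ is a \emph{finite} ring, so the image of $G$ in its unit group is finite. Because $G$ lies in the Sylow-pro-$p$ subgroup $S$ of $\mathrm{Aut}(T)$, its closure $\widetilde G$ is a finitely generated pro-$p$ group, whence the kernel of $\widetilde G \to (\mathfrak{A}_G/I)^*$ is open and the corresponding finite quotient $G/N$ is a finite $p$-group; thus $F[G]\to \mathfrak{A}_G/I$ factors through $F[G/N]$. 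Now $F[G/N]$ is a finite \emph{local} ring (modular group algebra of a finite $p$-group), so its unique maximal two-sided ideal is the augmentation ideal; since $I$ pulls back to a maximal two-sided ideal of $F[G/N]$ (the quotient $\mathfrak{A}_G/I$ is a finite simple ring), one gets $I=\mathrm{Aug}(\mathfrak{A}_G)$. Hence the only possible nonzero core is $\mathrm{Aug}(\mathfrak{A}_G)$; if every maximal right ideal had nonzero core, then every maximal right ideal would contain $\mathrm{Aug}(\mathfrak{A}_G)$, forcing $\mathrm{Aug}(\mathfrak{A}_G)\subseteq \mathrm{Jac}(\mathfrak{A}_G)=0$, a contradiction. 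So some maximal right ideal has zero core and $\mathfrak{A}_G$ is primitive. No iteration over subtrees, and in fact no primeness, is needed; the self-similarity enters only through the already-established fact that $\mathfrak{A}_G$ is just infinite.
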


\begin{proof}
  Let $\mathcal{M}$ denote the collection of all maximal right ideals
  of $\mathfrak{A}_G$.  Suppose that $\mathfrak{A}_G$ is
  semiprimitive, i.e., that
  $\bigcap_{M \in \mathcal{M}} M = \mathrm{Jac}(\mathfrak{A}_G) = 0$.
  We need to produce an $M \in \mathcal{M}$ with
  $\mathrm{core}(M) = 0$.  For this it suffices to show that, if
  $M \in \mathcal{M}$ with $\mathrm{core}(M) \ne 0$, then
  $\mathrm{core}(M) = \mathrm{Aug}(\mathfrak{A}_G)$.

  Let $M \in \mathcal{M}$ with $I = \mathrm{core}(M) \ne 0$.  Since
  $\mathfrak{A}_G$ is just infinite and $\lvert F \rvert < \infty$, it
  follows that $\mathfrak{A}_G/I$ is finite.  As in the previous
  proof, there is a normal subgroup $N$ of $G$ such that the
  epimorphism $F[G] \rightarrow \mathfrak{A}_G/I$ factors through the
  group algebra $F[G/N]$ of the finite $p$-group~$G/N$.

  Write $\overline{J}$ for the image of
    $J = \chi^{-1}(I) \subseteq F[G]$ in $F[G/N]$ and observe that
    $\overline{J}$ is a maximal two-sided ideal of $F[G/N]$. However
    in the finite local ring $F[G/N]$ the augmentation ideal
    $\mathrm{Aug}(F[G/N])$ is the only maximal two-sided ideal.  Hence
    we obtain $\overline{J} = \mathrm{Aug}(F[G/N])$ and therefore
    $J= \mathrm{Aug}(F[G])$. Hence
    $I =\chi(J)=\mathrm{Aug}(\mathfrak{A}_G)$.
\end{proof}


\subsection{The depth function}

Let
$G = \langle a, \mathbf{b}^{(1)},\ldots , \mathbf{b}^{(p)}\rangle \in
\mathscr{C}$
be in standard form, acting on the regular $p$-adic rooted tree $T$
with vertices labelled by elements of~$\overline{X}$.
  Recall that $\overline{X}$ consists of all words in the alphabet
  $X = \{1,\ldots,p\}$, and the length of a word
  $\omega \in \overline{X}$ is denoted by $\lvert \omega \rvert$.
Recall that every $g \in G$ can be expressed as
$(g_1,\ldots , g_p) a^{\varepsilon_a(g)}$, where
$\varepsilon_a(g) \in \mathbb{Z}/p\mathbb{Z}$ and
$(g_1,\ldots , g_p) \in G^p$ is short for
$\psi_1^{-1}(g_1,\ldots , g_p)$.  Of course, the decomposition can be
reiterated, giving
$g_\omega = (g_{\omega1},\ldots, g_{\omega
    p})a^{\varepsilon_a(g_\omega)}$
  for any word $\omega \in \overline{X}$.

As in~\cite{Sidki1}, we define a \emph{depth function} 
\begin{multline*}
  d \colon G \rightarrow \mathbb{N}_0, \quad d(g) = \min \{ d
  \in \mathbb{N}_0 \mid \forall \omega \in \overline{X}
  \text{ with } \lvert \omega \rvert \ge d : \\
  (g_{\omega 1},\ldots,g_{\omega p}) \in \langle \mathbf{b}^{(1)} \rangle \cup
  \ldots \cup \langle \mathbf{b}^{(p)} \rangle\}.
\end{multline*} We remark that the function is well-defined, because
by Lemma~\ref{lemma4.1} for any given $g \in G$ the lengths
$\partial(g_\omega)$ decrease down to $0$ or $1$ as
$\lvert \omega \rvert \to \infty$.  Furthermore, we
observe for every $g \in G$:
\begin{enumerate}
\item[$\circ$] $d(g) = d((g_1,\ldots, g_p))$,
\item[$\circ$] $d(g)=0$ if and only if
  $g \in \langle \mathbf{b}^{(1)}\rangle \langle a \rangle \cup \ldots
  \cup \langle \mathbf{b}^{(p)}\rangle \langle a\rangle$,
\item[$\circ$] if $d(g) \ne 0$, then
  $d(g)= \max\{d(g_i) \mid 1 \le i \le p \} + 1$.
\end{enumerate}

We notice that the natural embedding of groups
\[ 
G \hookrightarrow (G \times \overset{p}{\ldots} \times G) \rtimes
\langle a \rangle, \quad g \mapsto \big( (g_1,\ldots,g_p),
a^{\varepsilon_a(g)} \big)
\]
induces a natural embedding of $F$-algebras
\[
\mathfrak{A}_G \hookrightarrow (\mathfrak{A}_G\times \overset{p}{\ldots} \times
\mathfrak{A}_G) \rtimes F\langle a\rangle,
\]
where each $v$ of $\mathfrak{A}_G$ is mapped to
$v_0 + v_1a + \ldots + v_{p-1} a^{p-1}$ with
$v_i=(v_{i,1},\ldots,v_{i,p})\in \mathfrak{A}_G\times
\overset{p}{\ldots}\times \mathfrak{A}_G$ for $0\le i\le p-1$.  

Let
$(\mathfrak{A}_G\times \overset{p}{\ldots} \times \mathfrak{A}_G)^{\circ}$
denote the image of
$F\langle \mathbf{b}^{(1)}\rangle \cup \ldots \cup F\langle
\mathbf{b}^{(p)} \rangle \subseteq F[G]$
in $\mathfrak{A}_G\times \overset{p}{\ldots} \times \mathfrak{A}_G$. Though as the map $\chi$ from Section 6.1 is injective on $G$, we will often identify elements of $G$ with their images in $\mathfrak{A}_G$.

The depth function $d \colon G \rightarrow \mathbb{N}_0$ now extends to
$\mathfrak{A}_G$ as follows: for
$v = v_0 + v_1a + \ldots + v_{p-1}a^{p-1} \in \mathfrak{A}_G$, where
$v_i = (v_{i,1},\ldots, v_{i,p}) \in \mathfrak{A}_G\times
\overset{p}{\ldots} \times \mathfrak{A}_G$
for $1 \le i \le p-1$, we define recursively
\begin{multline*}
  d(v)= \max \big( \{0 \} \cup \{ d(v_{i,j}) + 1 \mid 0 \le i \le p-1
  \text{ and } 1 \le j \le p \\
  \text{ such that } v_i\not \in (\mathfrak{A}_G\times
  \overset{p}{\ldots} \times \mathfrak{A}_G)^{\circ} \} \big).
\end{multline*}
We observe that if $v=v_0$ has $d(v)\ne 0$, then $d(v)>d(v_{0j})$
for $1\le j\le p$.


\subsection{Invertibility} 

With reference to Remark \ref{genGS}, let $H = \langle a,b \rangle$ be a generalised Gupta--Sidki group
acting on the regular $p$-adic rooted $T$; recall that $a$ denotes the
rooted automorphism of order $p$ and $b$ a directed automorphism
defined recursively by
\begin{equation} \label{equ:psi1-b-decomp} \psi_1(b) =
  (a^{e_1},\ldots,a^{e_{p-1}},b), \, \text{where
    $\{e_1,\ldots,e_{p-1}\} = \{1,\ldots,p-1\}$.}
\end{equation}

Let
$G = \langle a, \mathbf{b}^{(1)}, \ldots, \mathbf{b}^{(p)}
\rangle \in \mathscr{C}$
be in standard form and just infinite.  Suppose that $H \le G$, and
say, without loss of generality, that $b = b^{(1)}_1$.  We denote by
$\mathfrak{T}$ the tree enveloping algebra of the Sylow-pro-$p$
subgroup $S \le \mathrm{Aut}(T)$ described in \eqref{equ:Sylow-pro-p} and
write $\mathfrak{A}_G \subseteq \mathfrak{T}$ for the tree
enveloping algebra of $G$, as usual.

For $v \in \mathfrak{T}$, we set
$v^{[1]} = (v,\overset{p}{\ldots},v) \in \mathfrak{T}\times
\overset{p}{\ldots}\times \mathfrak{T} \subseteq \mathfrak{T}$
and recursively, for $i \ge 2$,
\[
v^{[i]}=(v^{[i-1]},\overset{p}{\ldots},v^{[i-1]}) \in
\mathfrak{T}\times \overset{p^i}{\ldots}\times \mathfrak{T} \subseteq
\mathfrak{T}.
\]
Further, identifying elements of $G$ with their images in $\mathfrak{A}_G$, we introduce the notation
\begin{equation} \label{equ:a-b-star}
  \begin{split}
    a_* & = (a-1)^{p-1}=1+a+\ldots +a^{p-1} \in \mathfrak{A}_G, \\
    b_* & = (b - 1)^{p-1}=1+ b + \ldots + b^{p-1} \in
    \mathfrak{A}_G.
\end{split}
\end{equation}
Observe that for $(v_1,\ldots,v_p)\in \mathfrak{T}\times
\overset{p}{\ldots}\times \mathfrak{T} \subseteq \mathfrak{T}$,
\[
a_*(v_1,\ldots,v_p)a_*=(v_1+ \ldots +v_p)^{[1]}a_*. 
\]
 



In the following proof, the idea of the first half comes from
Sidki~\cite[Proposition~5.2]{Sidki1}, and was already used by Vieira~\cite[Theorem~1]{Vieira} for the special case of \emph{the} generalised Gupta--Sidki group.

\begin{lemma} \label{GS} Let
  $G= \langle a, \mathbf{b}^{(1)},\ldots ,
  \mathbf{b}^{(p)}\rangle \in \mathscr{C}$
  be just infinite, and suppose that $\langle a,b \rangle$, for
  $b=b^{(1)}_1$ as in \eqref{equ:psi1-b-decomp}, is a generalised
  Gupta--Sidki group.  Then $1+ba_*$ is not invertible in the tree
  enveloping algebra~$\mathfrak{A}_G$.
\end{lemma}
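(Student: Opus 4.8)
The plan is to combine the nilpotence of $a_*$ with the self-similar contraction rule for expressions $a_*\,(\,\cdot\,)\,a_*$, and then to run an infinite descent on the depth function.

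First I would record two elementary facts about $a_*$. Since $a_* = (a-1)^{p-1}$ and $a^p = 1$ in characteristic $p$, we have $(a-1)a_* = (a-1)^p = a^p - 1 = 0$, whence $a\,a_* = a_*$, and because $a_* = 1 + a + \cdots + a^{p-1}$ also $a_*^2 = p\,a_* = 0$. Next, using $\psi_1(b) = (a^{e_1},\ldots,a^{e_{p-1}},b)$ together with the defining hypothesis $\{e_1,\ldots,e_{p-1}\} = \{1,\ldots,p-1\}$, the sum $a^{e_1}+\cdots+a^{e_{p-1}}$ collapses to $a_* - 1$. Feeding the tuple $\psi_1(b)$ into the contraction identity $a_*(v_1,\ldots,v_p)a_* = (v_1+\cdots+v_p)^{[1]}a_*$ then gives
\[
a_* b a_* = (a_* - 1 + b)^{[1]} a_*, \qquad\text{hence}\qquad a_*(1+ba_*) = (a_*+b)^{[1]} a_*.
\]

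I would then argue by contradiction. As invertibility forces the existence of a right inverse, suppose $(1+ba_*)u = 1$ for some $u \in \mathfrak{A}_G$. Left-multiplying by $a_*$ and substituting the identity above gives $(a_*+b)^{[1]}(a_* u) = a_*$. Writing $a_* u = \sum_{i=0}^{p-1} W^{(i)}a^i$ with tuples $W^{(i)} = (W^{(i)}_1,\ldots,W^{(i)}_p)$ in the wreath embedding $\mathfrak{A}_G \hookrightarrow (\mathfrak{A}_G \times \cdots \times \mathfrak{A}_G)\rtimes F\langle a\rangle$, and noting that the diagonal element $(a_*+b)^{[1]}$ acts by left multiplication on each coordinate while $a_* = \sum_i a^i$ has trivial tuple parts, I compare coefficients of each $a^i$ to conclude $(a_*+b)W^{(i)}_j = 1$ for all $i,j$. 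Thus $a_*+b$ is right-invertible; since $b$ is a unit and $a_*+b = b(1 + b^{-1}a_*)$, so is $1 + b^{-1}a_*$. Crucially $\{-e_1,\ldots,-e_{p-1}\} = \{1,\ldots,p-1\}$ as well, so $\langle a, b^{-1}\rangle$ is again a generalised Gupta--Sidki group and the entire reduction applies verbatim to it.

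This sets up a descent on the depth function $d$. Using that $d$ does not increase under multiplication by the depth-$0$ elements $a_*$ and $b$, so that $d(a_* u) \le d(u)$, and that an element of positive depth admits a first-level coordinate of strictly smaller depth, I would proceed as follows whenever $d(a_* u) \ge 1$: extract a coordinate $W^{(i)}_j$ of $a_* u$ with $d(W^{(i)}_j) < d(a_* u) \le d(u)$; since every coordinate satisfies $(a_*+b)W^{(i)}_j = 1$, the element $W^{(i)}_j b$ is a right inverse of $1 + b^{-1}a_*$ of depth strictly less than $d(u)$. Iterating --- alternating between $b$ and $b^{-1}$, both admissible --- would produce an infinite strictly decreasing sequence of non-negative integers, which is absurd. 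Hence the descent must terminate at the base case $d(a_* u) = 0$, in which $a_*+b$ possesses a right inverse of depth $0$. The main obstacle is precisely this base case: ruling out a depth-$0$ right inverse of $a_*+b$. Here I would exploit the self-similarity $\psi_1(b) = (\ast,\ldots,\ast,b)$, which makes the equation $(a_*+b)z = 1$ reproduce an equation of the same shape in the $p$-th subtree at every level; a depth-$0$ element, whose deep coordinates lie in the subalgebras generated by $\langle \mathbf{b}^{(j)}\rangle$ and $\langle a\rangle$, cannot sustain this reproduction, the obstruction ultimately stemming from $a_*$ having augmentation $0$. Closing this base case contradicts the existence of a right inverse and therefore shows that $1 + ba_*$ is not invertible in $\mathfrak{A}_G$.
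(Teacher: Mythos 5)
Your reduction is sound as far as it goes, and it is genuinely more elementary in setup than the paper's: the identity $a_*ba_*=(a_*-1+b)^{[1]}a_*$, the coefficient comparison giving $(a_*+b)W^{(i)}_j=1$ for every coordinate, and the passage back and forth between right inverses of $1+ba_*$ and of $1+b^{-1}a_*$ are all correct. (The paper instead invokes Vieira's lemma to write $\eta^{-1}=1-\rho a_*$ with $\rho(b+a_*)^{[1]}=b$, and its ``descent'' is not an iteration over a sequence of inverses but a single chain $d(\rho)\ge d(\rho_p)\ge d(v_0)\ge d(v_{0,p})=d(\rho)$ forced into equality, which pins $d(\rho)=0$ in one step.)

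However, there is a genuine gap exactly where you flag ``the main obstacle'': you never rule out a depth-$0$ right inverse of $a_*+b$, and this is where essentially all of the real work in the paper's proof lives. A depth-$0$ element $z$ with $(a_*+b)z=1$ lies (up to the identifications) in $F\langle \mathbf{b}^{(j)}\rangle F\langle a\rangle$ for some $j$, and excluding this is not a soft consequence of ``$a_*$ having augmentation $0$'': note $\varepsilon(a_*+b)=1$, so the augmentation map gives no contradiction, and since $\mathfrak{A}_G$ is a proper quotient of $F[G]$ one cannot appeal to linear independence of group elements either. The paper handles the analogous situation by splitting into the cases $j=1$ (where the recursion collapses to $\rho=\rho a_*$, whence $\rho=0$) and $j\ne 1$ (where one shows a specific coordinate lies in $F\langle\mathbf{b}^{(j)}\rangle\cap F\langle a\rangle=F$ and then runs an explicit computation in $\mathrm{Mat}_p(\mathfrak{A}_G)$ via the wreath embedding to force that scalar to vanish). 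Your sketch does not engage with either case. A secondary, repairable issue: your descent relies on $d$ being non-increasing under multiplication by the depth-$0$ elements $a_*$ and $b$ and under the sums of permuted tuples arising in $a_*u$; since $(\mathfrak{A}_G\times\cdots\times\mathfrak{A}_G)^{\circ}$ is a union of subalgebras rather than a subspace, these monotonicity claims are not automatic from the recursive definition of $d$ and would need to be verified.
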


\begin{proof}
  Let $\eta=1+ba_*$. We suppose that $\eta$ is invertible in the tree enveloping algebra
  $\mathfrak{T}$ of the Sylow-pro-$p$
subgroup $S \le \mathrm{Aut}(T)$.  Then, as the $a \mapsto a$ and
  $b \mapsto b^{-1}$ induce an automorphism of~$G$, we see that
  $\mu= 1+b^{-1}a_*$ is also invertible in~$\mathfrak{T}$.
  By~\cite[Lemma~2]{Vieira}, we may express the inverses as
  \[
  \eta^{-1} = 1-\rho a_* \qquad \text{and} \qquad \mu^{-1}=1-\sigma a_*,
  \]
  where
  $\rho, \sigma \in \mathfrak{T} \times \overset{p}{\ldots} \times
  \mathfrak{T} \subseteq \mathfrak{T}$ such that
  \begin{equation} \label{eq:1}
    \rho(b+a_*)^{[1]}  = b  \qquad \text{and} \qquad
    \sigma(b^{-1}+a_*)^{[1]} = b^{-1}. 
  \end{equation}
  
  Multiplying on the right by
  $(\mu^{-1})^{[1]} = (1-\sigma a_*)^{[1]}$, we obtain
  from the first equation in~\eqref{eq:1},
  \[
  \rho b^{[1]} = \rho (b \mu)^{[1]} (\mu^{-1})^{[1]} = b(1-\sigma a_*)^{[1]}
  \]
  and hence
  \begin{equation} \label{eq:3} \rho=b(1-\sigma
    a_*)^{[1]}(b^{-1})^{[1]}.
  \end{equation}
  Similarly, we deduce
  \begin{equation} \label{eq:4} \sigma = b^{-1}(1-\rho a_*)^{[1]}b^{[1]}.
  \end{equation}

  Substituting (\ref{eq:4}) in (\ref{eq:3}) gives
  \begin{equation} \label{equ:rho-equation}
    \rho=b(b^{-1})^{[1]}-b(b^{-1})^{[1]} (1 - \rho
      a_*)^{[2]} b^{[2]}a_*^{[1]}(b^{-1})^{[1]}.
  \end{equation}

  Assume for a contradiction that
  $\eta^{-1} \in \mathfrak{A}_G$, and hence
  $\rho=(\rho_1,\ldots, \rho_p)\in \mathfrak{A}_G\times
  \overset{p}{\ldots}\times \mathfrak{A}_G$.
  Recalling the depth function $d$, we observe that
  $d(\rho) \ge d(\rho_p)$, where $\rho_p$ can be expressed according
  to~\eqref{equ:rho-equation} as
  \begin{align*}
    \rho_p & = 1-(1-\rho a_*)^{[1]}b^{[1]}a_*b^{-1} \\
           & = 1-(1-\rho
             a_*)^{[1]}b^{[1]}(b^{-1}+b^{-a^{-1}}a+\ldots +b^{-a}a^{p-1}).
  \end{align*}
  Now writing $\rho_p=v_0+v_1a+\ldots +v_{p-1}a^{p-1}$ with
  $v_i\in \mathfrak{A}_G\times \overset{p}{\ldots}\times
  \mathfrak{A}_G$
  for $0\le i\le p-1$, we obtain $d(\rho_p) \ge d(v_0)$, where
  \[
  v_0 = 1-(1-\rho a_*)^{[1]}b^{[1]}b^{-1}.
  \]
  Writing $v_0 = (v_{0,1},\ldots,v_{0,p})$, we see that
  $v_{0,p} = \rho a_*$ and hence $d(v_0)\ge d(v_{0,p})=d(\rho)$.  The
  inequalities $d(\rho)\ge d(\rho_p)\ge d(v_0)\ge d(\rho)$ imply
  equality throughout and hence $d(\rho)=0$ which is equivalent
  to
  $\rho \in F \langle \mathbf{b}^{(j)} \rangle \subseteq
  \mathfrak{A}_G$ for some $j\in \{1,\ldots, p\}$. Recall that as before we identify $F \langle \mathbf{b}^{(j)} \rangle$ with its image in $\mathfrak{A}_G$.

  If $j=1$, then $\rho=\rho_p=v_0=v_{0,p}=\rho a_*$, and therefore
  $\rho=0$, giving us the required contradiction.  Now suppose that
  $j\ne 1$.  According to~\eqref{equ:rho-equation}, the coordinates of
  $\rho = (\rho_1, \ldots, \rho_p)$ are
  \begin{align*}
    \rho_i & = a^{e_i}b^{-1} - a^{e_i}b^{-1} (1-\rho
             a_*)^{[1]} b^{[1]} a_* b^{-1} \quad \text{for $1 \leq i \leq
             p-1$,} \\
    \rho_p & =1 - (1-\rho a_*)^{[1]}b^{[1]}a_*b^{-1}.
  \end{align*} 
  Moreover, $\rho \in F \langle \mathbf{b}^{(j)}\rangle$ implies that
  $\rho _{p-j+1} \in F \langle \mathbf{b}^{(j)}\rangle$ and $\rho_1$,
  \ldots, $\rho_{p-j}$,$\rho_{p-j+2}$, \ldots,
  $\rho_p \in F \langle a\rangle $.  As $\rho_1,\ldots, \rho_{p-1}$
  all differ by a left multiple of $a$, it follows
  that
  $\rho_{p-j+1}\in F \langle \mathbf{b}^{(j)} \rangle \cap F \langle
  a\rangle =F$.  Consequently, there is a $\lambda \in F$ such that
  \begin{equation} \label{equ:p-j+1-formula}
    a^{e_{p-j+1}}b^{-1}(1-(\eta^{-1}b)^{[1]}a_*b^{-1})=\lambda.
  \end{equation}

  Our next aim is to express~\eqref{equ:p-j+1-formula} in matrix form
  via the embedding
  \begin{equation} \label{equ:algebra-embedding} \varphi \colon
    \mathfrak{A}_G\rightarrow \mathrm{Mat}_p(\mathfrak{A}_G)
  \end{equation}
  induced by
  \[
  a \mapsto
  \begin{pmatrix}
    0 & 1 & 0 &  \ldots & 0 \\
    0 & 0 & 1 &  \ldots & 0 \\
    \vdots & & \ddots & \ddots   & \vdots \\
    0 & \ldots & \ldots &  0 & 1 \\
    1 & 0 & \ldots & \ldots & 0
  \end{pmatrix}
  \quad \text{ and } \quad (c_1,\ldots,c_p) \mapsto
  \begin{pmatrix}
    c_1 &  & &   &  \\
    & c_2 &  &   &  \\
    & & \ddots &    &  \\
    &  &  &  c_{p-1} &  \\
    & & & \ & c_p
  \end{pmatrix};
  \]
  cf.\ \cite[Section 3]{Bartholdi}.  First we compute the relevant
  terms individually: 

  {\small
    \begin{align*}
      \varphi(a^{p-j+1}) & = 
                           \begin{pmatrix}
                             0 &  \text{Id}_{j-1}  \\
                             \text{Id}_{p-j+1} & 0
                           \end{pmatrix},
                                                 \quad \quad                                          
                                                 \varphi(a_*) = 
                                                 \begin{pmatrix}
                                                   1 & & \ldots & & 1  \\
                                                   & & & & \\
                                                   \vdots & & \ddots & & \vdots  \\
                                                   & & & & \\
                                                   1 & & \ldots & & 1
                                                 \end{pmatrix},
      \\ 
      \varphi(b^{-1}) & = 
                        \begin{pmatrix}
                          a^{-e_1} &  & &   &  \\
                          & a^{-e_2} &  &   &  \\
                          & & \ddots &    &  \\
                          &  &  &  a^{-e_{p-1}} &  \\
                          & & & \ & b^{-1}
                        \end{pmatrix},
      \\
      \varphi((\eta^{-1}b)^{[1]}) & = 
                                    \begin{pmatrix}
                                      \eta^{-1}b &  & &   &  \\
                                      & \eta^{-1}b &  &   &  \\
                                      & & \ddots &    &  \\
                                      &  &  &  \eta^{-1}b &  \\
                                      & & & \ & \eta^{-1}b
                                    \end{pmatrix}.
    \end{align*}
  }Combining these terms, we deduce from~\eqref{equ:p-j+1-formula},
  \begin{multline*}
    \lambda \text{Id}_p = \varphi
    \big(a^{e_{p-j+1}}b^{-1}(1-(\eta^{-1}b)^{[1]}a_*b^{-1}) \big) =
    \\
    \begin{pmatrix}
      -a^{-e_{p-j+2}}\eta^{-1}ba^{-e_1} &   \ldots &  -a^{-e_{p-j+2}}\eta^{-1}ba^{-e_{p-1}} & -a^{-e_{p-j+2}}\eta^{-1} \\
      \vdots &    & \vdots  & \vdots \\
      -b^{-1}\eta^{-1}ba^{-e_1} &  \ldots & -b^{-1}\eta^{-1}ba^{-e_{p-1}} & b^{-1}(1-\eta^{-1}) \\
      a^{-e_1}(1-\eta^{-1}ba^{-e_1})  & \ldots & -a^{-e_1}\eta^{-1}ba^{-e_{p-1}} & -a^{-e_1}\eta^{-1} \\
      \vdots &  & \vdots  & \vdots \\
      -a^{-e_{p-j+1}}\eta^{-1}ba^{-e_1} & \ldots &
      -a^{-e_{p-j+1}}\eta^{-1}ba^{-e_{p-1}} & -a^{-e_{p-j+1}}\eta^{-1}
    \end{pmatrix}.
  \end{multline*}
  Comparing the first and last terms of the first row, we
  see that $\lambda = 0$. Hence $\rho =0$, and therefore $\eta ^{-1}=1$, which cannot be possible. Thus, we have our contradiction.
\end{proof}

\begin{remark}
  In~\cite[Theorem~12.3]{Trends} Passman claims that
  Sidki's proof~\cite{Sidki1} for the Gupta--Sidki $3$-group follows
  through for Gupta--Sidki $p$-groups for all primes $p \geq 3$.
  However, \eqref{eq:1} does not hold for Gupta--Sidki
  $p$-groups, where $p\ge 5$.  One instead obtains the following equation
  \[
   \rho(b+a+a^{-1}+p-1)^{[1]}=b
  \]
  which prevents us from going on to the next deduction, as $a+a^{-1}+p-1\ne a_*$. We do not know of a way to skirt around
  this.
\end{remark}

\begin{proposition} \label{afterLemma6.4} Let
  $G=\langle a, \mathbf{b}^{(1)},\ldots , \mathbf{b}^{(p)}\rangle \in
  \mathscr{C}$
  be just infinite, and suppose that $\langle a,b \rangle$, for
  $b=b^{(1)}_1$ as in \eqref{equ:psi1-b-decomp}, is a generalised
  Gupta--Sidki group. Then $\mathrm{Jac}(\mathfrak{A}_G)=0$.
\end{proposition}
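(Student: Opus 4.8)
The plan is to deduce the vanishing of the Jacobson radical from the dichotomy in Corollary~\ref{C4.4.3} together with the non-invertibility established in Lemma~\ref{GS}. Since $G$ is just infinite, Corollary~\ref{C4.4.3} guarantees that $\mathrm{Jac}(\mathfrak{A}_G)$ is either the zero ideal or equals $\mathrm{Aug}(\mathfrak{A}_G)$. It therefore suffices to exclude the second alternative, which I would do by producing a single element of $\mathrm{Aug}(\mathfrak{A}_G)$ that fails to lie in $\mathrm{Jac}(\mathfrak{A}_G)$.

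The element I would use is $ba_*$, where $b = b^{(1)}_1$ and $a_* = 1 + a + \ldots + a^{p-1}$, exactly as in Lemma~\ref{GS}. First I would verify that $ba_* \in \mathrm{Aug}(\mathfrak{A}_G)$: under the augmentation map $F[G] \to F$ each power $a^i$ maps to $1$, so $a_*$ has augmentation $p = 0$ in $F = \mathbb{Z}/p\mathbb{Z}$, whence $ba_*$ has augmentation $0$ as well and its image lies in $\mathrm{Aug}(\mathfrak{A}_G)$. This is the one place where the characteristic-$p$ hypothesis enters.

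Then I would invoke the standard characterisation of the Jacobson radical: every element $r$ of $\mathrm{Jac}(\mathfrak{A}_G)$ has the property that $1 + r$ is a unit, since $\mathrm{Jac}(\mathfrak{A}_G)$ is a two-sided ideal, so $-r$ lies in it too and $1 - (-r)$ is invertible. Lemma~\ref{GS} states precisely that $1 + ba_*$ is \emph{not} invertible in $\mathfrak{A}_G$; hence $ba_* \notin \mathrm{Jac}(\mathfrak{A}_G)$. Combined with $ba_* \in \mathrm{Aug}(\mathfrak{A}_G)$, this yields $\mathrm{Jac}(\mathfrak{A}_G) \ne \mathrm{Aug}(\mathfrak{A}_G)$, and Corollary~\ref{C4.4.3} then forces $\mathrm{Jac}(\mathfrak{A}_G) = 0$, as required.

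Since the genuine difficulty is already packaged into Lemma~\ref{GS}, I do not anticipate a serious obstacle here. The only points demanding care are to apply the invertibility test inside $\mathfrak{A}_G$ itself rather than the ambient algebra $\mathfrak{T}$, and to keep track of the fact that $\mathrm{Jac}(\mathfrak{A}_G)$ and $\mathrm{Aug}(\mathfrak{A}_G)$ are being compared as ideals of $\mathfrak{A}_G$.
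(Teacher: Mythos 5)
Your proposal is correct and follows essentially the same route as the paper: both arguments combine the dichotomy of Corollary~\ref{C4.4.3} with the quasi-invertibility characterisation of the Jacobson radical and the non-invertibility of $1+ba_*$ from Lemma~\ref{GS} (the paper phrases it as ``$1+\lambda a_*$ is not invertible for some $\lambda$'' using $a_*\in\mathrm{Aug}(\mathfrak{A}_G)$, while you verify $ba_*\in\mathrm{Aug}(\mathfrak{A}_G)$ directly, which amounts to the same thing). No gaps.
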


\begin{proof}
  As $\mathrm{Jac}(\mathfrak{A}_G)$ is a quasi-invertible ideal and as
  $a_*\in \mathrm{Aug}(\mathfrak{A}_G)$, it suffices by
  Corollary~\ref{C4.4.3}, to prove that $1+\lambda a_*$ is not
  invertible in $\mathfrak{A}_G$ for some
  $\lambda \in \mathfrak{A}_G$. The statement of the proposition now
  follows from Lemma~\ref{GS}.
\end{proof}


\subsection{Graded ideals}

Let
$G=\langle a, \mathbf{b}^{(1)},\ldots , \mathbf{b}^{(p)}\rangle \in
\mathscr{C}$
be in standard form and just infinite.  By conjugation, we may assume
that $r_1\ne 0$ and we write $b = b^{(1)}_1$.  We observe that the
elements $a_*, b_* \in \mathfrak{A}_G$ defined in \eqref{equ:a-b-star}
satisfy $a_*^2=b_*^2=0$.  Furthermore we write
$b_*=(a_*\epsilon_1,\ldots,a_*\epsilon_{p-1},b_*)$ where for
$1\le i\le p-1$,
\[
\epsilon_i= \begin{cases} 
0 & \text{if } e_i = 0 \text{ in } \mathbb{Z}/p\mathbb{Z}, \\
1 & \text{otherwise}.
\end{cases}
\]

Several preliminary results will be needed; the first is by
Smoktunowicz.

\begin{theorem}[{\cite[Lemma~4.24]{Bartholdi} and
      \cite[Theorem~1.1]{Agata}}] \label{Agata} Let
  $\mathfrak{I}=\oplus_{i=1}^{\infty}\mathfrak{I}_i$ be a graded
  algebra (without unit) generated in degree $1$.  Then the following
  are equivalent:
  \begin{enumerate}
  \item[(1)] $\mathrm{Jac}(\mathfrak{I})=\mathfrak{I}$; 
  \item[(2)] for every choice of $n,m\in \mathbb{N}$,
    all $n\times n$ matrices with entries in $\mathfrak{I}_m$ are
    nilpotent.
  \end{enumerate}
\end{theorem}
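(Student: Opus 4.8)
The plan is to prove the two implications separately, passing to the unitalisation $\mathfrak{I}^1 = F \oplus \mathfrak{I}$ (with $F$ sitting in degree~$0$) whenever an inverse is needed, and to exploit throughout that the strictly positive grading forces any quasi-inverse to be \emph{finitely supported}. Two reductions will be used. First, the standard fact that a non-unital ring equals its own Jacobson radical if and only if each of its elements is left quasi-regular, i.e.\ for every $r$ there is $u$ with $u + r + ur = 0$; the same characterisation applies to $\mathrm{Mat}_n(\mathfrak{I})$, whose radical is $\mathrm{Mat}_n(\mathrm{Jac}(\mathfrak{I}))$. Second, since $\mathfrak{I}$ is generated in degree~$1$, one has $\mathfrak{I}_k = \mathfrak{I}_1^{\,k}$, so membership questions can be localised to the degree-$1$ part; I would also keep in mind that the Jacobson radical of an $\mathbb{N}_{\ge 1}$-graded ring is homogeneous, by Bergman's theorem.

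The computational core of both directions is a single observation about homogeneous elements. If $h \in \mathfrak{I}_m$ admits a left quasi-inverse $u \in \mathfrak{I}$, then solving $u + h + uh = 0$ degree by degree shows that $u$ is supported only in degrees that are multiples of $m$, with homogeneous components $u_{km} = (-1)^k h^k$. Because $u$ lies in the direct sum $\mathfrak{I} = \bigoplus_i \mathfrak{I}_i$ and is therefore a finite sum of homogeneous pieces, this forces $h^k = 0$ for all large $k$. Hence a homogeneous element is left quasi-regular precisely when it is nilpotent, and the identical computation applies word for word in the graded matrix ring $\mathrm{Mat}_n(\mathfrak{I}) = \bigoplus_i \mathrm{Mat}_n(\mathfrak{I}_i)$ to a homogeneous matrix.

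For the implication $(1) \Rightarrow (2)$ I would argue directly and briefly. Fix $n, m$ and a matrix $A \in \mathrm{Mat}_n(\mathfrak{I}_m)$. From $\mathrm{Jac}(\mathrm{Mat}_n(\mathfrak{I})) = \mathrm{Mat}_n(\mathrm{Jac}(\mathfrak{I})) = \mathrm{Mat}_n(\mathfrak{I})$ we see that $A$ lies in the radical of $\mathrm{Mat}_n(\mathfrak{I})$, so in particular $A$ is left quasi-regular there. Since $A$ is homogeneous of degree $m$, the computation of the previous paragraph yields $A^k = 0$ for some $k$, i.e.\ $A$ is nilpotent. This settles the easy direction with no further input.

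The substance lies in $(2) \Rightarrow (1)$, which is essentially the cited theorem of Smoktunowicz. By the first reduction it suffices to show every element of $\mathfrak{I}$ is left quasi-regular. For a homogeneous element this is immediate from the core observation together with the hypothesis, whose case $n = 1$ already says that every element of each $\mathfrak{I}_m$ is nilpotent. The genuine difficulty is an inhomogeneous element $r = r_1 + \cdots + r_s$: solving $u + r + ur = 0$ now expresses each $u_k$ as a sum of products $r_{i_1} \cdots r_{i_t}$ of total degree $k$, and one must show these vanish once $k$ is large. My plan would be to homogenise, encoding the mixed-degree data $r_1, \ldots, r_s$ as a single matrix and using generation in degree~$1$ to rescale all of its entries into one fixed component $\mathfrak{I}_m$, arranged so that the long products controlling the $u_k$ appear exactly as the entries of the powers of that matrix; hypothesis~$(2)$ then makes the matrix nilpotent and terminates the recursion. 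The hard part will be making this reduction faithful: choosing the matrix size and the degree $m$ so that nilpotency of $\mathrm{Mat}_n(\mathfrak{I}_m)$ controls precisely the products that occur, neither more nor fewer. This bookkeeping is what \cite{Agata} carries out, by extracting from any failure of radicality a simple module and then, via the Jacobson density theorem, a non-nilpotent finite matrix over some $\mathfrak{I}_m$; everything surrounding it is formal.
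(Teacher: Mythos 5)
The paper does not prove this statement at all: Theorem~\ref{Agata} is imported verbatim from \cite[Lemma~4.24]{Bartholdi} and \cite[Theorem~1.1]{Agata} and used as a black box, so there is no in-paper argument to compare yours against. Judged on its own terms, your proposal is sound where it is actually carried out. The central computation is correct: solving $u+h+uh=0$ degree by degree for homogeneous $h\in\mathfrak{I}_m$ forces $u_{km}=(-1)^k h^k$ and $u_j=0$ otherwise, and finiteness of the support of $u$ in $\bigoplus_i\mathfrak{I}_i$ gives $h^k=0$; the converse (nilpotent implies quasi-regular) is the finite geometric series. Transporting this to $\mathrm{Mat}_n(\mathfrak{I})=\bigoplus_i\mathrm{Mat}_n(\mathfrak{I}_i)$ and combining it with $\mathrm{Jac}(\mathrm{Mat}_n(\mathfrak{I}))=\mathrm{Mat}_n(\mathrm{Jac}(\mathfrak{I}))$ (valid for non-unital rings via $\mathrm{Jac}(I)=I\cap\mathrm{Jac}(S)$ for an ideal $I\trianglelefteq S=\mathrm{Mat}_n(\mathfrak{I}^1)$) gives a complete and clean proof of $(1)\Rightarrow(2)$.

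The gap, which you candidly flag yourself, is that $(2)\Rightarrow(1)$ is only a plan. Homogeneous elements are handled by your core observation with $n=1$, but for an inhomogeneous $r=r_1+\cdots+r_s$ the recursion $u_k=-r_k-\sum u_{k-i}r_i$ produces signed sums of products $r_{i_1}\cdots r_{i_t}$ of total degree $k$, and showing these eventually vanish is exactly the content of Smoktunowicz's theorem; your proposed homogenisation-by-matrices is the right heuristic but the choice of $n$, $m$ and the rescaling into a single $\mathfrak{I}_m$ (using $\mathfrak{I}_k=\mathfrak{I}_1^{\,k}$) is where all the difficulty lives, and you do not carry it out, instead deferring to \cite{Agata}. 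Since the paper itself only cites this direction, that deferral is defensible as a reading of the literature, but your text should not be mistaken for a self-contained proof of the equivalence: one implication is proved, the other is a referenced result with a sketch of its strategy.
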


In the following, the proof is a slight modification of that
in~\cite[Proposition~4.1.1]{Sidki1}, due to the allowance of arbitrary
exponents $e_i$.

\begin{proposition} \label{poly} Let
    $G=\langle a, \mathbf{b}^{(1)},\ldots , \mathbf{b}^{(p)}\rangle
    \in \mathscr{C}$
    be just infinite. The subalgebra $F[b_*a_*]$ of $\mathfrak{A}_G$
  generated by $b_* a_*$ is a polynomial algebra. In
    particular, $b_*a_*$ is not nilpotent, and
    $\mathrm{Aug}(\mathfrak{A}_G)$ is not nil.
\end{proposition}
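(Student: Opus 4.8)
The plan is to prove the structural claim that $y := b_* a_*$ is transcendental over $F$, i.e.\ that $1, y, y^2, \ldots$ are $F$-linearly independent; this is exactly the assertion that $F[y]$ is a polynomial algebra. The two ``in particular'' statements then follow at once: no power of $y$ vanishes, so $y$ is not nilpotent; and since $a_*$ and $b_*$ each have augmentation $p = 0$ they lie in $\mathrm{Aug}(\mathfrak{A}_G)$, whence so does $y$, and a non-nilpotent element of $\mathrm{Aug}(\mathfrak{A}_G)$ shows that ideal is not nil.

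The engine is a self-similar recursion for the powers of $y$. Writing $\psi_1(b_*) = (d_1, \ldots, d_p)$ with $d_i = a_* \epsilon_i$ for $i < p$ and $d_p = b_*$, we have $y = (d_1, \ldots, d_p)\, a_*$. Repeated use of the identity $a_*(v_1, \ldots, v_p) a_* = (v_1 + \cdots + v_p)^{[1]} a_*$ yields, by an easy induction,
\[
y^n = (d_1 w^{n-1}, \ldots, d_p w^{n-1})\, a_*, \qquad w = d_1 + \cdots + d_p = s a_* + b_*,
\]
where $s = \sum_{i=1}^{p-1} \epsilon_i \in F$. In particular all $a^m$-components of $y^n$ coincide, and the $p$-th coordinate of the common tuple is $d_p w^{n-1} = b_* w^{n-1}$. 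Since $a_*^2 = b_*^2 = 0$, expanding $w^{n-1}$ leaves only the two alternating words in $a_*, b_*$, and left multiplication by $b_*$ annihilates the one beginning with $b_*$; one gets $b_* w^{n-1} = s^{\lceil (n-1)/2 \rceil} P_n$, where $P_n = b_* a_* b_* \cdots$ is the alternating product of length $n$. Crucially $P_{2k} = y^k$, and the elementary relations $P_{2k} a_* = 0$ and $P_{2k+1} a_* = y^{k+1}$ hold. The one place where arbitrary exponents $e_i$ intervene is the scalar $s$: it counts the non-zero entries of the defining vector of $b$ modulo $p$, and since that vector is non-zero with only $p-1$ coordinates, $1 \le s \le p-1$, so $s \ne 0$ in $F$ and every scalar $s^{\lceil (n-1)/2 \rceil}$ is a unit.

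With this in hand I would run a descent. Let $I = \{ f \in F[x] \mid f(y) = 0 \}$ and suppose, for contradiction, that $I \ne 0$; choose $f = \sum_{n=0}^N c_n x^n \in I$ of minimal degree $N$, with $c_N \ne 0$. First one rules out $N \le 1$: the automorphisms $a$ and $b$ each have an orbit of size $p$ on $\partial T$, so $a_*, b_* \ne 0$ in $\mathfrak{A}_G$; if $y = \lambda \in F$ then $0 = y a_* = \lambda a_*$ forces $\lambda = 0$, whence $y = 0$, contradicting that the $p$-th coordinate of $y$ equals $b_* \ne 0$. For $N \ge 2$, reading off the $p$-th coordinate of the $a^m$-component ($m \ne 0$) of the relation $f(y) = 0$ produces
\[
\sum_{n \ge 1} c_n\, s^{\lceil (n-1)/2 \rceil} P_n = 0.
\]
Splitting by parity of $n$ and using $P_{2k} = y^k$, $P_{2k+1} = y^k b_*$, this rewrites as $A + B b_* = 0$ with $A, B \in F[y]$. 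Multiplying on the right by $a_*$ kills $A a_*$ and sends $B b_* a_*$ to $B y$, so $x B(x) \in I$; and if $B = 0$ then already $A = 0 \in I$. In each parity case the resulting polynomial is non-zero of degree strictly less than $N$, contradicting minimality; hence $I = 0$.

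The step I expect to be the main obstacle is exactly this parity bookkeeping. The $p$-th coordinate map halves the exponent cleanly only on the even alternating products $P_{2k} = y^k$; the odd products $P_{2k+1}$ are \emph{not} powers of $y$, so the descent must disentangle the two parities via the auxiliary right multiplication by $a_*$ and then check, separately and including the small cases $N = 2, 3$, that the degree genuinely drops. Keeping the exponents of $s$ and the degree bounds exactly right is where the care lies, and the non-vanishing $s \ne 0$ --- the single point at which allowing arbitrary $e_i$ rather than Sidki's original vector matters --- is what lets the scheme go through.
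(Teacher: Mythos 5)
Your proof is correct and follows essentially the same route as the paper's: the recursion $y^n=(d_1w^{n-1},\ldots,d_pw^{n-1})a_*$ is a unified repackaging of the paper's even/odd formulas \eqref{equ:dagger} (your scalar $s$ is the paper's $N$), and extracting the last coordinate, splitting by parity via $a_*^2=b_*^2=0$, and running a minimal-degree descent is exactly the paper's argument. The only difference is cosmetic, in the endgame: the paper reduces to $\deg f\le 2$ and disposes of $t$ and $t^2+ct$ by direct computation, whereas you rule out degree $\le 1$ up front and let the descent handle all degrees $\ge 2$ uniformly.
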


\begin{proof}
  We denote $b_* a_*$ by $X$ and $a_*b_*$ by $Y$.  Let
  \[
  N = \lvert \{e_i\mid e_i \ne 0 \text{ for } 1 \le i \le
  p-1\} \rvert,
  \]
  regarded as an element of~$(\mathbb{Z}/p\mathbb{Z})^*$.  By
  induction,
  \begin{align}
 \label{equ:dagger}
 \begin{split}
   X^{2j-1} & = N^{j-1} (\epsilon_1 Y^{j-1}a_*,
   \ldots,\epsilon_{p-1}Y^{j-1}a_*, X^{j-1}b_*)a_*
   \\
   X^{2j} & = N^{j-1} (\epsilon_1 Y^j, \ldots, \epsilon_{p-1}Y^j, NX^j) a_*
  \end{split}
 \end{align}
 for all $j\ge 1$.
 
 For a contradiction, assume that $X$ is a root of some non-zero
 polynomial $f \in F[t]$ of minimal degree
 $m = \deg(f) \in \mathbb{N}$, say. Note that $f$ has zero constant term, as otherwise it would imply that $X=b_*a_*$ were invertible. Now
 \[ 
 0=f(X)=(l_1,\ldots,l_{p-1},q)a_*
 \]
 for some $l_1,\ldots,l_{p-1},q \in \mathfrak{A}_G$, and from
 \eqref{equ:dagger} we deduce that
 \[
 0 = q = q_0(X) + q_1(X)b_*,
 \]
 where $q_0 ,q_1 \in F[t]$  satisfy $\max\{\deg(q_0),
 \deg(q_1) \} = \lfloor m/2 \rfloor$, and the polynomial $q_0$ also has zero constant term.
   
 Now since $0= qX = q_0(X)X$ and $0=qa_*=q_1(X)X$, it follows that $X$
 is a root of each of the polynomials $q_0(t)t, q_1(t)t$.  We conclude
 that $m \in \{1,2\}$ by minimality. Thus $f$ is either
 $t$ or $t^2+ct$ for some constant $c\in
   F$.
   It follows by a direct computation, with the aid of
   \eqref{equ:dagger}, that all these cases lead to a contradiction.
\end{proof}





\begin{proposition} \label{gradedzero} Let
  $G=\langle a, \mathbf{b}^{(1)},\ldots , \mathbf{b}^{(p)}\rangle \in
  \mathscr{C}$
  be in standard form and just infinite.  If
  $\mathrm{Aug}(\mathfrak{A}_G)$ is a graded algebra with
    the elements $a-1$ and $b_i^{(j)}-1$ for $1\le j\le p$,
    $\, 1\le i\le r_j$ being homogeneous, then
  $\mathrm{Jac}(\mathfrak{A}_G)=0$.
\end{proposition}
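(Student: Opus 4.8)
The plan is to combine the dichotomy in Corollary~\ref{C4.4.3} with Smoktunowicz's criterion from Theorem~\ref{Agata} and the non-nilpotence recorded in Proposition~\ref{poly}. By Corollary~\ref{C4.4.3} the radical $\mathrm{Jac}(\mathfrak{A}_G)$ is either $0$ or all of $\mathrm{Aug}(\mathfrak{A}_G)$, so it suffices to exclude the latter possibility. Accordingly, I would argue by contradiction and assume $\mathrm{Jac}(\mathfrak{A}_G) = \mathrm{Aug}(\mathfrak{A}_G) =: \mathfrak{I}$.

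Under the hypothesis, $\mathfrak{I}$ is a graded algebra without unit, $\mathfrak{I} = \bigoplus_{i \geq 1} \mathfrak{I}_i$, generated in degree~$1$ by the homogeneous elements $a-1$ and $b^{(j)}_i - 1$. The crucial bridging step is to pass from the radical of the unital algebra $\mathfrak{A}_G$ to the radical of $\mathfrak{I}$ viewed as a ring without unit. For this I would invoke the standard identity $\mathrm{Jac}(\mathfrak{I}) = \mathfrak{I} \cap \mathrm{Jac}(\mathfrak{A}_G)$, valid for any two-sided ideal; our assumption then yields $\mathrm{Jac}(\mathfrak{I}) = \mathfrak{I} \cap \mathfrak{I} = \mathfrak{I}$, which is precisely condition~(1) of Theorem~\ref{Agata}.

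Theorem~\ref{Agata} now delivers condition~(2): for all $n, m \in \mathbb{N}$ every $n \times n$ matrix over $\mathfrak{I}_m$ is nilpotent. Taking $n = 1$, each homogeneous element of $\mathfrak{I}$ is nilpotent. To reach a contradiction I would produce a non-nilpotent homogeneous element. Since $a_* = (a-1)^{p-1}$ and $b_* = (b-1)^{p-1}$ with $b = b^{(1)}_1$, both $a_*$ and $b_*$ are homogeneous, and hence so is $b_* a_* \in \mathfrak{I}_{2(p-1)}$. By Proposition~\ref{poly} the subalgebra $F[b_* a_*]$ is a polynomial algebra, so $b_* a_*$ is not nilpotent, contradicting the case $n=1$, $m = 2(p-1)$ of condition~(2). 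Hence $\mathrm{Jac}(\mathfrak{A}_G) \neq \mathrm{Aug}(\mathfrak{A}_G)$, and Corollary~\ref{C4.4.3} forces $\mathrm{Jac}(\mathfrak{A}_G) = 0$.

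I expect the main obstacle to be the passage $\mathrm{Jac}(\mathfrak{I}) = \mathfrak{I}$, that is, correctly relating the Jacobson radical of the non-unital algebra $\mathfrak{I} = \mathrm{Aug}(\mathfrak{A}_G)$ to that of the ambient unital algebra $\mathfrak{A}_G$, together with the verification that the given grading places the distinguished generators in degree~$1$, so that $\mathfrak{I}$ is genuinely generated in degree~$1$ and Theorem~\ref{Agata} applies. Everything else follows by assembling the cited results.
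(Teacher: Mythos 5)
Your proposal is correct and follows essentially the same route as the paper: both arguments combine the dichotomy of Corollary~\ref{C4.4.3}, the non-nilpotence of the homogeneous element $b_*a_*$ from Proposition~\ref{poly}, and the $n=1$ case of Theorem~\ref{Agata} to rule out $\mathrm{Jac}(\mathfrak{A}_G)=\mathrm{Aug}(\mathfrak{A}_G)$. The only cosmetic difference is that you bridge via $\mathrm{Jac}(\mathfrak{I})=\mathfrak{I}\cap\mathrm{Jac}(\mathfrak{A}_G)$ where the paper cites the equivalent identity $\mathrm{Jac}(\mathrm{Jac}(\mathfrak{A}_G))=\mathrm{Jac}(\mathfrak{A}_G)$.
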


\begin{proof}
  It follows that $X=b_*a_*$ is homogeneous of degree $m$, say.
  Proposition~\ref{poly} shows that condition (2) of
  Theorem~\ref{Agata} does not hold for $n=1$. Therefore we obtain
  $\mathrm{Jac}(\mathrm{Aug}(\mathfrak{A}_G)) <
  \mathrm{Aug}(\mathfrak{A}_G)$.
  
  Now it follows from Corollary~\ref{C4.4.3} that
  $\mathrm{Jac}(\mathfrak{A}_G) = \mathrm{Aug}(\mathfrak{A}_G)$ or
  $\mathrm{Jac}(\mathfrak{A}_G) = 0$. However as
  $\mathrm{Jac}(\mathrm{Jac}(\mathfrak{A}_G))=\mathrm{Jac}(\mathfrak{A}_G)$
  (see~\cite[Exercise 4.7]{Lam}), we must have
  $\mathrm{Jac}(\mathfrak{A}_G)=0$.
\end{proof}

We can now give a proof of our main result.

\begin{proof}[Proof of Theorem~\ref{primitive}]
  From Proposition~\ref{afterLemma6.4} and
  Proposition~\ref{gradedzero}, it follows that
  $\mathrm{Jac}(\mathfrak{A}_G) = 0$ in both cases.  From the proof of
  Corollary~\ref{C4.4.3}, we see that this ensures the existence of a
  maximal right ideal $M$, with null core, which gives us a faithful
  irreducible representation of $\mathfrak{A}_G$ on the $F$-space
  $\mathfrak{A}_G/M$.
\end{proof}

\subsection{Irreducible representations}

We prove that when a branch generalised multi-edge
spinal group has a non-trivial irreducible representation, then it has
infinitely many.  The proof is similar to but somewhat conciser than
that of~\cite[Theorem 2.2]{PassmanTemple}.

\begin{theorem}
  Let $G \in \mathscr{C}_\mathrm{reg}$ and let
  $F$ be an algebraically closed field of characteristic~$p$.  If the
  group algebra $F[G]$ has at least one non-trivial irreducible module
  $M$, then $F[G]$ has
  infinitely many such irreducible modules.
\end{theorem}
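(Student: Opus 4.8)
The plan is to manufacture, out of any non-trivial irreducible $F[G]$-module, a strictly larger one, and then to iterate. Throughout I exploit that $G\in\mathscr{C}_{\mathrm{reg}}$ is regular branch over $K=\gamma_3(G)$ by Proposition~\ref{sym}, that $G$ is fractal, and that $H:=\mathrm{Stab}_G(1)$ is normal of index $p$ in $G$ with all coordinate projections $\varphi_i\colon H\to G$ surjective. The crucial preliminary observation is that $G/\gamma_3(G)$ is a finite $p$-group: since $\mathrm{char}\,F=p$, its only irreducible module is the trivial one, so \emph{no} non-trivial irreducible $F[G]$-module can factor through $G/\gamma_3(G)$. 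Equivalently, every non-trivial irreducible $M$ satisfies $\gamma_3(G)\not\subseteq\ker_G(M)$; and as $\gamma_3(G)\subseteq L_1^G=\varphi_1(\mathrm{Rstab}_G(u_1))$, the lower companion group acts non-trivially on $M$.

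First I would fix a non-trivial irreducible $M$ and form the pullback $\varphi_1^*M$, an irreducible $H$-module of the same $F$-dimension; inducing gives $W:=\mathrm{Ind}_H^G(\varphi_1^*M)$ of dimension $p\cdot\dim_F M$. The heart of the proof is to show that $W$ is irreducible. Since $[G:H]=p$ is prime and $H\trianglelefteq G$, Clifford theory reduces this to checking that $\varphi_1^*M$ is not $\langle a\rangle$-invariant. Conjugation by $a$ cyclically permutes the coordinates, so the $a$-conjugate of $\varphi_1^*M$ is $\varphi_2^*M$; and any $H$-isomorphism $\varphi_1^*M\cong\varphi_2^*M$ would, on evaluating at $h\in\mathrm{Rstab}_G(u_1)$ (where $\varphi_2(h)=1$ while $\varphi_1(h)$ ranges over $L_1^G$), force $L_1^G$ to act trivially on $M$, which is excluded by the preliminary observation. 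Hence $\varphi_1^*M\not\cong\varphi_2^*M$, and $W$ is irreducible with $\dim_F W=p\dim_F M>\dim_F M$.

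Next I would iterate. Since $W$ is again a non-trivial irreducible (its dimension is at least $p\ge 3$), the preliminary observation applies verbatim, so the construction repeats and yields irreducibles of dimensions $\dim_F M,\,p\dim_F M,\,p^2\dim_F M,\ldots$, pairwise non-isomorphic by their dimensions. This already settles the case of a finite-dimensional $M$. To cover the general case I would invoke the standing assumption that $G$ is just infinite: then $\ker_G(M)$ is either of finite index---whence $M$ factors through a finite quotient and is finite-dimensional, so the above applies---or trivial, in which case $M$ is faithful and necessarily infinite-dimensional. In this remaining purely infinite-dimensional situation the dimension no longer separates the iterates, and I would instead perform the induction from the deeper stabilisers $\mathrm{Stab}_G(n)$ and distinguish the resulting faithful irreducibles through their restrictions to the rigid vertex stabilisers $\mathrm{Rstab}_G(n)$, which record at precisely which depth the module $M$ has been inserted.

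I expect the main obstacle to be exactly the irreducibility of the induced module, that is, verifying that the $a$-conjugates $\varphi_1^*M$ and $\varphi_2^*M$ are non-isomorphic. This is where the branch structure is decisive: the non-isomorphism is equivalent to non-triviality of the $L_1^G$-action on $M$, and the clean resolution is the remark that $G/\gamma_3(G)$ is a finite $p$-group, which in characteristic $p$ has no non-trivial irreducible module. A secondary difficulty is the bookkeeping in the faithful infinite-dimensional case, where the role played by the dimension count must be taken over by the insertion depth detected on $\mathrm{Rstab}_G(n)$.
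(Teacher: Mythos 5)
There is a genuine gap. Your induction step is fine as far as it goes: pulling back $M$ along $\varphi_1$ to $\mathrm{Stab}_G(1)$, inducing up, and proving irreducibility of the induced module by showing that $\varphi_1^*M\not\cong\varphi_2^*M$ via the non-trivial action of $L_1^G\supseteq\gamma_3(G)$ is a correct argument (and the preliminary observation that no non-trivial irreducible factors through the finite $p$-group $G/\gamma_3(G)$ is exactly the right starting point; the paper uses the same observation to extract a non-trivial irreducible $F[\gamma_3(G)]$-module from $M$ by Clifford theory). The problem is that your method of \emph{distinguishing} the iterates is a dimension count, and this only covers finite-dimensional $M$. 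For the motivating examples this case is essentially empty: if $G$ is torsion, a finite-dimensional irreducible over the algebraically closed field $F$ of characteristic $p$ has image in some $\mathrm{GL}_n(\mathbb{F}_q)$, hence factors through a finite quotient, which is a finite $p$-group, forcing $M$ trivial. So every non-trivial irreducible is infinite-dimensional, and that is precisely the case you leave as a sketch ("perform the induction from deeper stabilisers and distinguish the modules by their restrictions to $\mathrm{Rstab}_G(n)$"). Making that sketch precise is the actual content of the theorem, not a bookkeeping afterthought. A further, smaller, issue: you invoke "the standing assumption that $G$ is just infinite", but the hypothesis is only $G\in\mathscr{C}_{\mathrm{reg}}$ (equivalently, branch); just-infiniteness is not assumed and not needed for the finite/infinite-dimensional dichotomy anyway.

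The paper resolves the distinguishing problem without any dimension count. Setting $H=\gamma_3(G)$ and using that $G$ is regular branch over $H$, one has for each $m$ a subgroup $L_m\le\mathrm{Stab}_H(m)$ with $L_m\cong H\times\overset{p^m}{\ldots}\times H$. Starting from a non-trivial irreducible $F[H]$-module $V$ (obtained from $M$ by Clifford restriction), one forms the $p^m+1$ irreducible $F[L_m]$-modules $Y_j=V^{\otimes j}\otimes F_0^{\otimes(p^m-j)}$. Since $G$ merely permutes the $p^m$ coordinate factors and $V\not\cong F_0$, the $Y_j$ lie in distinct $H$-orbits, so irreducible constituents of the induced modules $Y_j^{H}$ are pairwise non-isomorphic; letting $m\to\infty$ gives infinitely many irreducible $F[H]$-modules, and a lemma of Passman--Temple transfers this to $F[G]$. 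If you want to salvage your approach, you would need an analogous invariant (in effect, the isotypic decomposition of the restriction to a deep rigid level stabiliser) to separate your infinite-dimensional iterates; worked out carefully, that is essentially the paper's orbit argument.
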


\begin{proof}
  For notational convenience, set $H =\gamma_3(G)$. Furthermore set
  $L =\gamma_3(\mathrm{Stab}_G(1))$, the normal finite-index subgroup of
  $H$ with $L\cong H\times \overset{p} {\ldots} \times H$, by
  Proposition~\ref{sym}.  If $V_1,\ldots, V_p$ are irreducible
  $F[H]$-modules, then $V_1 \otimes \ldots \otimes V_p$ is an
  irreducible $F[L]$-module; see~\cite[Proposition
  8.4.2]{Robinson}. Let $F_0$ denote the trivial
  $1$-dimensional module, for both $F[G]$ and $F[H]$.
 
  We consider $F[H]$.  Let $U$ be the given non-trivial irreducible
  $F[G]$-module. Its restriction $U_H= U_1' \oplus \ldots \oplus U_t'$
  is a finite direct sum of irreducible $F[H]$-modules, by Clifford
  theory.  If all $U_i'\cong F_0$, then $U$ is an irreducible
  $F[G/H]$-module, as $H$ acts trivially on $U$.  But $G/H$ is a
  finite $p$-group and as the characteristic of $F$ is $p$, this
  implies that $U=F_0$, a contradiction. Therefore $F[H]$ has a
  non-trivial irreducible module~$V$, and by~\cite[Lemma
  2.1]{PassmanTemple} it suffices to show that $H$ has infinitely many
  such.
 
  By the branching property of $H$, we have, for all
    $m\in \mathbb{N}$, a subgroup $L_m \le \text{Stab}_H(m)$ which is
    isomorphic to $H\times \overset{p^m}{\ldots} \times H$ under
    $\psi_m$. We write
    $L_m = L_{m,1}\times \ldots \times L_{m,p^m}$, where
    $L_{m,i} \cong H$ for $1\le i\le p^m$.  The group $G$ permutes
  these factors by conjugation.  By viewing $V$ as a module for each
  $F[L_{m,i}]$, we can consider the $p^m+1$ irreducible
  $F[L_m]$-modules
 \[
 Y_j=V\otimes \overset{j}{\ldots}\otimes V \otimes F_0 \otimes
 \overset{p^m-j}{\ldots}\otimes F_0,
 \]
 where $j \in \{0,\ldots, p^m\}$.  Since $V\not \cong F_0$ and $G$ permutes the
 factors $L_{m,i}$ of $L_m$, it follows that $Y_0,\ldots, Y_{p^m}$ are
 in distinct orbits under the action of $G$.

 For each $j$, let $Z_j$ be an irreducible constituent of
 $Y_j^H$. Since the $Y_j$ are in distinct $H$-orbits, it follows that
 $Z_0,\ldots, Z_{p^m}$ are distinct irreducible $F[H]$-modules.  As
 this is true for all $m\in \mathbb{N}$, the result follows with the
 aid of~\cite[Lemma~2.1]{PassmanTemple}.
\end{proof}


\subsection*{Acknowledgement} The second
  author was funded by the Alexander von Humboldt Foundation. We thank Amaia Zugadi-Reizabal for
helpful remarks, e.g., concerning the generalised Gupta--Sidki groups.
We are grateful to Alejandra Garrido for discussions on the EGS groups
and for pointing out a lack of detail in the proof
of~\cite[Proposition~5.2]{Paper}.  Finally we thank the anonymous
referee for their detailed comments which led to
significant improvements in the exposition of our paper.


\end{document}